\numberwithin{equation}{section}
\newcommand{\dis}{\displaystyle}
\newcommand{\R}{\mathbb{R}}
\newtheorem{theorem}{Theorem}[section]
\newtheorem{lemma}[theorem]{Lemma}
\newtheorem{proposition}[theorem]{Proposition}
\newtheorem{remark}[theorem]{Remark}
\def\v{\varepsilon}
\def\g{\gamma}
\def\d{\delta}
\def\s{\sigma}
\def\f{\frac}
\def\b{\bar}
\newcommand{\dd}{{\rm d}}
\renewcommand{\S}{\mathbb{S}}
\newcommand{\Fi}{\mathbf{1}}
\newcommand{\CD}{\mathcal {D}}
\newcommand{\ga}{\gamma}
\newcommand{\om}{\omega}
\newcommand{\la}{\lambda}
\newcommand{\de}{\delta}
\newcommand{\pa}{\partial}
\newcommand{\eps}{\epsilon}
\newcommand{\Ga}{\Gamma}
\newcommand{\vep}{\varepsilon}
\begin{document}

\title[Boltzmann equation with time-periodic boundary]{The Boltzmann equation with time-periodic boundary temperature}

\author[R.-J. Duan]{Renjun Duan}
\address[R.-J. Duan]{Department of Mathematics, The Chinese University of Hong Kong, Hong Kong}
\email{rjduan@math.cuhk.edu.hk}

\author[Y. Wang]{Yong Wang}
\address[Y. Wang]{Institute of Applied Mathematics, Academy of Mathematics and Systems Science, Chinese Academy of Sciences, Beijing 100190, China, and University of Chinese Academy of Sciences}
\email{yongwang@amss.ac.cn}

\author[Z. Zhang]{Zhu Zhang}
\address[Z. Zhang]{Department of Mathematics, The Chinese University of Hong Kong, Hong Kong}
\email{zzhang@math.cuhk.edu.hk}

\begin{abstract}
This paper is concerned with the boundary-value problem on the Boltzmann equation in bounded domains with diffuse-reflection boundary where the boundary temperature is time-periodic. We establish the existence of time-periodic solutions with the same period for both hard and soft potentials, provided that the time-periodic  boundary temperature is sufficiently close to a stationary one which has small variations around a positive constant. The dynamical stability of time-periodic profiles is also proved under small perturbations, and this in turn yields the non-negativity of the profile. For the proof, we develop new estimates in the time-periodic setting. \\
\begin{center}\bf
This paper is dedicated to Professor  Philippe G. Ciarlet on the occasion of his 80th birthday
\end{center}
\end{abstract}

\subjclass[2010]{35Q20, 35B20, 35B35, 35B45}

\keywords{Boltzmann equation, time-periodic boundary, time-periodic solutions, existence, dynamical stability, a priori estimates}
\maketitle

\tableofcontents

\thispagestyle{empty}
\section{Introduction}
Let a rarefied gas be contained in a  bounded domain $\Omega \subset \mathbb{R}^3$ with smooth boundary $\pa\Omega$ on which the diffuse-reflection condition is postulated. We assume that the velocity of the boundary is zero while the temperature of the boundary is periodic in time. One basic problem is to see whether or not there exists a time-periodic motion of such rarefied gas with the same period.

To treat the problem, we assume that the motion of the rarefied gas is governed by the Boltzmann equation
\begin{align}\label{1.1}
\pa_tF+v\cdot\nabla_xF=Q(F,F),\quad t\in\mathbb{R},\ x\in \Omega,\ v\in\mathbb{R}^3.
\end{align}
Here $F=F(t,x,v)\geq 0$ stands for the density distribution function of gas particles with position $x\in \Omega$ and velocity $v\in\R^3$ at time $t\in \R$.
The Boltzmann collision operator $Q(\cdot,\cdot)$ is of the non-symmetric bilinear form:
\begin{align*}
Q(G,F)=&\int_{\mathbb{R}^3}\int_{\mathbb{S}^2} B(v-u,\omega)G(u')F(v')\,\dd\omega\dd u\notag\\
&-\int_{\mathbb{R}^3}\int_{\mathbb{S}^2} B(v-u,\omega)G(u)F(v)\,\dd\omega\dd u.
\end{align*}
Here the relation between the velocity pair $(v',u')$ after collision  with the velocity pair $(v,u)$ before collision for two particles is given by
\begin{equation*}
v' 
=v-[(v-u)\cdot\omega]\omega,\quad 
 u' 
 =u+[(v-u)\cdot\omega]\omega, 
\end{equation*}
with $\om\in \S^2$, satisfying the conservations of momentum and energy due to the elastic collision:
\begin{equation*}
v'+u'=v+u,\quad |v'|^2+|u'|^2=|v|^2+|u|^2.
\end{equation*}
The Boltzmann collision kernel $B(v-u,\om)$ takes the form of
\begin{equation*}
B(v-u,\om)=|v-u|^{\gamma}b(\phi),
\end{equation*}
with
\begin{equation*}
-3<\ga\leq 1,\quad 0\leq b(\phi)\leq C|\cos\phi|,\quad \cos\phi:=\frac{(v-u)\cdot \om}{|v-u|},
\end{equation*}
for a generic constant $C$. Note that the angular cutoff assumption is required and we allow for both hard and soft potentials in the full range.

To solve the Boltzmann equation \eqref{1.1} in the bounded domain, it is supplemented with the following diffuse-reflection boundary condition:
\begin{align}\label{1.2}
F(t,x,v)\big|_{v\cdot n(x)<0}=\mu_{\theta}\int_{u\cdot n(x)>0}F(t,x,u)|u\cdot n(x)|\,\dd u,
\end{align}
for any $t\in \R$, where $n(x)$ denotes the outward normal vector at the boundary point $x\in \pa\Omega$, and $\mu_\theta$ takes the form of
\begin{equation}
\label{def.muth}
\mu_{\theta}:=\mu_{\theta(t,x)}(v)=\f{1}{2\pi\theta^2(t,x)}e^{-\f{|v|^2}{2\theta(t,x)}}.
\end{equation}
Here we have assumed that the boundary velocity is zero and the boundary temperature is a function $\theta(t,x)$ which is periodic in time and may also depend on the space variable.

Throughout this paper, we assume that $\Omega =\{x:\xi (x)<0\}\ $is connected and
bounded with $\xi (x)$ being a smooth function in $\R^3$. We assume $\nabla \xi
(x)\neq 0$ at each boundary point $x$ with $\xi (x)=0$. The outward normal vector $n(x)$ is therefore given by
$n(x)=\nabla \xi (x)/|\nabla \xi (x)|$,  
and it can be extended smoothly near $\partial \Omega =\{x:\xi (x)=0\}.$
We define that $
\Omega $ is convex if there exists a constant $c_{\xi }>0$ such that
\begin{equation*}
\sum_{i,j=1}^3\frac{\pa^2\xi}{\pa x_i\pa x_j} (x)\zeta_{i}\zeta_{j}\geq c_{\xi }|\zeta |^{2}
\end{equation*}
for all $x$ such that $\xi (x)\leq 0$ and for all $\zeta=(\zeta_1,\zeta_2,\zeta_3) \in \mathbb{R}^{3}$.
We denote the phase boundary in the space $\Omega \times \mathbb{R}^{3}$ as $
\gamma =\partial \Omega \times \mathbb{R}^{3}$, and split it into the outgoing
boundary $\gamma _{+}$, the incoming boundary $\gamma _{-}$,  and the
singular boundary $\gamma _{0}$ for grazing velocities, respectively:
\begin{align}
\gamma _{+} &=\{(x,v)\in \partial \Omega \times \mathbb{R}^{3}:
n(x)\cdot v>0\}, \nonumber\\
\gamma _{-} &=\{(x,v)\in \partial \Omega \times \mathbb{R}^{3}:
n(x)\cdot v<0\}, \nonumber\\
\gamma _{0} &=\{(x,v)\in \partial \Omega \times \mathbb{R}^{3}:
n(x)\cdot v=0\}.\nonumber
\end{align}

Note that $\mu_{\theta}$ satisfies the boundary condition \eqref{1.2} but may not be a solution to the Boltzmann equation \eqref{1.1} since the boundary temperature $\theta(t,x)$ may have nontrivial variations in $t$ or $x$. When $\theta(t,x)$ is identical to a constant $\theta_0>0$, for instance, without loss of generality we assume $\theta_0=1$ to the end, the global Maxwellian corresponding to \eqref{def.muth} is reduced to
\begin{equation}
\label{def.gm}
\mu=\mu(v):=\f{1}{2\pi}e^{-\f{|v|^2}{2}},
\end{equation}
which satisfies both \eqref{1.1} and \eqref{1.2}.  In such case, there have been extensive studies of existence, large-time behavior and regularity of small-amplitude $L^\infty$ solution around $\mu$ to the initial-boundary value problem on the Boltzmann equation, for instance, \cite{EGKM,EGKM-18,EGM,Guo2, GKTT-IM, K,LYa}. Readers may also refer to references therein for related works.


When $\theta(t,x)$ is a time-independent function $\b{\theta}(x)$ which has a small variation around $\theta_0$, namely, $\sup_{\pa \Omega}|\bar{\theta}-
{\theta_0}|$ is small enough, one may expect that the large-time behavior of solutions to the initial-boundary value problem on the Boltzmann equation is determined by solutions to
the following steady problem
\begin{equation}\label{sp}
\left\{
\begin{aligned}
&v\cdot\nabla_xF=Q(F,F), \quad x\in \Omega,\ v\in \mathbb{R}^3,\\
&F(x,v)\big|_{v\cdot n(x)<0}=\mu_{\b{\theta}(x)}\int_{u\cdot n(x)>0}F(x,u)|u\cdot n(x)|\,\dd u.
\end{aligned}\right.
\end{equation}
Indeed, for hard potentials $0\leq \ga\leq 1$, \cite{EGKM}  established the existence and dynamical stability of a stationary solution 
${F^*(x,v)}$
to \eqref{sp}. Recently, the result of \cite{EGKM}  has been extended in \cite{DHWZ} to the case of soft potentials $-3<\ga<0$. We refer readers to \cite{DHWZ} for extensive discussions on the subject.


In the current work, we consider the case when $\theta(t,x)$ is a general time-space-dependent function  assumed to be periodic in time with period $T>0$ and sufficiently close to $\bar{\theta}(x)$. Under such situation, we shall prove that there exists a unique time-periodic solution $F^{per}(t,x,v)$ around {$F^*(x,v)$ }
with the same period $T$ for the problem \eqref{1.1} and \eqref{1.2}, and further show the dynamical stability of $F^{per}(t,x,v)$ under small perturbations in the sense that the solution $F(t,x,v)$ to the initial-boundary value problem on the Boltzmann equation \eqref{1.1} with initial data
$F(0,x,v)=F_0(x,v)$
and boundary data \eqref{1.2} exists globally in time and is time-asymptotically close to  $F^{per}(t,x,v)$ whenever $F_0(x,v)$ is sufficiently close to $F^{per}(0,x,v)$. Note that the limiting situation $T=0$ for the period of $\theta(t,x)$ is also allowed and this corresponds to the stationary case considered in \cite{EGKM}  and \cite{DHWZ}  as mentioned above. Therefore, the current work can be regarded as an extension of \cite{EGKM,DHWZ} to the time-periodic boundary.


In what follows we state the main results of this paper. Let
\begin{equation}
\label{def.wnot}
w_{q,\beta}(v):=(1+|v|^2)^{\f{\beta}{2}}e^{q|v|^2}
\end{equation}
be the velocity weight function, and let 
${F^*(x,v)}$ be the steady solution to \eqref{sp} corresponding to the stationary boundary temperature $\bar{\theta}(x)$ constructed in \cite{EGKM,DHWZ}. We assume that $F^*(x,v)$ has the same total mass as that of the global Maxwellian $\mu$ in \eqref{def.gm}, i.e.,
$$
\int_{\Omega}\int_{\R^3} [F^\ast(x,v)-\mu(v)]\,\dd v\dd x=0.
$$
To the end, for brevity we shall write $w_{q,\beta}$ as $w$ by ignoring the dependence of $w$ on parameters $q$ and $\beta$. The first result is concerned with the existence of time-periodic solutions of small amplitude.

\begin{theorem}\label{thm1.1}
Let $-3<\gamma\leq 1, 0\leq q<\f18$ and $\beta>\max\{3,3-\gamma\}$. Assume that $\theta(t,x)$ is a time-periodic function with period $T>0$. Then there exist $\delta>0$ and $C>0$ such that if
\begin{equation*}
\de_1:=\sup_{0\leq t\leq T}|\theta(t,\cdot)-\b{\theta}(\cdot)|_{L^\infty(\partial\Omega)}
\leq \delta,\quad \de_2:=|\b{\theta}(\cdot)-1|_{L^\infty(\partial\Omega)}
\leq  \delta,
\end{equation*}
then the Boltzmann equation \eqref{1.1} with the diffuse-reflection boundary \eqref{1.2} admits a unique nonnegative time-periodic solution with the same period $T$:
\begin{align}\label{1.4}
F^{per}(t,x,v)={F^{*}(x,v)}
+\sqrt{\mu(v)}f^{per}(t,x,v)\geq 0,
\end{align}
satisfying
\begin{equation}
\label{add.thmcon}
\int_\Omega\int_{\mathbb{R}^3}f^{per}(t,x,v)\sqrt{\mu(v)}\,\dd v\dd x=0,\quad t\in \R,
\end{equation}
and
\begin{align}\label{1.5}
\sup_{0\leq t\leq T}\|w f^{per}(t)\|_{L^\infty}+\sup_{0\leq t\leq T}|w f^{per}(t)|_{L^\infty(\g)}\leq C\d_1.
\end{align}
Moreover, if $\Omega$ is convex, $\theta(t,x)$ is continuous on $\mathbb{R}\times\partial\Omega$, and $\b{\theta}(x)$ is continuous on $\partial\Omega$, then $F^{per}(t,x,v)$ is also continuous away from the grazing set $\mathbb{R}\times \g_0$.
\end{theorem}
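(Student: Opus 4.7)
My plan is to linearize around the stationary profile $F^{*}$, solve the resulting linear time-periodic problem via a Poincar\'e-map argument, and then close the nonlinear problem by contraction. Writing $F^{per}=F^{*}+\sqrt{\mu}\,f^{per}$ and subtracting the stationary equation satisfied by $F^{*}$, I obtain an evolution equation
\begin{equation*}
\pa_t f^{per}+v\cdot\na_x f^{per}+L^{*} f^{per}=\Ga^{*}(f^{per},f^{per}),
\end{equation*}
where $L^{*}$ is the linearization about $F^{*}$, a small perturbation (of size $\de_2$) of the standard linearized operator $L$ about $\mu$, and $\Ga^{*}$ is the corresponding bilinear term. Subtracting the diffuse reflection identity satisfied by $F^{*}$ at temperature $\bar\theta$ from \eqref{1.2} puts the boundary condition in the form
\begin{equation*}
f^{per}\big|_{v\cdot n(x)<0}=P^{\bar\theta}_{\g} f^{per}+r(t,x,v),
\end{equation*}
with $P^{\bar\theta}_{\g}$ the diffuse reflection projection at temperature $\bar\theta$ and an inhomogeneous boundary source $r$ of size $O(\de_1)$, generated by $(\mu_{\theta}-\mu_{\bar\theta})\int(F^{*}+\sqrt{\mu}f^{per})|u\cdot n|\,\dd u$.

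For the inhomogeneous linear periodic problem, I would construct a solution by the Poincar\'e-map method: letting $f(t;f_0)$ solve the initial-boundary value problem with datum $f_0$, set $\Phi(f_0):=f(T;f_0)$, so that a fixed point $f_0^{*}$ of $\Phi$ yields a $T$-periodic solution. The key ingredient is a decay estimate for the linearized semigroup in $\|w\cdot\|_{L^\infty}$ together with its boundary-trace counterpart. I would derive it by the $L^{2}$--$L^{\infty}$ interplay of \cite{Guo2}, extended to soft potentials as in \cite{DHWZ}: an $L^{2}$ decay from positivity of $L$ on the orthogonal complement of $\mathrm{Span}\{\sqrt{\mu},v\sqrt{\mu},|v|^{2}\sqrt{\mu}\}$, upgraded to $L^{\infty}$ via the stochastic characteristic iteration. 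The zero-mass constraint \eqref{add.thmcon} is preserved along the flow because diffuse reflection (with either $\mu_{\theta}$ or $\mu_{\bar\theta}$) is mass-conserving slice by slice and $\int_{\R^{3}}(\mu_{\theta}-\mu_{\bar\theta})\,\dd v=0$, so one can work in the macroscopically orthogonal subspace where $\Phi$ is contractive.

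With the linear theory in hand, I would define $\Psi(f)$ as the $T$-periodic solution of the linear problem with interior source $\Ga^{*}(f,f)$ and boundary source $r[f]$, and combine the linear estimate with the bilinear bound $\|w\Ga^{*}(f,g)\|_{L^{\infty}}\lesssim \|wf\|_{L^{\infty}}\|wg\|_{L^{\infty}}$ to get
\begin{equation*}
\sup_{[0,T]}\|w\Psi(f)(t)\|_{L^{\infty}}\le C\de_1+C\sup_{[0,T]}\|wf(t)\|_{L^{\infty}}^{2},
\end{equation*}
so that $\Psi$ stabilizes and contracts the ball of radius $\sim\de_1$ once $\de_1,\de_2$ are small; its unique fixed point is $f^{per}$, which by construction satisfies \eqref{1.5}. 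The \emph{main obstacle} I foresee is the linear step in the soft-potential range $-3<\g<0$: there is no spectral gap, and the purely periodic framework deprives us of the helpful initial layer used in the IVP. One must show that $\Phi$ is a strict contraction on the macroscopically orthogonal subspace uniformly in $T$, which requires delicate use of the exponential weight $e^{q|v|^{2}}$ in $w_{q,\beta}$ and careful control of the trace $|wf^{per}|_{L^{\infty}(\g)}$, adapting the stationary $L^{2}$--$L^{\infty}$ machinery of \cite{EGKM,DHWZ} so as to accommodate a nonzero periodic source rather than a source decaying to a steady one.

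For the qualitative conclusions, the nonnegativity of $F^{per}$ does not follow from the fixed-point argument directly; I would deduce it a posteriori from the dynamical stability theorem proved subsequently, since a nonnegative initial datum close to $F^{per}(0,\cdot)$ produces a globally defined nonnegative solution of the IVP that converges to $F^{per}$ as $t\to\infty$, forcing $F^{per}\ge 0$. Finally, when $\Omega$ is convex and $\theta,\bar\theta$ are continuous, continuity of $F^{per}$ away from $\R\times\g_{0}$ follows by the backward-characteristics argument of \cite{Guo2}, performed in the $T$-periodic variable $t$ modulo $T$, using that convexity controls the time spent near the grazing set along each trajectory.
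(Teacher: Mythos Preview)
Your nonlinear contraction, the deferral of nonnegativity to the stability theorem, and the continuity argument all match the paper. The linear step, however, takes a genuinely different route. You propose a Poincar\'e-map construction $\Phi(f_0)=f(T;f_0)$ based on IVP semigroup decay; the paper explicitly avoids this (see the discussion of \cite{U,DUYZ,DL} in the introduction) and instead treats the linear periodic problem directly as a boundary-value problem on $[0,T]\times\Omega\times\R^3$, with periodicity imposed as the boundary condition in time. Concretely, the paper adds an artificial damping $\varepsilon>0$ and a boundary penalization $(1-\tfrac1j)P_\gamma$, solves by characteristics, lets $j\to\infty$, bootstraps a parameter $\lambda\in[0,1]$ in front of $K$, and finally lets $\varepsilon\to0$; the periodic $L^2$ energy estimates on $[0,T]$ close because the time-boundary terms at $t=0,T$ cancel, the macroscopic part is controlled by evaluating the functional $G_f$ of \cite{EGKM} between $0$ and $T$, and the zero-mass condition is forced by the ODE $\dot\rho^\varepsilon+\varepsilon\rho^\varepsilon=0$ for periodic $\rho^\varepsilon$. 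A minor difference in setup: you linearize around $F^*$ (operator $L^*$, projection $P_\gamma^{\bar\theta}$), whereas the paper linearizes around $\mu$ and pushes $L_{\sqrt\mu f^*}$ and $(\mu_\theta-\mu)/\sqrt\mu$ into the source evaluated at the previous iterate. Your approach is conceptually cleaner but requires the full IVP decay (essentially Proposition~\ref{prop4.1}, extended to allow an inhomogeneous boundary source $r$) as a prerequisite, plus the observation that some iterate $\Phi^n$ contracts even in the sub-exponential soft-potential regime; the paper's BVP approach sidesteps large-time decay entirely for the existence step and handles $-3<\gamma\le1$ uniformly by working on a single period.
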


The second result is concerned with the large-time behavior of solutions to the initial-boundary value problem
\begin{equation}
\label{ibvp}
\left\{\begin{aligned}
&\pa_tF+v\cdot \nabla_xF=Q(F,F),\quad t>0,\ x\in\Omega,\ v\in \mathbb{R}^3,\\
&F(t,x,v)\big|_{v\cdot n(x)<0}=\mu_{\theta(t,x)}\int_{u\cdot n(x)>0}F(t,x,u)|u\cdot n(x)|\,\dd u,\\
&F(0,x,v)=F_0(x,v),
\end{aligned}\right.
\end{equation}
whenever $F_0(x,v)$ is around $F^{per}(0,x,v)$ in a sense to be clarified later on.
%

\begin{theorem}\label{thm1.2}
Let $-3<\gamma\leq1$, $0<q<\f18$ and $\beta>\max\{3,3-\gamma\}$. Then there exist constants $\delta'$, $c>0$, $\vep_0>0$ and $C>0$ such that if
$$
\sup_{0\leq t\leq T}|\theta(t,\cdot)-1|_{L^{\infty}(\pa\Omega)}\leq\delta',
$$
and
$F_0(x,v)=F^{per}(0,x,v)+\sqrt{\mu(v)}f_0(x,v)\geq 0$ satisfies
\begin{align}\label{1.7}
\int_{\Omega}\int_{\mathbb{R}^3}f_{0}(x,v)\sqrt{\mu(v)}\,\dd v\dd x=0,
\end{align}
and
\begin{align*}
\|w 
{f_0}\|_{L^{\infty}}\leq \vep_0,
\end{align*}
then the initial-boundary value problem \eqref{ibvp} on the Boltzmann equation
admits a unique global-in-time solution
$$
F(t,x,v)=F^{per}(t,x,v)+\sqrt{\mu(v)}f(t,x,v)\geq 0,\quad t\geq 0,x\in \Omega,v\in\R^3,
$$
satisfying
\begin{equation*}
\int_{\Omega}\int_{\mathbb{R}^3}f(t,x,v)\sqrt{\mu(v)}\,\dd v\dd x=0
\end{equation*}
and
\begin{equation}\label{1.9}
\left\|w
{f}(t)\right\|_{L^{\infty}}+\left|w
{f}(t)\right|_{L^{\infty}(\g)} 
\leq  C e^{-c t^{\rho}}\|{w}
{f_0}\|_{L^{\infty}},
\end{equation}
for all $t\geq 0$, where $\rho>0$ is determined by
\begin{equation}
\label{def.rho}
\rho=\left\{\begin{aligned}
&1\qquad\qquad\qquad\quad\ \text{if}\quad \ga\in [0,1],\\
&{\frac{{2}
}{2+|\gamma|}}
\in (0,1)\quad  \text{if}\quad \ga\in (-3,0).
\end{aligned}\right.
\end{equation}
Moreover, if $\Omega$ is convex, $F_0(x,v)$ is continuous except on $\g_0$ satisfying
\begin{align*}
F_0(x,v)|_{\gamma_-}=\mu_{\theta}(0,x,v) \int_{u\cdot n(x)>0} F_0(x,u) |u\cdot n(x)| \,\dd u,
\end{align*}
and $\theta(t,x) $ is continuous over $\mathbb{R}\times\partial \Omega$, then the solution $F(t,x,v)$ is also continuous in $[0,\infty)\times \{\bar{\Omega}\times \mathbb{R}^{3}\setminus\g_0\}$.
\end{theorem}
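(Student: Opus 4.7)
The plan is to linearize the Boltzmann equation around the time-periodic profile $F^{per}$ and run an $L^2$--$L^\infty$ bootstrap in the spirit of Guo. Writing $F=F^{per}+\sqrt{\mu}\,f$ and subtracting the equation for $F^{per}$, the perturbation $f$ satisfies
\begin{equation*}
\partial_t f+v\cdot\nabla_x f+\mathcal{L}f=\Gamma(f,f)+\mathcal{K}^{per}(t)f,\quad t>0,\ x\in\Omega,\ v\in\mathbb{R}^3,
\end{equation*}
where $\mathcal{L}$ and $\Gamma$ are the usual linearized and bilinear collision operators around $\mu$, and $\mathcal{K}^{per}(t)f=\mu^{-1/2}[Q(F^{per}-\mu,\sqrt{\mu}f)+Q(\sqrt{\mu}f,F^{per}-\mu)]$ is a $T$-periodic linear operator of size $O(\delta'+\delta_1+\delta_2)$, hence small. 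The diffuse-reflection condition for $f$ takes the form $f|_{\gamma_-}=P_\gamma f+$ small remainder arising from $\mu_{\theta(t,x)}-\mu$, and the assumption \eqref{1.7} combined with \eqref{add.thmcon} gives $\int_\Omega\int_{\mathbb{R}^3} f\sqrt{\mu}\,\dd v\dd x=0$ for all $t\geq 0$, which is the zero-mass orthogonality needed for the coercivity of $\mathcal{L}$ modulo its hydrodynamic null space.

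The first step is an $L^2$ decay estimate for this perturbed problem. Coercivity $\langle\mathcal{L}f,f\rangle\gtrsim\|(\mathbf{I}-\mathbf{P})f\|_\nu^2$ controls the microscopic part, and the macroscopic component $\mathbf{P}f$ is recovered by a Korn--Poincar\'e type macroscopic estimate exploiting the zero-mass constraint and the diffuse boundary; the small operator $\mathcal{K}^{per}(t)$ and the boundary defect are absorbed into the dissipation. For hard potentials this yields $\|f(t)\|_{L^2}\leq Ce^{-ct}\|f_0\|_{L^2}$ by a direct Gronwall argument. For soft potentials the collision frequency $\nu(v)\sim(1+|v|)^\gamma$ degenerates at large velocities, so the $L^2$ dissipation must be interpolated against the weighted $L^\infty$ bound through a cut-off at $|v|\leq R$; optimizing in $R$ produces precisely the rate $e^{-ct^\rho}$ in \eqref{def.rho} with $\rho=2/(2+|\gamma|)$. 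The $L^2$ decay is then upgraded to $L^\infty$ via the Duhamel/stochastic-cycle representation along backward characteristics: iterating the diffuse-reflection boundary operator yields a small gain on cycles avoiding grazing, and the gain part of $\mathcal{K}$ is bounded by a compact velocity integral that brings in the previously obtained $L^2$ decay.

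The nonlinear closure is a standard continuation argument based on the smallness $\|w f_0\|_{L^\infty}\leq\vep_0$ and the quadratic nature of $\Gamma(f,f)$. Positivity of $F$ follows from the positivity of $F^{per}$ (Theorem \ref{thm1.1}) and a standard monotone-iteration/maximum-principle argument, while continuity away from $\gamma_0$ is propagated along characteristics in a convex domain using continuity of $\theta$, of $F_0$, and the imposed compatibility condition on $\gamma_-$. In my view the principal technical obstacle is the time-periodic nature of the background: the linear operator $\mathcal{L}+\mathcal{K}^{per}(t)$ has $t$-dependent coefficients, so neither stationary spectral theory nor the classical weak-limit/contradiction argument for $L^2$ decay around a steady state applies directly; one has to replace it by an argument integrated over a single period, combined with the uniqueness part of Theorem \ref{thm1.1}, to close the $L^2$ decay. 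A secondary difficulty is that the soft-potential interpolation requires the exponential weight $e^{q|v|^2}$ to be partially absorbed into $\sqrt{\mu}$ throughout the stochastic cycles, which is why the hypothesis $q<1/8$ is essential.
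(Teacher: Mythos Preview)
Your outline is broadly correct and sits within the same Guo $L^2$--$L^\infty$ framework the paper uses, but the organization differs in one structural point, and your positivity step is circular as written.

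In the paper the term you call $\mathcal{K}^{per}(t)f$ is \emph{not} absorbed into the linear operator. The proof instead runs a Picard iteration
\[
\partial_t f^{j+1}+v\cdot\nabla_x f^{j+1}+Lf^{j+1}=-L_{\sqrt{\mu}f^{per}}f^{j}+\Gamma(f^{j},f^{j}),
\]
so that at each step one solves a linear problem with the \emph{time-independent} operator $L$ and a given source; the only genuine time-dependence in the linear problem sits in the boundary defect $(\mu_\theta-\mu)/\sqrt{\mu}$. The required linear decay (Proposition~\ref{prop4.1}) is then quoted from \cite{EGKM,DHWZ}, and the smallness $\|wf^{per}\|_{L^\infty}\lesssim\delta'$ makes the map $f^{j}\mapsto f^{j+1}$ contract in the norm $\sup_{s}e^{cs^\rho}\|wf(s)\|_{L^\infty}$. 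This completely sidesteps what you flag as the ``principal technical obstacle'' (a $t$-dependent linear operator with no stationary spectral theory). Your absorption-and-perturb strategy would also close, but it is not the paper's route and it demands the extra period-integrated argument you describe; the paper's iteration buys a cleaner reduction to already-established linear theory.

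On positivity: you invoke $F^{per}\geq 0$ from Theorem~\ref{thm1.1}, but in the paper the non-negativity of $F^{per}$ is explicitly deferred and is obtained as a \emph{consequence} of Theorem~\ref{thm1.2}, not as an input to it. The logic runs in the opposite direction: one first proves $F\geq 0$ for the initial-boundary value problem directly by the monotone-iteration scheme of \cite{EGKM} (which does not use $F^{per}\geq 0$), and then specializes to $F_0=F^{per}(0,\cdot,\cdot)$; uniqueness forces $F\equiv F^{per}$, whence $F^{per}\geq 0$. As you have written it, the argument is circular.
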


\begin{remark}
In the soft potential case $-3<\ga<0$, the time-decay estimate \eqref{1.9} implies that there is no loss of velocity weight in the weighted $L^\infty$ space for the solution compared to the one for initial data, which is different from the recent result \cite{LYa}. We refer readers to \cite{DHWZ} for more details.
\end{remark}

The issue about the time-periodic solutions to the Boltzmann equation has been studied in \cite{U} and \cite{DUYZ}. Particularly, \cite{U} first considered the case where the Boltzmann equation is driven by a time-periodic source term in the whole space. The main idea of \cite{U} is to study the extra time-decay property of the linearized solution operator $U(t)$ and look for the time-periodic solution as a fixed point to an integral equation
\begin{align*}
f(t)=\int_{-\infty}^t U(t-s) N_f(s)\,\dd s,
\end{align*}
where $N_f(\cdot)$ includes both the nonlinear term and the time-periodic inhomogeneous source. The approach of \cite{U} was later applied in \cite{DUYZ} to consider the Boltzmann equation with a  small time-periodic external force. Note that \cite{DUYZ} has to require a strong assumption that the space dimensions are not less than five, and it has remained a big open problem to remove such restriction.

 A similar time-periodic problem on the Vlasov-Poisson-Fokker-Planck system in the whole space was also considered in \cite{DL} when the background density profile is time-periodic around a positive constant, where the proof is based on another approach different from \cite{U}. It should be pointed out that three space dimensions are allowed in \cite{DL} due to the exponential time-decay structure of the linearized system.


In the current work, we carry out a proof of existence of time-periodic solutions which is different from \cite{DL,DUYZ,U} mentioned above but is similar to the one in \cite{DHWZ} for the steady problem. In fact, instead of solving the Cauchy problem, the basic idea in the present paper is to regard the time-periodic problem as a special boundary value problem over $[0,T]\times\Omega\times\mathbb{R}^3$, with the time-periodic boundary condition at $t=0$ and $t=T$. For the proof, we develop new estimates in the time-periodic setting.

In the end we remark that motivated by the works \cite{AKFG} and \cite{TA}, the existence and dynamical stability of time-periodic profiles to the Boltzmann equation in a bounded interval recently have been also established in \cite{DZ-moving} in the case when one boundary point moves with a small time-periodic velocity. Compared to the current work in the case when the boundary temperature is time-periodic, the mathematical analysis in \cite{DZ-moving} is much harder, since the reformulated problem is related to the Boltzmann equation with a time-periodic external force in the bounded domain.

 The rest of this paper is organized as follows. In Section 2, we make a list of basic lemmas which will be used in the later proof. Then, Section 3 and Section 4 are devoted to the proof of Theorem \ref{thm1.1} and Theorem \ref{thm1.2}, respectively.

\medskip
\noindent{\it Notations.}  Throughout this paper, $C$ denotes a generic positive constant which may vary from line to line.  $C_a,C_b,\cdots$ denote the generic positive constants depending on $a,~b,\cdots$, respectively, which also may vary from line to line. $A\lesssim B$ means that there exists a constant $C>0$ so that $A\leq C B$ and $A\lesssim_{a}B$ means that the constant depends on $a$.
 $\|\cdot\|_{L^2}$ denotes the standard $L^2(\Omega\times\mathbb{R}^3_v)$-norm and $\|\cdot\|_{L^\infty}$ denotes the $L^\infty(\Omega\times\mathbb{R}^3_v)$-norm. We denote $\langle\cdot,\cdot\rangle$ as the inner product in $L^2(\Omega\times \mathbb{R}^3_v)$ or $L^2(\mathbb{R}^3_v)$. Moreover, we define $\|\cdot\|_{L^{2}([0,T];L^2)}=\big\|\|\cdot\|_{L^2}\big\|_{L^{2}[0,T]}$. For the phase boundary integration, we define $d\gamma\equiv |n(x)\cdot v| dS(x)dv$, where $dS(x)$ is the surface measure and define $|f|_{L^p}^p=\int_{\gamma}|f(x,v)|^pd\gamma$ and the corresponding space is denoted as $L^p(\partial\Omega\times\mathbb{R}^3)=L^p(\partial\Omega\times\mathbb{R}^3;d\gamma)$. Furthermore, we denote $|f|_{L^p(\gamma_{\pm})}=|f\Fi_{\gamma_{\pm}}|_{L^p}$ and $|f|_{L^\infty(\gamma_{\pm})}=|f\Fi_{\gamma_{\pm}}|_{L^\infty}$. For simplicity, we denote $|f|_{L^\infty(\g)}=|f|_{L^\infty(\g_+)}+|f|_{L^\infty(\g_-)}$.

\section{Preliminaries}

Recall (cf.~\cite{Gl}) that around the global Maxwellian $\mu$ as in \eqref{def.gm}, one can write
\begin{equation*}
\frac{1}{\sqrt{\mu}}Q(\mu+\sqrt{\mu}f,\mu+\sqrt{\mu} f)=-Lf+\Ga (f,f),
\end{equation*}
where $L$ and $\Ga(\cdot,\cdot)$ are the corresponding linearized operator and nonlinear operator respectively given by
\begin{align*}
Lf=-\f1{\sqrt{\mu}}\Big\{Q(\mu,\sqrt{\mu}f)+Q(\sqrt{\mu}f,\mu)\Big\},\nonumber
\end{align*}
and
\begin{align*}
\Gamma(f,g)=\f1{\sqrt{\mu}}Q(\sqrt{\mu}f,\sqrt{\mu}g).
\end{align*}
Moreover, one has $L=\nu-K$, where the velocity multiplication $\nu=\nu(v)$ is defined by
\begin{equation*}
\nu(v)=\int_{\mathbb{R}^3}\int_{\mathbb{S}^2}B(v-u,\omega)\mu(u)\,\dd\om \dd u\sim (1+|v|)^{\gamma},
\end{equation*}
and the  integral operator $K:=K_1-K_2$ is   defined in terms of
\begin{align}
(K_1f)(v)&=\int_{\mathbb{R}^3}\int_{\mathbb{S}^2}B(v-u,\omega)\sqrt{\mu(v)\mu(u)}f(u)\,\dd\om \dd u,\nonumber
\end{align}
and
\begin{align}
(K_2f)(v)&=\int_{\mathbb{R}^3}\int_{\mathbb{S}^2}B(v-u,\omega)\sqrt{\mu(u)\mu(u')}f(v')\,\dd\om \dd u\nonumber\\
&\quad+\int_{\mathbb{R}^3}\int_{\mathbb{S}^2}B(v-u,\omega)\sqrt{\mu(u)\mu(v')}f(u')\,\dd\om \dd u.\nonumber
\end{align}

\begin{lemma}[\cite{Guo-03,Guo2}]
The operator $L$ is self-adjoint and non-negative. The kernel of $L$ is a five-dimensional space spanned by the following bases: $$
e_0=(2\pi)^{-\f14}\sqrt{\mu};\quad e_i=(2\pi)^{-\f14}v_i\sqrt{\mu},\quad i=1,2,3;\quad e_4=\f{(2\pi)^{-\f14}}{\sqrt{6}}(|v|^2-3)\sqrt{\mu}.
$$
Define the projection $P$ by
\begin{align}\label{P}
Pf=\sum_{i=0}^4\langle f,e_i\rangle e_i.
\end{align}
Then there exists a constant $c_0>0$ such that
\begin{align}\label{c}
\langle Lf,f\rangle\geq c_0|\nu^{1/2}(I-P)f|_{L^{2}(\mathbb{R}^3)}^2.
\end{align}
\end{lemma}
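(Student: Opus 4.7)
\medskip
\noindent\textbf{Proof proposal.}

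The plan is to separate the three assertions (self-adjointness, identification of $\ker L$, and the coercive bound) and treat each with the classical tools built on the pre/post-collision symmetry of $B(v-u,\omega)$.

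First I would establish self-adjointness by a direct computation of $\langle Lf,g\rangle$ in weak form. Writing out $-Lf=\mu^{-1/2}\{Q(\mu,\sqrt\mu f)+Q(\sqrt\mu f,\mu)\}$, the gain and loss parts of $Q$ can be symmetrized by the two standard changes of variables: $(v,u)\mapsto(u,v)$, which preserves $B$, and $(v,u,\omega)\mapsto(v',u',\omega)$, which is an involution of unit Jacobian with $B(v-u,\omega)=B(v'-u',\omega)$ and $\mu(v)\mu(u)=\mu(v')\mu(u')$. Applying these symmetries one rewrites
\[
\langle Lf,g\rangle=\tfrac14\int_{\R^3}\!\int_{\R^3}\!\int_{\S^2}B(v-u,\omega)\sqrt{\mu(v)\mu(u)}\,\bigl(f+f_\ast-f'-f'_\ast\bigr)\bigl(g+g_\ast-g'-g'_\ast\bigr)\,\dd\omega\dd u\dd v,
\]
where the usual shorthand $f=f(v), f_\ast=f(u)/\sqrt{\mu(u)}\cdot\sqrt{\mu(v)}$, etc., is understood (more precisely one works with $\tilde f:=f/\sqrt\mu$ so that the factor $\sqrt{\mu(v)\mu(u)}$ pulls out). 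This bilinear form is manifestly symmetric in $f$ and $g$.

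Second, setting $g=f$ in the identity above gives
\[
\langle Lf,f\rangle=\tfrac14\int\!\!\int\!\!\int B(v-u,\omega)\mu(v)\mu(u)\,\bigl(\tilde f+\tilde f_\ast-\tilde f'-\tilde f'_\ast\bigr)^2\,\dd\omega\dd u\dd v\;\geq\;0,
\]
which proves non-negativity and also characterizes $\ker L$: $Lf=0$ iff $\tilde f$ is a collision invariant, i.e.~$\tilde f(v)+\tilde f(u)=\tilde f(v')+\tilde f(u')$ for a.e.~$(v,u,\omega)$. By the classical Boltzmann theorem on collision invariants, $\tilde f$ must be an affine function of the conserved quantities $1,v,|v|^2$, so $\ker L=\mathrm{span}\{\sqrt\mu,v_i\sqrt\mu,|v|^2\sqrt\mu\}$. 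Orthonormalizing in $L^2(\R^3_v)$ (using $\int\mu=(2\pi)^{1/2}$ and the Gaussian moments of $\mu$) produces exactly the basis $\{e_0,e_1,e_2,e_3,e_4\}$ listed in the statement and identifies $P$ as the orthogonal projection onto $\ker L$.

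Third, for the coercive bound \eqref{c} I would use the $L=\nu-K$ decomposition already recorded in the preliminaries. The idea is the standard Hilbert/Grad argument: $\nu(v)\sim(1+|v|)^\gamma$ gives the ``diagonal'' lower bound $\langle\nu f,f\rangle\geq|\nu^{1/2}f|^2_{L^2}$, and one needs to dominate the perturbation $\langle Kf,f\rangle$ on the subspace $(I-P)L^2$. Since $K_1,K_2$ are integral operators whose kernels are controlled by $(1+|v-u|)^{\gamma-2}e^{-c|v-u|^2-c||v|^2-|u|^2|^2/|v-u|^2}$ (or analogous Carleman-type representations in the soft-potential case), $\nu^{-1/2}K\nu^{-1/2}$ is a compact operator on $L^2(\R^3_v)$. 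A standard contradiction argument then applies: if \eqref{c} failed there would be a sequence $f_n\in(I-P)L^2$ with $|\nu^{1/2}f_n|_{L^2}=1$ and $\langle Lf_n,f_n\rangle\to0$; weak compactness plus the compactness of $K$ would produce a strong limit $f_\infty\in\ker L\cap(I-P)L^2=\{0\}$, contradicting $|\nu^{1/2}f_\infty|_{L^2}=1$.

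The main obstacle, and the only step that is genuinely non-routine, is the compactness of $\nu^{-1/2}K\nu^{-1/2}$ in the full range $-3<\gamma\leq1$: the very soft case $\gamma$ close to $-3$ requires the sharper kernel estimates of Grad/Guo rather than a naive Hilbert-Schmidt bound, because $\nu$ decays at infinity. Once compactness is in hand, the rest of the argument is purely functional-analytic. Since the statement is explicitly attributed to \cite{Guo-03,Guo2}, in the write-up I would carry out only the symmetrization and kernel identification above in detail and invoke those references for the compactness-based spectral gap.
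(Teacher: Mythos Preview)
The paper provides no proof of this lemma whatsoever: it is stated as a cited result from \cite{Guo-03,Guo2} and immediately used. Your outline is a correct sketch of the classical argument (symmetrization via the pre/post-collision change of variables, identification of $\ker L$ through collision invariants, and coercivity from compactness of $\nu^{-1/2}K\nu^{-1/2}$), which is indeed what those references do; your closing remark that you would ultimately invoke \cite{Guo-03,Guo2} for the spectral gap matches the paper's treatment exactly.
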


Note that the integral operator $K$ can be written as
\begin{align*}
Kf(v)=\int_{\mathbb{R}^3}k(v,\eta)f(\eta)\,\dd \eta,
\end{align*}
with a symmetric kernel $k(v,\eta)$.
As in \cite{Guo-03,GS}, we introduce a smooth cutoff function $0\leq\chi_m\leq 1$ with $0<m\leq 1$ such that
\begin{equation}
	\chi_m(s)=1~~\mbox{for}~s\leq m;~~~\chi_m(s)=0~~\mbox{for}~s\geq2m.\notag\nonumber
\end{equation}
Then we define
\begin{align}
	(K^mg)(v)&=\int_{\mathbb{R}^3}\int_{\mathbb{S}^2}B(v-u,\omega)\chi_m(|v-u|)\sqrt{\mu(u)\mu(u')}f(v')\,\dd\om \dd u\nonumber\\
	&\quad+\int_{\mathbb{R}^3}\int_{\mathbb{S}^2}B(v-u,\omega)\chi_m(|v-u|)\sqrt{\mu(u)\mu(v')}f(u')\,\dd\om \dd u\nonumber\\
	&\quad-\int_{\mathbb{R}^3}\int_{\mathbb{S}^2}B(v-u,\omega)\chi_m(|v-u|)\sqrt{\mu(v)\mu(u)}f(u)\,\dd\om \dd u\nonumber\\
	&=
	K_2^mf(v)-K^m_1f(v),\nonumber
\end{align}
and
$K^c=K-K^m$.
Correspondingly, one can write
\begin{align}
(K^mf)(v)=\int_{\mathbb{R}^3} k^m(v,\eta) f(\eta)\,\dd\eta,\quad
(K^cf)(v)=\int_{\mathbb{R}^3} k^c(v,\eta) f(\eta)\,\dd\eta.\nonumber
\end{align}
The following estimates on $K^m$ and $K^c$ can be found in \cite{DHWY}.

\begin{lemma}
	Let  $-3<\gamma\leq 1$. Then, for any $0<m\leq1$, it holds that
\begin{equation}\label{2.2}
		|(K^mg)(v)|\leq Cm^{3+\gamma}e^{-\f{|v|^2}{6}}\|g\|_{L^\infty},
\end{equation}
where  $C$ is a generic constant independent of $m$.
The kernels $k^m(v,\eta)$ and $k^c(v,\eta)$ satisfy that for $0\leq a\leq 1$,
\begin{align*}
|k^m(v,\eta)|\leq C
\Big\{|v-\eta|^\gamma+|v-\eta|^{-\f{3-\gamma}2}\Big\}e^{-\frac{|v|^2+|\eta|^2}{16}},
\end{align*}
and
\begin{align}\label{2.4}
|k^c(v,\eta)|&\leq\f{C
m^{a(\gamma-1)}}{|v-\eta|^{1+\f{(1-a)}{2}(1-\gamma)}}\f{1}{(1+|v|+|\eta|)^{a(1-\gamma)}}e^{-\f{|v-\eta|^2}{10}}e^{-\f{||v|^2-|\eta|^2|^2}{16|v-\eta|^2}}\nonumber\\
&\quad+C|v-\eta|^\gamma [1-\chi_m(|v-\eta|)] e^{-\f{|v|^2}{4}}e^{-\f{|\eta|^2}{4}},
\end{align}	
where
$C$ is a generic constant independent of $m$ and $a$.
\end{lemma}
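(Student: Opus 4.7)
The plan is to prove the three estimates separately, each by reducing to a direct pointwise analysis of the collision integral once the cutoff $\chi_m$ is inserted. For the $L^\infty$ bound on $K^m g$, I would treat the three constituent integrals $K_1^m g$ and the two pieces making up $K_2^m g$ in parallel. Using $b(\phi) \leq C|\cos\phi|$, the angular integration over $\mathbb{S}^2$ produces a bounded factor. The cutoff $\chi_m(|v-u|)$ restricts the $u$-integration to $|v-u| \leq 2m \leq 2$, and on this set the crude bound $||v|^2 - |u|^2| \leq 2|v-u|(|v|+|u|) \lesssim 1+|v|$ yields $\sqrt{\mu(v)\mu(u)} \leq C e^{-|v|^2/6}$. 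Pulling out $\|g\|_{L^\infty}$, the remaining integrand contains $|v-u|^\gamma$, whose spatial integral satisfies $\int_{|z|\leq 2m} |z|^\gamma\,\dd z \lesssim m^{3+\gamma}$ since $\gamma > -3$. For the two $K_2^m$-type terms involving $g(v')$ or $g(u')$, the energy conservation $|v'|^2 + |u'|^2 = |v|^2 + |u|^2$ together with $|v'-v|, |u'-u| \leq |v-u| \leq 2m$ ensures the Gaussian remains intact, and the same volume bound applies.

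For the kernel estimate on $k^m$, I would appeal to the classical Grad-Guo representation. After a Carleman-type change of variables on the two $K_2$-pieces, $K$ acquires a symmetric integral kernel $k(v,\eta)$ satisfying the standard pointwise bound $|k(v,\eta)| \lesssim \{|v-\eta|^\gamma + |v-\eta|^{-(3-\gamma)/2}\} e^{-(|v|^2+|\eta|^2)/16}$. Since $0 \leq \chi_m \leq 1$, the kernel $k^m$ obtained by replacing $B$ with $B\,\chi_m(|v-u|)$ in the collision integral inherits the same estimate up to an absolute constant.

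The bound on $k^c$ is the main obstacle, and here one needs a sharper form of the Grad-Guo kernel, namely
\[
|k(v,\eta)| \lesssim \frac{1}{|v-\eta|(1+|v|+|\eta|)^{1-\gamma}}\, e^{-|v-\eta|^2/10}\, e^{-||v|^2-|\eta|^2|^2/(16|v-\eta|^2)}.
\]
In the collision-integral representation of $k^c$, the factor $1-\chi_m(|v-u|)$ forces $|v-u| \geq m$. Tracking this constraint through the Carleman change of variables translates it into a quantitative lower bound on $|v-\eta|$, so one gains a factor $|v-\eta|^{\gamma-1} \gtrsim m^{\gamma-1}$ (a genuine price when $\gamma < 1$). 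Interpolating between this sharp form and the less singular form $|v-\eta|^{-(3-\gamma)/2}$ with parameter $a \in [0,1]$ produces the stated estimate, the flexibility in $a$ being needed later to balance the singularity in $|v-\eta|$ against the loss in $m$. The residual term $|v-\eta|^\gamma[1-\chi_m(|v-\eta|)]\,e^{-|v|^2/4}e^{-|\eta|^2/4}$ accounts for the explicit $K_1$-contribution remaining in $k^c$. The geometric matching between the physical cutoff on $|v-u|$ and the kernel variable $|v-\eta|$ through the Carleman coordinates is the technically delicate step.
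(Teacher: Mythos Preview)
The paper does not prove this lemma; the sentence immediately preceding it reads ``The following estimates on $K^m$ and $K^c$ can be found in \cite{DHWY},'' and no argument is supplied. So there is no in-paper proof to compare your sketch against.

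That said, your outline for \eqref{2.2} and for the $k^m$ bound is the standard one and is fine. For \eqref{2.4} you correctly recognize the structure: the first line is an interpolation (with parameter $a$) between two endpoint bounds on the $K_2^c$ kernel, and the second line is the explicit $K_1^c$ kernel, for which $u=\eta$ so the cutoff is literally $1-\chi_m(|v-\eta|)$. There is, however, a real inaccuracy in how you describe the origin of the factor $m^{a(\gamma-1)}$. You say the constraint $|v-u|\ge m$ ``translates into a quantitative lower bound on $|v-\eta|$.'' It does not: in the Carleman representation for the $K_2$ pieces one sets $\eta=v'$, so $|v-\eta|=|(v-u)\cdot\omega|$, which can be arbitrarily small even when $|v-u|\ge m$. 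What one actually has is $|v-u|^2=|v-\eta|^2+|z|^2$, where $z$ parametrizes the plane orthogonal to $v-\eta$ over which one still integrates. The Carleman integrand carries a power of $|v-u|$ (from the collision kernel and Jacobian), and it is \emph{that} factor one splits as $|v-u|^{a(\gamma-1)}\cdot|v-u|^{(1-a)(\gamma-1)}$, bounding the first piece by $m^{a(\gamma-1)}$ via $|v-u|\ge m$. The remaining piece, combined with the Gaussian in $z$ (which concentrates $z$ at scale $\sim|v|+|\eta|$), produces the singularity $|v-\eta|^{-1-\frac{1-a}{2}(1-\gamma)}$ and the decay $(1+|v|+|\eta|)^{-a(1-\gamma)}$. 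Your phrase ``geometric matching between the physical cutoff on $|v-u|$ and the kernel variable $|v-\eta|$'' suggests a mechanism that would not actually go through; the cutoff never leaves the $z$-integral.
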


Particularly, since the constant $C$ in 
\eqref{2.4} does not depend on $a\in[0,1]$,  we have the following estimates on $k^c(v,\eta)$ 
by taking $a=1$ and $a=0$. 

\begin{lemma}[\cite{DHWY}]
Let $-3<\gamma\leq 1$. One has
\begin{align}\label{2.5}
|k^c(v,\eta)|&\leq\f{C
m^{\gamma-1}}{|v-\eta|(1+|v|+|\eta|)^{1-\gamma}}e^{-\f{|v-\eta|^2}{10}}e^{-\f{||v|^2-|\eta|^2|^2}{16|v-\eta|^2}},
\end{align}	
and
\begin{align}\label{2.7}
|k^c(v,\eta)|&\leq 
C |v-\eta|^\gamma e^{-\f{|v|^2}{4}}e^{-\f{|\eta|^2}{4}}
+C 
|v-\eta|^{-\f{3-\gamma}2}e^{-\f{|v-\eta|^2}{10}}e^{-\f{||v|^2-|\eta|^2|^2}{16|v-\eta|^2}}.
\end{align}		
Moreover,  it holds that
\begin{equation}\label{2.8}
\int_{\mathbb{R}^3}|k^c(v,\eta)|\cdot \frac{(1+|v|^2)^{\frac{\beta}{2}}e^{q|v|^2}}{(1+|\eta|^2)^{\frac{\beta}{2}}e^{q|\eta|^2}}\,\dd\eta\leq C
m^{\gamma-1}(1+|v|)^{\gamma-2},
\end{equation}
and 	
\begin{align*}
\int_{\mathbb{R}^3}|k^c(v,\eta)|\cdot \frac{(1+|v|^2)^{\frac{\beta}{2}}e^{q|v|^2}}{(1+|\eta|^2)^{\frac{\beta}{2}}e^{q|\eta|^2}}\,\dd\eta\leq C
(1+|v|)^{-1},
\end{align*}
where $\beta\ge0$ is an arbitrary positive constant and $0\leq q<
1/8$. Here the constant $C$ in all estimates above is independent of $m$.
\end{lemma}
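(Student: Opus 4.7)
My plan is to obtain the pointwise bounds \eqref{2.5} and \eqref{2.7} as direct specializations of the general estimate \eqref{2.4}, and then to pass to the two weighted integral bounds by a completing-the-square argument compatible with the exponential weight $e^{q|v|^2}$.

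For the pointwise bounds, take $a=0$ in \eqref{2.4} to recover \eqref{2.7}: the first term becomes exactly the $|v-\eta|^{-(3-\gamma)/2}$ piece, while the tail term is dominated by $C|v-\eta|^\gamma e^{-|v|^2/4-|\eta|^2/4}$ after discarding the factor $[1-\chi_m(|v-\eta|)]\leq 1$. For \eqref{2.5}, take $a=1$: the first piece already matches the claim, and it remains to absorb the tail $C|v-\eta|^\gamma[1-\chi_m(|v-\eta|)]e^{-|v|^2/4-|\eta|^2/4}$. On its support $|v-\eta|\geq m$, together with $\gamma\leq 1$, we have $|v-\eta|^\gamma\leq m^{\gamma-1}|v-\eta|$, so it suffices to check
\[
|v-\eta|^{2}(1+|v|+|\eta|)^{1-\gamma}e^{-(|v|^2+|\eta|^2)/4}\lesssim e^{-|v-\eta|^2/10-(|v|^2-|\eta|^2)^2/(16|v-\eta|^2)}.
\]
Setting $\tau=v-\eta$, $\sigma=v+\eta$ so that $|v|^2+|\eta|^2=(|\tau|^2+|\sigma|^2)/2$ and $|v|^2-|\eta|^2=\tau\cdot\sigma$, Cauchy--Schwarz gives $(\tau\cdot\sigma)^2/|\tau|^2\leq |\sigma|^2$ and hence
\[
\frac{|v|^2+|\eta|^2}{4}-\frac{|v-\eta|^2}{10}-\frac{(|v|^2-|\eta|^2)^2}{16|v-\eta|^2}\geq\frac{|\tau|^2}{40}+\frac{|\sigma|^2}{16},
\]
leaving enough room to swallow both $|v-\eta|^{2}$ and $(1+|v|+|\eta|)^{1-\gamma}\leq C(1+|v-\eta|+|v+\eta|)^{1-\gamma}$ into the remaining Gaussian.

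For the two weighted integrals, the decisive identity is
\[
q(|v|^{2}-|\eta|^{2})-\frac{(|v|^2-|\eta|^2)^{2}}{16|v-\eta|^{2}}=-\frac{[(|v|^{2}-|\eta|^{2})-8q|v-\eta|^{2}]^{2}}{16|v-\eta|^{2}}+4q^{2}|v-\eta|^{2}.
\]
The assumption $0\leq q<1/8$ yields $4q^{2}<1/16<1/10$, so $e^{-|v-\eta|^{2}/10}e^{4q^{2}|v-\eta|^{2}}\leq e^{-c|v-\eta|^{2}}$ for some $c>0$. Substituting \eqref{2.5} (resp.\ \eqref{2.7}) into $\int|k^{c}(v,\eta)|\cdot[w_{q,\beta}(v)/w_{q,\beta}(\eta)]\,\dd\eta$ and applying the identity, I pass to spherical coordinates centered at $v$ with radial variable $r=|v-\eta|$. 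The angular integral becomes a one-dimensional Gaussian in $|v|^{2}-|\eta|^{2}$ of width $\sim r$, which localizes $|\eta|$ near $|v|$ and produces the factor $(1+|v|+|\eta|)^{-1}\sim(1+|v|)^{-1}$ on the effective integration region. A final elementary radial integration against $e^{-cr^{2}}$ then delivers the advertised $(1+|v|)^{\gamma-2}$ for \eqref{2.8} and $(1+|v|)^{-1}$ for the remaining estimate, the integrable singularity $r^{-(3-\gamma)/2}$ contributing no growth in $|v|$.

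The most delicate step will be the clean replacement $(1+|v|+|\eta|)^{-k}\;\mapsto\;(1+|v|)^{-k}$: in the near-diagonal region the Gaussian in $|v|^{2}-|\eta|^{2}$ confines $|\eta|$ to within $O(r)$ of $|v|$, but in the far-diagonal region one must lean on the strong decay $e^{-cr^{2}}$ with $r\gtrsim|v|$ to exchange the two factors. Splitting the $\eta$-integration at $r\sim 1+|v|$ and treating the two regions separately is the standard remedy, and the polynomial weight ratio $(1+|v|^{2})^{\beta/2}/(1+|\eta|^{2})^{\beta/2}$ contributes only a tame factor in either region and does not disturb the final powers.
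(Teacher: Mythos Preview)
Your proposal is correct and follows exactly the route the paper indicates: the paper itself merely remarks that \eqref{2.5} and \eqref{2.7} follow from \eqref{2.4} by setting $a=1$ and $a=0$ respectively, and defers the weighted integral bounds to the cited reference, whose argument is precisely the completing-the-square computation you outline. You have in fact supplied more detail than the paper does, including the absorption of the tail term $C|v-\eta|^\gamma[1-\chi_m]e^{-(|v|^2+|\eta|^2)/4}$ into the main piece of \eqref{2.5}, which the paper leaves implicit.
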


In what follows we recall the back-time trajectory in phase space with respect to the diffuse-reflection boundary condition \eqref{1.2} which was first introduced in \cite{Guo2}. First of all, for each boundary point $x\in \pa\Omega$, we define the velocity space for the outgoing particles:
\begin{equation}
\mathcal{V}(x)=\{v'\in\mathbb{R}^3:~v'\cdot n(x)>0\},\nonumber
\end{equation}
associated with the probability measure $\dd\s=\dd\s(x):=
\mu(v')|v'\cdot n(x)|\,\dd v'$.
Given $(t,x,v)$, let $[{X}(s; t,x,v),V(s;t,x,v)]$ be the backward bi-characteristics for the Boltzmann equation, which is determined by
\begin{align}
\begin{cases}
\dis \frac{{\dd}
{X}(s; t,x,v)}{{\dd}
s}=V(s; t,x,v),\\[2mm]
\dis \frac{{\dd}
V(s; t,x,v)}{{\dd}
s}=0,\\[2mm]
[X(t; t,x,v),V(t; t,x,v)]=[x,v].\nonumber
\end{cases}
\end{align}
The solution is then given by
\begin{align}
[X(s;t,x,v),V(s;t,x,v)]=[x-v(t-s),v].\nonumber
\end{align}
For each $(x,v)$ with $x\in \bar{\Omega}$ and $v\neq 0,$ we define the {\it backward exit time} $t_{\mathbf{b}}(x,v)\geq 0$ to be the last moment at which the
back-time straight line $[X(s;0,x,v),V(s;0,x,v)]$ remains in $\bar{\Omega}$:
\begin{equation}
t_{\mathbf{b}}(x,v)=\inf \{\tau \geq 0:x-v\tau\notin\bar{\Omega}\}.\nonumber
\end{equation}
We therefore have $x-t_{\mathbf{b}}{v}\in \partial \Omega $ and $\xi (x-t_{\mathbf{b}}v)=0.$ We also define
\begin{equation}
x_\mathbf{b}(x,v)
=x-t_{\mathbf{b}}v\in \partial \Omega .\nonumber
\end{equation}
Note that $v\cdot n(x_{\mathbf{b}})=v\cdot n({x}_{\mathbf{b}}(x,v)) \leq 0$ always holds true. Let $x\in \bar{\Omega}$, $(x,v)\notin \gamma _{0}\cup \g_{-}$ and
$
(t_{0},x_{0},v_{0})=(t,x,v)$. For $v_{k+1}\in {\mathcal{V}}_{k+1}:=\{v_{k+1}\cdot n({x}_{k+1})>0\}$, the back-time cycle is defined as
\begin{equation}
\left\{\begin{aligned}
X_{cl}(s;t,x,v)&=\sum_{k}\Fi_{[t_{k+1},t_{k})}(s)\{x_{k}-v_k(t_{k}-s)\},\\[1.5mm]
V_{cl}(s;t,x,v)&=\sum_{k}\Fi_{[t_{k+1},t_{k})}(s)v_{k},\nonumber
\end{aligned}\right.
\end{equation}
with
\begin{equation}
({t}_{k+1},{x}_{k+1},v_{k+1})
=({t}_{k}-{t}_{\mathbf{b}}({x}_{k},v_{k}), {x}_{\mathbf{b}}({x}_{k},v_{k}),v_{k+1}).\nonumber
\end{equation}
Define the near-grazing set of $\gamma_{+}$  as
\begin{align}\label{cut}
\gamma^{\v'}_{+}=\left\{(x,v)\in\gamma_{+}:~ |v\cdot n(x)|<{\v'}~\mbox{or}~|v|\geq{\v'}~\mbox{or}~|v|\leq\f1{\v'}\right\}.
\end{align}
Then we have

\begin{lemma}[\cite{Guo2}]\label{lemT}
Let $\v'>0$ be a small positive constant, then it holds that
\begin{multline*}
\int_0^t |f(\tau)\Fi_{\gamma_+\setminus \gamma_{+}^{\v'}}|_{L^1(\ga)}\dd\tau\\
\leq C_{\v',\Omega} \bigg\{\|f(0)\|_{L^1}+\int_0^t \Big[\|f(\tau)\|_{L^1}+\|[\partial_{\tau}+v\cdot\nabla_x] f(\tau)\|_{L^1}\Big]\dd \tau\bigg\},
\end{multline*}
where the positive constant $C_{\v',\Omega}>0$ depends only on $\v'$ and $\Omega$.
\end{lemma}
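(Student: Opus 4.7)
The strategy is to apply a phase-space Green's identity to $|f|\alpha(x,v)$ for a carefully chosen non-negative multiplier $\alpha$ that isolates the non-grazing outflow $\gamma_+\setminus\gamma_+^{\varepsilon'}$. I will construct $\alpha\in C^1(\bar\Omega\times\mathbb R^3)$ satisfying: (i) $\alpha\equiv 0$ whenever $v\cdot n(x)\leq 0$ at a boundary point $x$, so $\alpha|_{\gamma_-}=0$; (ii) $\alpha\geq 1$ pointwise on $\gamma_+\setminus\gamma_+^{\varepsilon'}$; and (iii) $\|\alpha\|_{L^\infty}+\|v\cdot\nabla_x\alpha\|_{L^\infty}\leq C(\varepsilon',\Omega)$. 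An explicit choice will be
\begin{equation*}
\alpha(x,v):=\eta_{\varepsilon'}(v)\,\psi(x)\,\phi\!\left(\tfrac{v\cdot\widetilde n(x)}{\varepsilon'}\right),
\end{equation*}
where $\widetilde n$ smoothly extends the outward normal into a neighbourhood of $\bar\Omega$, $\psi\in C_c^\infty(\mathbb R^3)$ equals $1$ on $\partial\Omega$ and vanishes for $\xi(x)\leq-\delta_0$, $\phi\in C^\infty(\mathbb R)$ is non-negative with $\phi\equiv 0$ on $(-\infty,0]$ and $\phi\equiv 1$ on $[1,\infty)$, and $\eta_{\varepsilon'}\in C_c^\infty(\mathbb R^3)$ equals $1$ on $\{\varepsilon'\leq|v|\leq 1/\varepsilon'\}$ with support in $\{\varepsilon'/2\leq|v|\leq 2/\varepsilon'\}$. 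Property (i) comes from $\phi\equiv 0$ on negative arguments, (ii) from each factor equalling $1$ on the target set, and (iii) from $\mathrm{supp}\,\alpha\subset\{|v|\leq 2/\varepsilon'\}$ together with the smoothness of the constituents.

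Next, I will use the distributional identity
\begin{equation*}
(\partial_\tau+v\cdot\nabla_x)(|f|\alpha)=\mathrm{sgn}(f)\,\alpha\,(\partial_\tau+v\cdot\nabla_x)f+|f|\,v\cdot\nabla_x\alpha,
\end{equation*}
integrate over $[0,t]\times\Omega\times\mathbb R^3$, and apply the divergence theorem to obtain
\begin{equation*}
\int_0^t\!\!\int_\gamma|f|\alpha\,v\cdot n\,dSdvd\tau=\|f(0)\alpha\|_{L^1}-\|f(t)\alpha\|_{L^1}+\int_0^t\!\!\int\Big\{\mathrm{sgn}(f)\alpha(\partial_\tau+v\cdot\nabla_x)f+|f|\,v\cdot\nabla_x\alpha\Big\}\,dxdvd\tau.
\end{equation*}
Because $\alpha=0$ on $\gamma_-$, the left-hand side reduces to $\int_0^t\int_{\gamma_+}|f|\alpha|v\cdot n|\,dSdvd\tau$, which by (ii) is bounded below by $\int_0^t|f\mathbf 1_{\gamma_+\setminus\gamma_+^{\varepsilon'}}|_{L^1(\gamma)}d\tau$. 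Discarding the non-positive term $-\|f(t)\alpha\|_{L^1}$ on the right and using (iii) to bound the remaining three terms respectively by $\|\alpha\|_{L^\infty}\|f(0)\|_{L^1}$, $\|\alpha\|_{L^\infty}\int_0^t\|(\partial_\tau+v\cdot\nabla_x)f\|_{L^1}d\tau$, and $\|v\cdot\nabla_x\alpha\|_{L^\infty}\int_0^t\|f(\tau)\|_{L^1}d\tau$, I obtain exactly the inequality of the lemma with $C_{\varepsilon',\Omega}:=C(\varepsilon',\Omega)$.

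\textbf{Main obstacle.} The principal point of care is the construction of $\alpha$: it must simultaneously vanish on the incoming boundary, dominate the indicator of $\gamma_+\setminus\gamma_+^{\varepsilon'}$, and have bounded $v\cdot\nabla_x\alpha$ with explicit dependence on $\varepsilon'$. The cut-off $\phi(v\cdot\widetilde n(x)/\varepsilon')$ achieves the first two requirements, while the velocity truncation $\eta_{\varepsilon'}$ caps $|v|$ on $\mathrm{supp}\,\alpha$ so that the $|v|$-factor arising in $v\cdot\nabla_x\alpha$ is bounded by a constant depending only on $\varepsilon'$ and $\Omega$. A secondary technical step is justifying the chain rule for $|f|$ when $f$ is merely in $L^1$ with $(\partial_\tau+v\cdot\nabla_x)f\in L^1$; I will handle this by approximating $|f|$ by $\sqrt{f^2+\epsilon^2}$ and letting $\epsilon\downarrow 0$ via dominated convergence.
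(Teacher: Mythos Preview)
The paper does not supply its own proof of this lemma; it is quoted verbatim from \cite{Guo2} and used as a black box. So there is nothing in the present paper to compare your argument against directly.

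That said, your multiplier argument is sound and self-contained. The construction of $\alpha$ achieves exactly what is needed: the cutoff $\phi(v\cdot\widetilde n(x)/\varepsilon')$ kills the incoming part, the velocity truncation $\eta_{\varepsilon'}$ guarantees $|v|\le 2/\varepsilon'$ on the support so that $|v\cdot\nabla_x\alpha|\lesssim \varepsilon'^{-3}$, and the spatial localization $\psi$ confines everything to the collar where $\widetilde n$ is defined. The Green identity then yields the inequality with no loss. The regularization $|f|\approx\sqrt{f^2+\epsilon^2}$ to justify the chain rule is standard and adequate.

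For context, the proof in \cite{Guo2} proceeds instead via characteristics: on $\gamma_+\setminus\gamma_+^{\varepsilon'}$ the backward exit time $t_{\mathbf b}(x,v)$ is bounded below by a constant $c(\varepsilon',\Omega)>0$ (because $|v|\le 1/\varepsilon'$ and $v\cdot n(x)\ge\varepsilon'$), and one integrates $(\partial_\tau+v\cdot\nabla_x)f$ along the straight line from the interior point $x-c(\varepsilon',\Omega)v$ to the boundary point $x$, followed by a change of variables from $(x,v)$ on the boundary to the interior foot of the characteristic. Your test-function approach avoids this change of variables entirely and is arguably cleaner; the characteristic proof, on the other hand, makes the geometric origin of the constant $C_{\varepsilon',\Omega}$ more transparent. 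Either route is acceptable here.
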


In the end we conclude this section with the following iteration lemma which will be crucially used later on. The proof of this lemma can be found in \cite{DHWZ}.

\begin{lemma}
Let  $\{a_i\}_{i=0}^\infty $ be a sequence with each $a_i\geq0$. 
For an integer $k\geq 0$, we define a new sequence $\{A_i^k\}_{i=0}^\infty$ by
$$
A_i^k=\max\{a_i, a_{i+1},\cdots, a_{i+k}\},\quad i=0,1,\cdots.
$$

\begin{itemize}
  \item[(i)] Let $D\geq0$ be a constant.  If
  $$
  a_{i+1+k}\leq \f18 A_i^{k}+D, \quad i=0,1,\cdots,
  $$
  then it holds that
\begin{equation}\label{A.1}
A_i^k\leq \left(\f18\right)^{\left[\frac{i}{k+1}\right]}\cdot\max\{A_0^k, \ A_1^k, \cdots, \ A_k^k \}+\f{8+k}{7} D,
\end{equation}
for any $i\geq k+1$.

  \item[(ii)] Let $0\leq \eta<1$ with $\eta^{k+1}\geq\frac14$.  If
$$
  a_{i+1+k}\leq \f18 A_i^{k}+C_k \cdot \eta^{i+k+1},\quad i=0,1,\cdots
$$
then it holds that
\begin{align}\label{A.1-1}
A_i^k\leq \left(\f18\right)^{\left[\frac{i}{k+1}\right]}\cdot\max\{A_0^k, \ A_1^k, \cdots, \ A_k^k \}+2C_k\f{8+k}{7} \eta^{i+k},
\end{align}
for any $i\geq k+1$.
\end{itemize}

\end{lemma}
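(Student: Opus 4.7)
My plan is to prove both parts by induction on the single index $j$, establishing the uniform pointwise bound
\[
a_j \leq \left(\frac{1}{8}\right)^{[j/(k+1)]} M + L_j, \qquad j \geq 0,
\]
with $M=\max\{A_0^k,\ldots,A_k^k\}$ and with $L_j = \tfrac{8}{7}D$ (constant in $j$) in case (i), or $L_j = 2C_k \eta^j$ in case (ii). Once this pointwise bound is in place, the claim on $A_i^k$ follows by taking the max over $j \in \{i,\ldots,i+k\}$: since $(1/8)^{[j/(k+1)]}$ and $\eta^j$ are both weakly decreasing in $j$, the maximum is attained at $j=i$, yielding a bound of the form $(1/8)^{[i/(k+1)]}M+L_i$. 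The slightly looser stated constants $\tfrac{8+k}{7}$ in \eqref{A.1} and $\tfrac{2(8+k)}{7}\eta^{i+k}$ in \eqref{A.1-1} then accommodate the conversion $\eta^i = \eta^{-k}\eta^{i+k}$, which is controlled by the assumption $\eta^{k+1}\geq 1/4$.

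For the inductive step ($j \geq k+1$), I would rewrite the hypothesis as $a_j \leq \tfrac{1}{8} A_{j-k-1}^k + r_j$ with source term $r_j=D$ in (i) and $r_j=C_k\eta^j$ in (ii). Because $A_{j-k-1}^k$ is the maximum of $a_{j''}$ over $j''\in[j-k-1,j-1]$, all of which are strictly less than $j$, strong induction gives
\[
A_{j-k-1}^k \leq (1/8)^{[j/(k+1)]-1} M + \max_{j''\in[j-k-1,j-1]} L_{j''},
\]
where I use $[(j-k-1)/(k+1)]=[j/(k+1)]-1$. Substituting back into the hypothesis and demanding the resulting bound on $a_j$ take the form $(1/8)^{[j/(k+1)]}M + L_j$ reduces to a closure condition: in case (i), $L/8 + D \leq L$, forcing $L\geq 8D/7$; in case (ii), $(L/8)\eta^{j-k-1} + C_k\eta^j \leq L\eta^j$, equivalently
\[
L\left(1 - \frac{1}{8\eta^{k+1}}\right) \geq C_k.
\]
This is precisely the spot where $\eta^{k+1}\geq 1/4$ is invoked: it guarantees $1/(8\eta^{k+1})\leq 1/2$, so that $L = 2C_k$ suffices.

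I expect the main obstacle to be the careful bookkeeping around the floor function $[\,\cdot\,/(k+1)]$ and the alignment of $\eta$-exponents in case (ii). Specifically, one must verify that the shift $j \mapsto j-k-1$ decreases the floor exponent by exactly one, and that the maximum of $L_{j''}$ over $j''\in[j-k-1,j-1]$ is attained at $j''=j-k-1$ so that the $\eta$-factors combine as above. After the pointwise bound on $a_j$ is in hand, converting $\eta^i$ to the stated $\eta^{i+k}$ costs a factor $\eta^{-k}\leq 4^{k/(k+1)}$, which is uniformly bounded under $\eta^{k+1}\geq 1/4$; absorbing this factor into the constant $2C_k$ produces the stated $\tfrac{2(8+k)}{7}$ in \eqref{A.1-1}, while the analogous constant $\tfrac{8+k}{7}$ in \eqref{A.1} arises from the same computation with $\eta\equiv 1$ and $L=8D/7$.
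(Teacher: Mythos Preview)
The paper does not give its own proof of this lemma, deferring instead to \cite{DHWZ}, so no direct comparison of approaches is possible. Your strong-induction scheme on $a_j$ is sound and correctly establishes the pointwise bound $a_j \leq (1/8)^{[j/(k+1)]}M + L_j$ with $L_j=8D/7$ in case (i) and $L_j=2C_k\eta^j$ in case (ii); the base case $j\leq k$, the floor identity $[(j-k-1)/(k+1)]=[j/(k+1)]-1$, and the closure conditions on $L$ are all handled correctly. Part (i) then follows immediately, since $8D/7 \leq (8+k)D/7$.

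The one gap is in your final conversion for part (ii). Having obtained $A_i^k \leq (1/8)^{[i/(k+1)]}M + 2C_k\eta^i$, you claim that trading $\eta^i$ for $\eta^{i+k}$ costs only a factor $\eta^{-k}\leq 4^{k/(k+1)}$, which you then assert is absorbed by the constant $(8+k)/7$. This last step fails for small $k$: for example with $k=1$ and $\eta=1/2$ (so that $\eta^{k+1}=1/4$ lies on the admissible boundary) one has $\eta^{-k}=2$ while $(8+k)/7=9/7<2$. More generally $4^{k/(k+1)}\leq (8+k)/7$ requires roughly $k\gtrsim 20$. Hence your $\eta^i$ bound does not imply the stated $\eta^{i+k}$ bound with the claimed constant.

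This is, however, a cosmetic mismatch rather than a substantive defect: the bound you actually prove, $A_i^k \leq (1/8)^{[i/(k+1)]}M + 2C_k\eta^i$, is precisely what the paper invokes in its sole application of \eqref{A.1-1}, namely \eqref{3.2.19}, where the source term appears as $\eta_j^i$ rather than $\eta_j^{i+k}$. So your inductive argument is correct and sufficient for the paper's purposes; only the matching to the exact form \eqref{A.1-1} as stated is unjustified.
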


\section{Existence of time-periodic solutions}

\subsection{Linear problem}
We start from the following linear  problem with time-periodic inhomogeneous source term and boundary data:
\begin{align}\label{3.0.1}
\begin{cases}
\pa_tf+ v\cdot \nabla_xf+Lf=g,\\
f(t,x,v)|_{\gamma_{-}}=P_{\gamma}f+r.
\end{cases}
\end{align}
Here the boundary operator $P_{\g}$ is defined by
\begin{equation}
P_{\gamma}f(t,x,v)=\sqrt{\mu(v)}\int_{v'\cdot n(x)>0} f(t,x,v') \sqrt{\mu(v')} |v'\cdot n(x)|\,\dd v'.\nonumber
\end{equation}
Both the inhomogeneous terms $g=g(t,x,v)$ and $r=r(t,x,v)$ are periodic in time with period $T>0$. Recall the weight function \eqref{def.wnot} and we write
$w(v)=w_{q,\beta}(v)$ for brevity. We define
\begin{equation}
h(t,x,v)=w(v) f(t,x,v)\nonumber.
\end{equation}
Then the equation for $h$ reads:
\begin{align*}
\begin{cases}
\pa_th+v\cdot \nabla_x h+\nu(v) h=K_{w} h+wg,\\[2mm]
\dis h(t,x,v)|_{\gamma_-}=\f{1}{\tilde{w}(v)} \int_{v'\cdot n(x)>0} h(t,x,v') \tilde{w}(v') \dd\sigma'+wr(t,x,v),
\end{cases}
\end{align*}
where
$$
\tilde{w}(v)\equiv \f{1}{w(v)\sqrt{\mu(v)}},\quad
K_wh=wK(\f{h}{w}).
$$

The proof of Theorem \ref{thm1.1} heavily relies on the solvability of the linearized time-periodic problem \eqref{3.0.1}.

\begin{proposition}\label{prop3.2}
Let $-3<\gamma\leq 1$, $0\leq q<\f18$ and $\beta>\max\{3,3-\gamma\}$. Assume that $g$ and $r$ are time-periodic functions with period $T>0$, and satisfy the zero-mass condition
\begin{align}\label{3.0.3}
\int_{\Omega}\int_{\mathbb{R}^3}g(t,x,v)\sqrt{\mu(v)}\,\dd v\dd x=\int_{\g_-}r(t,x,v)\sqrt{\mu(v)}
\,{\dd\gamma}=0,
\end{align}
for all $t\in \mathbb{R}$, and $L^{\infty}$ bounds
$$\sup_{0\leq t\leq T}\|\nu^{-1}wg(t)\|_{L^\infty}+\sup_{0\leq t\leq T}|wr(t)|_{L^\infty(\g_-)}<\infty.$$  Then there exists a unique time-periodic solution $f=f(t,x,v)$ with the same period $T$ to the linearized Boltzmann equation \eqref{3.0.1}, such that
$$
\int_{\Omega\times\mathbb{R}^3} f(t,x,v) \sqrt{\mu} \,\dd v\dd x=0
$$
for all $t\in \mathbb{R}$, and
\begin{multline}\label{3.0.4}
\sup_{0\leq t\leq T}\|wf(t)\|_{L^\infty} +\sup_{0\leq t\leq T}{|wf(t)|_{L^\infty(\g)}}\\
\leq C\sup_{0\leq t\leq T} |wr(t)|_{L^\infty(\gamma_-)}+C\sup_{0\leq t\leq T}\|\nu^{-1}wg(t)\|_{L^\infty}.
\end{multline}
Moreover, if  $\Omega$ is convex,  and $g$ is continuous in
$\mathbb{R}\times\Omega\times\mathbb{R}^3$ and {$r$ is continuous in $\mathbb{R}\times\g_-$,} 
then $f(t,x,v)$ is also continuous away from the grazing set $\mathbb{R}\times\gamma_0$.
\end{proposition}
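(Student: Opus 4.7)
The plan is to regard the time-periodic problem as a boundary-value problem on $[0,T]\times\Omega\times\mathbb{R}^3$, with the temporal periodicity $f(0,\cdot,\cdot)=f(T,\cdot,\cdot)$ playing the role of a ``temporal boundary'' condition analogous to the spatial diffuse-reflection one. The argument proceeds in three stages: (i) construction of approximate solutions via a decoupling iteration; (ii) uniform a priori estimates combining an $L^2$ energy estimate, macroscopic estimates, and an $L^\infty$ bootstrap; (iii) passage to the limit together with uniqueness.

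For stage (i), I would set up the iteration $f^0\equiv 0$,
\[
(\partial_t+v\cdot\nabla_x+\nu)f^{n+1}=Kf^n+g,\quad f^{n+1}|_{\gamma_-}=(1-\tfrac{1}{j})P_\gamma f^{n+1}+r,\quad f^{n+1}(0)=f^{n+1}(T),
\]
where a small contraction factor $1-\tfrac{1}{j}$ on the reflection operator (removed later by $j\to\infty$) ensures unique solvability of each linear sub-problem. Given $f^n$, the linear equation for $f^{n+1}$ is solved in mild form along the back-time cycle $(X_{cl},V_{cl})$: each reflection at $\partial\Omega$ brings in the boundary contribution, while whenever the backward trajectory reaches $t=0$ it is continued from $t=T$ by temporal periodicity. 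The resulting infinite series converges owing to the exponential damping $e^{-\nu(v)(t-s)}$ accumulated along characteristics together with the geometric factor $(1-\tfrac{1}{j})^{k}$ at successive bounces.

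For stage (ii), the $L^2$ estimate is obtained by testing the equation against $f$ and integrating over $[0,T]\times\Omega\times\mathbb{R}^3$: the $\partial_t$ term yields $\tfrac{1}{2}(\|f(T)\|_{L^2}^2-\|f(0)\|_{L^2}^2)=0$ by periodicity, the transport term produces a boundary flux that combines with the diffuse-reflection condition to give dissipation of $|(1-P_\gamma)f|_{L^2([0,T]\times\gamma)}^2$, and \eqref{c} controls $\|\nu^{1/2}(I-P)f\|_{L^2}^2$. The macroscopic part $Pf=(a+b\cdot v+c(|v|^2-3)/\sqrt{6})\sqrt{\mu}$ is controlled by constructing time-dependent test functions adapted from the steady case in \cite{DHWZ} whose transport action extracts each coefficient after integration by parts, with the zero-mass condition \eqref{3.0.3} eliminating the ambiguity in the mass mode. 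The $L^\infty$ bound is then established for $h=wf$ by applying the double-Duhamel trick to $K_w h$ along the back-time cycle and splitting $K=K^m+K^c$: by \eqref{2.2} the $K^m$ part contributes a small multiplier times $\sup_t\|h(t)\|_\infty$, while by \eqref{2.8} the $K^c$ part is dominated by the $L^2$ norm just estimated; Lemma~2.5 then yields \eqref{3.0.4} uniformly in $j$ and $n$.

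The main obstacle lies in the macroscopic estimate in the time-periodic setting: one must construct time-dependent test functions whose $\partial_t$ and $v\cdot\nabla_x$ contributions combine to extract each moment cleanly, while their temporal profile must produce no spurious contributions at $t=0$ and $t=T$ (cancelled precisely by the periodicity of $f$). A secondary subtlety is the soft-potential case $-3<\gamma<0$ where $\nu(v)\to 0$ as $|v|\to\infty$ weakens the damping along characteristics; this is handled using the weight $w_{q,\beta}$ together with the parameter flexibility $a\in[0,1]$ in \eqref{2.4}. Once \eqref{3.0.4} is proved uniformly in $j$, sending $j\to\infty$ (by compactness and the uniqueness obtained from \eqref{3.0.4} applied to the difference of two solutions of the homogeneous problem) yields the desired time-periodic solution. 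Continuity away from the grazing set when $\Omega$ is convex follows from the classical continuity-along-trajectories argument, with the temporal periodicity providing continuous matching across $t=0$ and $t=T$.
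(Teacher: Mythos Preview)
Your overall architecture (temporal periodicity as a boundary condition, $L^2$--$L^\infty$ bootstrap, macroscopic test functions, and the $(1-\tfrac1j)$ penalty on the reflection) matches the paper's philosophy, but the specific approximation scheme you propose has a genuine gap that the paper's proof is carefully designed to avoid.

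The difficulty is your decoupling iteration
\[
(\partial_t+v\cdot\nabla_x+\nu)f^{n+1}=Kf^n+g,\qquad f^{n+1}|_{\gamma_-}=(1-\tfrac1j)P_\gamma f^{n+1}+r.
\]
In the $L^2$ energy identity for $f^{n+1}$ you pick up the cross term $\langle Kf^n,f^{n+1}\rangle$, which cannot be absorbed: the coercivity inequality \eqref{c} applies to $\langle Lf,f\rangle=\langle (\nu-K)f,f\rangle$ with the \emph{same} $f$, and is destroyed once $\nu$ and $K$ act on different iterates. Nor can you rescue this with the macroscopic estimate, because your scheme does not preserve mass: integrating against $\sqrt{\mu}$ gives $\partial_t\int f^{n+1}\sqrt{\mu}+\int(\nu f^{n+1}-Kf^n)\sqrt{\mu}=0$, and the last integral does not vanish when $f^{n+1}\neq f^n$. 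Hence the zero-mass hypothesis \eqref{3.0.3} is not inherited by the iterates, and the test-function argument you invoke for the macroscopic part is unavailable at the approximation level. Without an $L^2$ bound on the iterates, the $L^\infty$ estimate of Proposition~\ref{prop3.1} (which carries an $L^2$ term on its right-hand side) cannot be closed, so the appeal to Lemma~2.5 does not go through.

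The paper resolves this by inserting two additional parameters. First, an artificial damping $\vep>0$ is added to the equation, which supplies the term $\vep\|f\|_{L^2}^2$ in the energy estimate and makes the approximation scheme solvable \emph{without} any zero-mass information (Lemmas~\ref{lem3.2.1}--\ref{lem3.2.3}). Second, rather than iterating $Kf^n$, the paper keeps $K$ coupled to the unknown via a continuation parameter $\lambda\in[0,1]$ in $\nu f-\lambda Kf$, so that $\langle \lambda Kf,f\rangle\le\lambda\|\nu^{1/2}f\|^2$ can be absorbed; one bootstraps from $\lambda=0$ (where $K$ is absent) to $\lambda=1$ using the uniform-in-$\lambda$ a priori bound. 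Only at $\lambda=1$ does the mass ODE $\partial_t\rho^\vep+\vep\rho^\vep=0$ hold, forcing $\rho^\vep\equiv 0$ by periodicity; this is precisely where the macroscopic estimate enters to obtain bounds uniform in $\vep$ and to pass to the limit $\vep\to 0$. Your sketch omits both the $\vep$-damping and the $\lambda$-continuation, and without at least the former the scheme as written will not close.
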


The following two subsections will be devoted to the proof of Proposition \ref{prop3.2}.

\subsection{A priori $L^\infty$ estimate}

To prove Proposition \ref{prop3.2},
we start from
the a priori $L^{\infty}$ estimate on solutions to the following time-periodic problems:
\begin{equation}\label{3.1.1}
\begin{cases}
\pa_th^{i+1}+v\cdot\nabla_x h^{i+1}+(\vep+\nu(v)) h^{i+1}=\lambda K_w^m h^i+\lambda K^c_w h^i +wg,\\[3mm]
\dis h^{i+1}(t,x,v)|_{\gamma_-}=\f{1}{\tilde{w}(v)} \int_{v'\cdot n(x)>0} h^i({t},x,v') \tilde{w}(v') \dd\sigma'+w(v)r({t},x,v),
\end{cases}
\end{equation}
for $i=0,1,2,\cdots$, where $h^0:=h^0(t,x,v)$ is given. Here $0\leq \lambda\leq 1$ and $\vep>0$ are given parameters, and $g(t,x,v)$ and $r(t,x,v)$ are both time-periodic functions with period $T>0$. Before doing that, we need some preparations. The following lemma gives the mild formulation of $h^{i+1}$. As the proof is more or less the same as {\cite[Lemma 24]{Guo2}, }
we omit it for brevity.

\begin{lemma}
Let $0\leq \lambda\leq 1$ and $\vep>0$. For any $t\in [0,T]$, for almost every $(x,v)\in \bar{\Omega}\times \mathbb{R}^3\backslash (\gamma_0\cup \gamma_-) $ and for any $s\leq t$,  
 we have
\begin{equation}
\label{3.1.2}
h^{i+1}(t,x,v)
=\sum_{\ell=1}^{4}J_\ell+\sum_{\ell=5}^{14}\Fi_{\{t_1> s\}} J_\ell
\end{equation}
with
\begin{align*}
&J_1=\Fi_{\{t_1\leq s\}} e^{-(\vep+{\nu}(v))(t-s)} h^{i+1}(s,x-v(t-s),v),\\
&J_2+J_3+J_4=\int_{\max\{{t}_1,s\}}^t e^{-{\nu}(v)(t-\tau)}\Big[\lambda K_w^mh^{i}+\lambda K^c_wh^{i}+wg\Big](\tau,x-v(t-\tau),v)\dd \tau,\\
&J_5=e^{-(\vep+{\nu}(v))(t-t_1)}w(v) r(t_1,x_1,v),
\end{align*}
\begin{align*}
&J_6=\frac{e^{-(\vep+{\nu}(v))(t-t_1)}}{\tilde{w}(v)} \int_{\Pi _{j=1}^{k-1}\mathcal{V}_{j}}
\sum_{l=1}^{k-2} \Fi_{\{t_{l+1}>s\}} w(v_l)r(t_{l+1},x_{l+1},v_{l})\dd \Sigma_{l}({t}_{l+1}),\\
&J_7=\frac{e^{-(\vep+{\nu}(v))(t-{t}_1)}}{\tilde{w}(v)} \int_{\Pi _{j=1}^{k-1}{\mathcal{V}}_{j}} \sum_{l=1}^{k-1} \Fi_{\{{t}_{l+1}\leq s<{t}_l\}} h^{i+1-l}(s,{x}_l-{v}_l({t}_l-s),v_l) \dd\Sigma_{l}(s),
\end{align*}
\begin{multline*}
J_8+J_9+J_{10}=\frac{e^{-(\vep+{\nu}(v))(t-{t}_1)}}{\tilde{w}(v)} \int_{\Pi _{j=1}^{k-1}{\mathcal{V}}_{j}} \sum_{l=1}^{k-1}\int_s^{{t}_l} \Fi_{\{{t}_{l+1}\leq s<{t}_l\}}\\
[\lambda K_w^mh^{i-l}+\lambda K^c_wh^{i-l}+wg](\tau,{x}_l-{v}({t}_l-\tau),v_l) \dd\Sigma_l(\tau),
\end{multline*}
\begin{multline*}
J_{11}+J_{12}+J_{13}=\frac{e^{-(\vep+{\nu}(v))(t-{t}_1)}}{\tilde{w}(v)}\int_{\Pi_{j=1}^{k-1}{\mathcal{V}}_{j}} \sum_{l=1}^{k-1}\int_{{t}_{l+1}}^{{t}_l} \Fi_{\{{t}_{l+1}>s\}} \\
[\lambda K_w^mh^{i-l}+\lambda K^c_wh^{i-l}+wg](\tau,{x}_l-{v}({t}_l-\tau),v_l) \dd\Sigma_l(\tau),
\end{multline*}
\begin{align*}
J_{14}=\frac{e^{-(\vep+{\nu}(v))(t-{t}_1)}}{\tilde{w}(v)} \int_{\Pi _{j=1}^{k-1}{\mathcal{V}}_{j}}  {\Fi_{\{{t}_{k}>s\}}}
 h^{i+1-k}(t_k,{x}_k,v_{k-1}) \dd\Sigma_{k-1}({t}_k).
\end{align*}
Here we have denoted
\begin{align*}
&\dd\Sigma_l(\tau) = \big\{\Pi_{j=l+1}^{k-1}\dd{\sigma}_j\big\}\cdot \big\{\tilde{w}(v_l) e^{-(\vep+{\nu}(v_l))({t}_l-\tau)} \dd{\sigma}_l\big\}\\
&\qquad\qquad\qquad\cdot \big\{\Pi_{j=1}^{l-1} e^{-(\vep+{\nu}(v_j))({t}_j-{t}_{j+1})} \dd{\sigma}_j\big\},\nonumber
\end{align*}
and $\dd\sigma_j=\mu(v_j)\{n(x_j)\cdot v_j\}\dd v_j$.
\end{lemma}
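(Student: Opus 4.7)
The plan is to iteratively apply the Duhamel representation along back-time characteristics, invoking the diffuse-reflection boundary condition each time the characteristic hits $\partial\Omega$. Since the equation for $h^{i+1}$ is linear transport with damping $\varepsilon + \nu(v)$ and source $\lambda K_w^m h^i + \lambda K_w^c h^i + wg$, and the boundary condition expresses $h^{i+1}|_{\gamma_-}$ in terms of $wr$ plus a diffuse integral of $h^i|_{\gamma_+}$, repeated substitution will generate a nested formula whose $l$-th level involves $h^{i+1-l}$ together with the source $K_w h^{i-l} + wg$ on the $l$-th back-time segment.

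First I would integrate the equation along the straight-line backward characteristic $\tau \mapsto (x - v(t - \tau), v)$ from $\tau = t$ down to $\tau = \max\{t_1, s\}$. This segment lies entirely in $\bar\Omega$, and Duhamel's formula writes $h^{i+1}(t, x, v)$ as an exponentially damped endpoint value plus the source integral, the latter producing $J_2 + J_3 + J_4$. If $t_1 \le s$, the endpoint is the interior point $(s, x - v(t-s), v)$, yielding $J_1$. If $t_1 > s$, the endpoint is the boundary point $(t_1, x_1, v)$, and the diffuse-reflection condition splits $h^{i+1}(t_1, x_1, v)$ into the inhomogeneous part $w(v) r(t_1, x_1, v)$ (contributing $J_5$) plus the diffuse integral over $v_1 \in \mathcal{V}_1$ of $h^i(t_1, x_1, v_1)$. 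Second, I would iterate: to evaluate $h^i(t_1, x_1, v_1)$ one re-applies the same representation with $(t, x, v)$ replaced by $(t_1, x_1, v_1)$, introducing the next exit $(t_2, x_2)$ and integration variable $v_2 \in \mathcal{V}_2$. Repeating this $k - 1$ times assembles the measure $d\Sigma_l(\tau)$ from the current-segment factor $\tilde w(v_l) e^{-(\varepsilon + \nu(v_l))(t_l - \tau)} d\sigma_l$, the damping $\prod_{j=1}^{l-1} e^{-(\varepsilon + \nu(v_j))(t_j - t_{j+1})} d\sigma_j$ accumulated from completed earlier segments, and the as-yet-uncontracted $\prod_{j=l+1}^{k-1} d\sigma_j$ from outer iteration loops. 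Each completed level $l$ produces three types of contributions: the $wr$ boundary source at $t_{l+1}$, giving the $l$-th summand in $J_6$; the source integral $\lambda K_w^m h^{i-l} + \lambda K_w^c h^{i-l} + wg$ over $[t_{l+1}, t_l]$ when $t_{l+1} > s$, giving $J_{11} + J_{12} + J_{13}$; and the truncation pair $J_7$ and $J_8 + J_9 + J_{10}$ when $t_{l+1} \le s < t_l$, representing the endpoint value of $h^{i+1-l}$ at time $s$ and the shortened source integral on $[s, t_l]$. The residual after $k - 1$ iterations, at the deepest reached point $(t_k, x_k, v_{k-1})$ with $t_k > s$, is exactly $J_{14}$.

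The derivation is essentially that of \cite[Lemma 24]{Guo2}, and the time-periodic framework plays no role at this stage, since the formula is a purely algebraic Duhamel expansion valid for any $s \le t$. I expect the main obstacle to be the combinatorial bookkeeping: correctly tracking the superscript decrement of $h^{i+1-l}$ at each reflection, the change of velocity argument from $v_l$ to $v_{l+1}$ at each iteration level, and the precise split of $d\Sigma_l(\tau)$ into pre-segment, current-segment, and post-segment factors, so that the assembled identity is consistent across all $k$ and $l$, and the inductive structure of the iteration is respected throughout.
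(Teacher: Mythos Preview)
Your proposal is correct and matches the paper's approach exactly: the paper does not give a proof at all, stating only that ``the proof is more or less the same as \cite[Lemma 24]{Guo2}, we omit it for brevity.'' Your sketch of the iterative Duhamel expansion along back-time characteristics, with the diffuse-reflection boundary condition decrementing the superscript at each bounce, is precisely the argument of that reference and is in fact more detailed than what the paper provides.
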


Next, the following lemma is due to \cite{Guo2}, which gives a quantitative smallness estimate on the measure of possible velocities, so that the particle can not reach down the underlying initial plane, in terms of the number of reflection.

\begin{lemma}\label{lem.smbd}
Let $T>0$. Let $n$ be sufficiently large. There exist constants $\hat{C}_1$ and $\hat{C}_2$ independent of $n$ such that for $k=\hat{C}_1(nT)^{\frac54}$ and $(t,x,v)\in[0,T]\times\bar{\Omega}\times\mathbb{R}^3$, it holds that
\begin{align}\label{3.1.3}
\int_{\Pi _{j=1}^{k-1}{\mathcal{V}}_{j}} \Fi_{\{{t}_k>-nT\}}~  \Pi _{j=1}^{k-1} \dd{\sigma} _{j}\leq \left(\frac12\right)^{\hat{C}_2(nT)^{\frac54}}.
\end{align}
\end{lemma}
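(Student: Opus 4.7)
The plan is to follow the probabilistic argument originally due to Guo. Since each $\,\dd\sigma_j$ is a probability measure on $\mathcal{V}_j$, the integral in \eqref{3.1.3} is precisely the $\prod \dd\sigma_j$-probability that the back-time bi-characteristic chain $(t_j, x_j, v_j)_{j=1}^{k-1}$ still satisfies $t_k > -nT$. The heuristic is that fitting $k \sim (nT)^{5/4}$ reflections into a backward time window of length at most $nT + T$ forces the reflected velocities to be either atypically small (so the particle lingers in $\Omega$) or atypically large (so it crosses $\Omega$ too quickly), and both Gaussian tails carry small $\,\dd\sigma_j$-mass.

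To execute this, I introduce two cutoffs $\delta\in(0,1)$ and $R>1$, to be chosen below, and partition each $\mathcal{V}_j$ into three regimes: small velocities $\{|v_j|\leq\delta\}$, moderate velocities $\{\delta<|v_j|<R\}$, and large velocities $\{|v_j|\geq R\}$. Direct computation with $\mu(v)=(2\pi)^{-1}e^{-|v|^2/2}$ yields the measure bounds
$$
\int_{|v|\leq\delta,\, v\cdot n>0}\mu(v)|v\cdot n|\,\dd v\lesssim \delta^4,\qquad \int_{|v|\geq R,\, v\cdot n>0}\mu(v)|v\cdot n|\,\dd v\lesssim e^{-R^2/4}.
$$
For a moderate velocity, the back-exit time satisfies $t_j-t_{j+1}=|x_j-x_{j+1}|/|v_j|\geq |x_j-x_{j+1}|/R$; outside a small-measure near-grazing subset (whose $\,\dd\sigma_j$-mass can be controlled in the spirit of the near-grazing cutoff \eqref{cut}), the chord $|x_j-x_{j+1}|$ is bounded below by some $c_\Omega>0$, so each genuinely moderate step contributes at least $c_\Omega/R$ to the total backward travel time.

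Combining these, on the event $\{t_k>-nT\}$ the number of genuinely moderate steps is at most $M:=R(nT+T)/c_\Omega$, so at least $(k-1)-M$ of the $v_j$'s must lie in the exceptional (small, large, or near-grazing) subset of $\mathcal{V}_j$, whose $\,\dd\sigma_j$-mass is at most $C(\delta^4+e^{-R^2/4})$. Choosing the $M$ moderate slots in at most $\binom{k-1}{M}$ ways and applying the product-measure bound on the remaining slots yields
$$
\int_{\prod\mathcal{V}_j}\Fi_{\{t_k>-nT\}}\prod_{j=1}^{k-1}\dd\sigma_j\;\leq\;\binom{k-1}{M}\bigl[C(\delta^4+e^{-R^2/4})\bigr]^{(k-1)-M}.
$$
Taking $k=\hat{C}_1(nT)^{5/4}$ and $R=(nT)^{1/4}$ forces $M\leq k/2$, and fixing $\delta$ small enough that $C\delta^4\leq 1/16$ reduces the right-hand side to $2^{k-1}(1/8)^{k/2}\leq (1/2)^{\hat{C}_2(nT)^{5/4}}$ for a suitable $\hat{C}_2>0$.

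The main obstacle I expect is the rigorous treatment of the near-grazing moderate velocities, namely those of bounded speed but with vanishingly short back-exit time, which requires a transversality-type estimate showing that $\{(x_j,v_j)\in\mathcal{V}_j:t_{\mathbf{b}}(x_j,v_j)<\epsilon\}$ has $\,\dd\sigma_j$-mass of order $O(\epsilon)$ uniformly in $x_j\in\partial\Omega$. The exponent $5/4$ itself is the unique scaling that balances the constraint $R\sim k/(nT)$ coming from the moderate time budget against the requirement that $e^{-R^2/4}$ tend to zero as $nT\to\infty$, so that the exceptional-measure factor remains summable against the binomial combinatorics.
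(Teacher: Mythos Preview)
The paper does not supply its own proof of this lemma; it attributes the statement directly to Guo \cite{Guo2} (where it is Lemma~23) and quotes the result without argument. Your sketch reproduces Guo's original combinatorial proof essentially verbatim: the partition of each $\mathcal{V}_j$ into small, large, and near-grazing velocities; the observation that on the complementary ``good'' set each back-time step $t_j-t_{j+1}$ is bounded below; and the union bound over the at most $\binom{k-1}{M}$ placements of the good indices. So there is nothing to compare against, and your approach is the standard one.

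Two remarks on the details. First, the displayed exceptional-measure factor $C(\delta^4+e^{-R^2/4})$ omits the near-grazing contribution you introduced one sentence earlier; it should read $C(\delta^4+e^{-R^2/4}+\text{near-grazing mass})$, with the last term then absorbed into the same small constant. Second, regarding the obstacle you flag: the clean way to obtain the uniform lower bound on the sojourn time, valid for general (not necessarily convex) domains as in this paper, is to Taylor-expand the level-set function $\xi$ along the backward ray. This yields
\[
t_{\mathbf b}(x_j,v_j)\;\gtrsim\;\frac{v_j\cdot n(x_j)}{|v_j|^2}
\]
uniformly in $x_j\in\partial\Omega$, hence $|x_j-x_{j+1}|\gtrsim (v_j\cdot n(x_j))/|v_j|$. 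The natural near-grazing exceptional set is therefore the \emph{angular} one $\{v_j\cdot n(x_j)<c'\,|v_j|\}$, whose $\dd\sigma_j$-mass is $O((c')^2)$ uniformly in $x_j$ by direct computation with the Gaussian. With this input your combinatorics goes through as written and delivers exactly the $5/4$ exponent of the stated lemma.
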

\begin{proposition}\label{prop3.1}
Let $-3<\gamma\leq 1, \vep>0$, {$0\leq q<1/8$} and $\beta>3$. Assume that $h^i(t,x,v)$ are all time-periodic functions with period $T>0$ and satisfy $$\sup_{0\leq t\leq T}\{\|h^i(t)\|_{L^\infty}+|h^i(t)|_{L^\infty{{(\g)}}}\}<\infty,$$ for $i=0,1,2,\cdots$. Then there exist
two universal constants $C>0$ and $n>1$ large enough, independent of $i,\lambda$ and $\vep$, such that for $k=\hat{C}_2(nT)^{\frac54}$, it holds, for $i\geq k$, that
\begin{align}\label{3.1.4}
\sup_{0\leq t\leq T}&\|h^{i+1}(t)\|_{L^\infty}+\sup_{0\leq t\leq T}|h^{i+1}(t)|_{L^{\infty}{(\g)}}\nonumber\\
&\leq \frac18 \max_{0\leq l\leq k}\{\sup_{0\leq t\leq T} \|h^{i-l}(t)\|_{L^\infty}\}+C \max_{0\leq l\leq k}\left\{\left\|\f{h^{i-l}}{\langle v\rangle^{|\gamma|} w}\right\|_{L^2([0,T];L^2)}\right\}\nonumber\\
&\quad+C\sup_{0\leq t\leq T}\{\|\nu^{-1}wg(t)\|_{L^\infty}+|wr(t)|_{L^\infty(\g_-)}\}.
\end{align}
Here we have denoted $\langle v\rangle:=(1+|v|^2)^{1/2}$. Moreover, if $h^i\equiv h$ for $i=1,2,\cdots$, i.e., $h$ is a solution, then \eqref{3.1.4} is reduced to the following form
\begin{multline}\label{3.1.5}
\sup_{0\leq t\leq T}\|h(t)\|_{L^\infty}+\sup_{0\leq t\leq T}|h(t)|_{L^{\infty}{(\g)}}\\
\leq C\sup_{0\leq t\leq T}\{\|\nu^{-1}wg(t)\|_{L^\infty}+|wr(t)|_{L^\infty(\g_-)}\}+C\left\|\f{h}{\langle v \rangle^{|\gamma|}w}\right\|_{L^2([0,T];L^2)}.
\end{multline}
\end{proposition}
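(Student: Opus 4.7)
The plan is to derive the pointwise bound by iterating the mild formulation \eqref{3.1.2} deep into the past, exploiting the time-periodicity of every iterate to convert ``initial data at time $s$'' into quantities already controlled by the $L^\infty$-norms that appear on the right-hand side of \eqref{3.1.4}. Fix $(t,x,v)\in[0,T]\times\bar{\Omega}\times\mathbb{R}^3$ and apply \eqref{3.1.2} with $s=-nT$ and $k=\hat{C}_2(nT)^{5/4}$ as in the statement, where $n\geq 1$ is a large integer to be chosen at the end. Time-periodicity gives $h^{i+1-l}(-nT,\cdot)=h^{i+1-l}(0,\cdot)$ for every $l$, so each ``data'' term at $s$ is automatically dominated by $\sup_{0\leq t\leq T}\|h^{i+1-l}\|_{L^\infty}$; with this size of $k$, Lemma~\ref{lem.smbd} provides the small-measure gain at maximal reflection depth.

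I would then estimate the $J_\ell$ group by group. On the support of $J_1$ the constraint $t_1\leq -nT$ forces $|v|\leq\mathrm{diam}(\Omega)/(nT)$, on which $\nu(v)\geq\nu_0/2$ for $n$ large, so $|J_1|\leq e^{-\nu_0 nT/2}\sup\|h^{i+1}\|_{L^\infty}$, to be absorbed into the left-hand side. The boundary-source pieces $J_5,J_6$ and all $wg$-integrals inside $J_2+J_3+J_4$, $J_8+J_9+J_{10}$, $J_{11}+J_{12}+J_{13}$ are controlled by $C(|wr|_{L^\infty(\gamma_-)}+\|\nu^{-1}wg\|_{L^\infty})$ via $\int_\tau^t e^{-\nu(v)(t-\tau)}\dd\tau\leq\nu(v)^{-1}$ together with the fact that each $\dd\sigma_j$ is a probability measure on $\mathcal{V}_j$. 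For the $K_w^m$ contributions I would invoke \eqref{2.2}: since $w(v)e^{-|v|^2/6}$ is bounded (because $q<1/8<1/6$), $|K_w^m h^{i-l}(v)|\leq Cm^{3+\gamma}\|h^{i-l}\|_{L^\infty}$, and their total is $\leq Cm^{3+\gamma}\max_{0\leq l\leq k}\|h^{i-l}\|_{L^\infty}$, a small fraction of $1/8$ once $m$ is chosen small.

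The $K_w^c$ pieces call for the Vidav change-of-variables trick: bound the kernel by \eqref{2.5}--\eqref{2.8}, apply Cauchy--Schwarz in $\eta$, and, on each straight-line segment of the back-time cycle, change the integration variable from the running parameter along the segment to the physical point $y=x_l-v_l(t_l-\tau)$, so that the integral converts into the weighted $L^2_tL^2_{x,v}$-norm of $h^{i-l}/(w\langle v\rangle^{|\gamma|})$; this produces the $L^2$-term in \eqref{3.1.4} with a constant $C_m$ depending on $m$ but independent of $n,i,\lambda,\vep$. The deeply-reflected piece $J_{14}$ is handled directly by Lemma~\ref{lem.smbd}:
\[
|J_{14}|\leq \bigl(\tfrac{1}{2}\bigr)^{\hat{C}_2(nT)^{5/4}}\max_{0\leq l\leq k}\|h^{i-l}\|_{L^\infty},
\]
while $J_7$ is split into a small-velocity piece, on which a Vidav change of variables in $v_l$ yields an $L^2$-bound, and a generic-velocity piece whose reflection weight is controlled by the same measure estimate. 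Choosing $m$ small first and then $n$ large so that all ``bad'' $L^\infty$-coefficients sum to at most $1/16$, and undoing the $J_1$ absorption (a loss of factor at most $2$), yields the coefficient $1/8$ on $\max\|h^{i-l}\|_{L^\infty}$ in \eqref{3.1.4}. The trace $|h^{i+1}|_{L^\infty(\gamma_+)}$ follows by applying the mild formula at boundary points with $v\cdot n(x)>0$, and $|h^{i+1}|_{L^\infty(\gamma_-)}$ is read off directly from the reflection law. Finally, \eqref{3.1.5} is \eqref{3.1.4} specialized to $h^i\equiv h$, with the $\tfrac18$-term absorbed into the left-hand side.

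The main obstacle is the uniform-in-$k$ execution of Vidav's trick inside the nested sums $J_8$--$J_{13}$: every extra reflection adds a $\dd\sigma_j$-integration and a new straight-line segment, and the change of variables must be carried out on each such segment while the final constant remains independent of $k\sim(nT)^{5/4}$. Controlling the degeneration of the Jacobian on grazing portions of these cycles, via the near-grazing cutoff \eqref{cut} and Lemma~\ref{lemT} together with the $q<1/8$ slack in the Gaussian weight, is where the bulk of the technical work lies. A subsidiary subtlety is extracting the $1/8$-coefficient in the boundary-data term $J_7$: careful bookkeeping of the indicator $\Fi_{\{t_{l+1}\leq s<t_l\}}$ (which for small $l$ forces the reflection velocities to be small on average) is needed to gain smallness either from a measure argument analogous to Lemma~\ref{lem.smbd} or from an $L^2$-type rearrangement via the Vidav change of variables.
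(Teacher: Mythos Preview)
Your overall strategy---set $s=-nT$, exploit periodicity, and bound each $J_\ell$---matches the paper, and your treatment of $J_1,J_5,J_6,J_{14}$ and of the $K^m_w$-pieces is essentially correct. However, two of the steps you sketch would not go through as written.

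First, your handling of $J_7$ is wrong. There is no $K$-kernel in $J_7$, so there is no velocity variable to Cauchy--Schwarz against, and a change $v_l\mapsto y_l=x_l-v_l(t_l-s)$ would at best produce an $L^1_x$-type quantity at the single time $s$, with no small prefactor; the measure estimate of Lemma~\ref{lem.smbd} is also inapplicable since $J_7$ lives on $\{t_{l+1}\le s<t_l\}$, not on $\{t_k>s\}$. The actual mechanism is pure time-decay: the accumulated exponential $e^{-\nu(v)(t-t_1)}\prod_j e^{-\nu(v_j)(t_j-t_{j+1})}\,e^{-\nu(v_l)(t_l-s)}$ spans the full interval of length $t-s\ge nT$, and after using $\nu(v)(\tau_1-\tau_2)+\tfrac{|v|^2}{16}\ge c(\tau_1-\tau_2)^{2/(2+|\gamma|)}$ (Young's inequality, to cover soft potentials) together with the Gaussian in $\dd\sigma_l$, one gets the factor $k^2e^{-c(nT)^{\alpha}}$, which is made small by taking $n$ large.

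Second, the Vidav trick cannot be executed on the first straight segment (the $K^c_w$-part of $J_2+J_3+J_4$, i.e.\ $J_3$): there the velocity $v$ is fixed, so no change of variables to a spatial integral is available, and your description of changing ``the running parameter along the segment'' to $y$ is a one-dimensional substitution that does not recover $\|\cdot\|_{L^2(\Omega\times\mathbb{R}^3)}$. The paper resolves this by iterating once more: after collecting all the other $J_\ell$ into $A_i(t,v)$, one substitutes the resulting bound \eqref{3.1.22} for $h^i(\tau,x',v')$ back into the remaining $J_3$-integral, producing the double-$K^c_w$ term $B_2$ in which the intermediate velocity $v'$ \emph{is} integrated and the change $v'\mapsto y'=x'-(\tau-\tau')v'$ (with the cutoff $\tau'\le\tau-1/N$ to control the Jacobian) is legitimate. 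Relatedly, the ``main obstacle'' you flag is a red herring: constants are allowed to grow polynomially in $k\sim(nT)^{5/4}$ (they are beaten by $e^{-c(nT)^\alpha}$, $m^{3+\gamma}$, and $1/N$), and neither the near-grazing cutoff \eqref{cut} nor Lemma~\ref{lemT} is used anywhere in this proposition.
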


\begin{proof}
Let $s=-nT$ in \eqref{3.1.2} with $n> 1$ large enough such that \eqref{3.1.3} holds true. We first estimate $J_1$. Note that by periodicity, we have $$h^{i+1}(s,x-(t-s)v,v)=h^{i+1}(0,x-(t-s)v,v).$$ Then if $0\leq\gamma\leq1$, $\nu(v)\geq \nu_0>0$ for some constant $\nu_0$. Then it is direct to get
\begin{align}\label{3.1.6}
|J_1|\leq e^{-\nu_0(t+NT)}\sup_{0\leq t\leq T}\|h^{i+1}(t)\|_{L^{\infty}}.
\end{align}
If $-3<\gamma<0$, $\nu(v)\sim(1+|v|)^{\g}$ no longer has a positive lower bound, when $|v|$ is sufficiently large.
In this case we note that
$$
0\leq t_{{\mathbf{b}}}(x,v)\leq \f{d_\Omega}{|v|},
$$
where $d_\Omega:=\sup_{x,y\in
{\Omega}}|x-y|$ is the diameter of $\Omega$. Then for $|v|>\f{d_\Omega}{nT}$, it holds that
$$
t_1-s=t-t_{{\mathbf{b}}}(x,v)
{+nT}>0.
$$
In other words, $J_1$  appears only when the particle velocity $|v|$ is rather small, so that we have
\begin{align}\label{3.1.7}
|J_1|&\leq \Fi_{\{t_1\leq s\}}\Fi_{\{|v|\leq \f{d_{\Omega}}{nT}\}}e^{-\nu(v)(t-s)}\sup_{0\leq t\leq T}\|h^{i+1}(t)\|_{L^{\infty}}\nonumber\\
&\leq \Fi_{\{t_1\leq s\}}\Fi_{\{|v|\leq 1\}}e^{-\nu(v)(t-s)}\sup_{0\leq t\leq T}\|h^{i+1}(t)\|_{L^{\infty}}\notag\\
&\leq Ce^{-\nu_0(t+nT)}\sup_{0\leq t\leq T}\|h^{i+1}(t)\|_{L^{\infty}},
\end{align}
for the suitably large $n$, where for simplicity of notations we have still denoted the strictly positive constant $\nu_0>0$ to be the infimum of $\nu(v)$ over $|v|\leq 1$. For contributions coming from $g$ and $r$, we notice that
$$
\tilde{w}(v)=\frac{1}{\sqrt{2\pi}} \frac{e^{(\frac14-q)|v|^2}}{(1+|v|^2)^{\frac{\beta}{2}}},
$$
so it holds that
\begin{align}\label{3.1.8}
\frac{1}{\tilde{w}(v)}\leq \sqrt{2\pi} (1+|v|^2)^{\frac{\beta}{2}} e^{-(\frac14-q)|v|^2}\leq C e^{-\frac18|v|^2}.
\end{align}
Moreover, 
 we have
\begin{equation}\label{3.1.9}
\left\{\begin{aligned}
&\int_{\Pi _{j=1}^{k-1}{\mathcal{V}}_{j}} e^{\f{5|v_m|^2}{16}}\  \Pi_{j=1}^{k-1} \dd{\sigma}_j\leq C, 
\\
&\int_{\Pi _{j=1}^{k-1}{\mathcal{V}}_{j}} \sum_{l=1}^{k-1} \Fi_{\{{t}_{l+1}\leq s<{t}_l\}} e^{\f{5|v_m|^2}{16}}\ \Pi_{j=1}^{k-1} \dd{\sigma}_j\leq Ck,\\
&\int_{\Pi _{j=1}^{k-1}{\mathcal{V}}_{j}} \sum_{l=1}^{k-1} \Fi_{\{{t}_{l+1}> s\}} e^{\f{5|v_m|^2}{16}} \Pi_{j=1}^{k-1} \dd{\sigma}_j\leq Ck,
\end{aligned}\right.
\end{equation}
for all $1\leq m\leq k-1.$ Combining this with periodicity of $r$ and $g$,  we get that
\begin{equation}\label{3.1.10}
\left\{\begin{aligned}
&|J_4|+|J_{10}|+|J_{13}|\leq Ck\sup_{0\leq t\leq T}\|\nu^{-1}wg(t)\|_{L^{\infty}}, \\ 
&|J_5|+|J_6|\leq Ck\sup_{0\leq t\leq T}|wr(t)|_{L^{\infty}(\g_-)}.
\end{aligned}\right.
\end{equation}
Next, we shall estimate $J_7$. If $0\leq\gamma\leq 1$, we use the fact that $\nu(v)\geq\nu_0>0$ as well as  \eqref{3.1.8} {and \eqref{3.1.9}} to get
\begin{align}\label{3.1.11}
|J_7|&\leq C{e^{-\f18|v|^2}}e^{-\nu_0(t+nT)}\max_{1\leq l\leq k-1}\big\{\sup_{0\leq t\leq T}\|h^{i+1-l}(t)\|_{L^{\infty}}\big\}\notag \\
&\qquad\qquad\qquad\qquad\times\int_{\Pi _{j=1}^{k-1}{\mathcal{V}}_{j}} \sum_{l=1}^{k-1} \Fi_{\{{t}_{l+1}\leq s<{t}_l\}} \tilde{w}(v_l)\ \Pi_{j=1}^{k-1} \dd{\sigma}_j\nonumber\\
&\leq Ck{e^{-\f18|v|^2}}e^{-\nu_0(t+nT)}\max_{1\leq l\leq k-1}\big\{\sup_{0\leq t\leq T}\|h^{i+1-l}(t)\|_{L^{\infty}}\big\}.
\end{align}
If $-3<\gamma<0$, we again note that $\nu(v)$ no longer has a positive lower bound. In this case, it holds from Young's inequality that
$$
\nu(v)(\tau_1-\tau_2)+\f{|v|^2}{16}\geq c (\tau_1-\tau_2)^{\alpha},
$$
for any $\tau_1>\tau_2$, where we have taken
$
\alpha=\f{2}{2+|\gamma|},
$ and
$c>0$ is a constant independent of $\tau_1$, $\tau_2$ and $v$. In the sequel $c>0$ may take different values at different places.  So, from \eqref{3.1.8} we have
\begin{align}
\f{e^{-\nu(v)(t-t_1)}}{\tilde{w}(v)}\leq Ce^{-\f{|v|^2}{16}}e^{-c(t-t_1)^{\alpha}},\nonumber
\end{align}
and
\begin{align}
|J_7|\leq& Ce^{-\f{|v|^2}{16}}e^{-c(t-t_1)^{\alpha}}\max_{1\leq l\leq k-1}\big\{\sup_{0\leq t\leq T}\|h^{i+1-l}(t)\|_{L^{\infty}}\big\}\nonumber\\
&\times\sum_{l=1}^{k-1}\int_{\Pi_{j=1}^{l}{\mathcal{V}}_{j}}\Fi_{\{{t}_{l+1}\leq s<{t}_l\}}\tilde{w}(v_l)e^{-\nu(v_l)(t_l-s)}\dd\sigma_l\Pi_{j=1}^{l-1}e^{-\nu(v_j)(t_j-t_{j+1})}\dd \sigma_j.\nonumber
\end{align}
For each $l$, we take
$
|v_m|=\max\{|v_1|,\cdots,|v_l|\}.
$
Then it holds that
$$
\Pi_{j=1}^{l-1}e^{-\nu(v_j)(t_j-t_{j+1})}\times e^{-\nu(v_l)(t_l-s)}\tilde{w}(v_l)\\
\leq e^{-\nu(v_m)(t_1-s)}e^{\f{|v_m|^2}{4}}\leq e^{-c(t_1-s)^{\alpha}}e^{\f{5|v_m|^2}{16}}.
$$
Thus one has
\begin{align}\label{3.1.12}
|J_7|&\leq e^{-\f{|v|^2}{16}}e^{-c(t-
{s})^{\alpha}}\max_{1\leq l\leq k-1}\big\{\sup_{0\leq t\leq T}\|h^{i+1-l}(t)\|_{L^{\infty}}\big\}\nonumber\\
&\quad\times\sum_{l=1}^{k-1}\sum_{m=1}^l\int_{\Pi_{j=1}^{l}{\mathcal{V}}_{j}}\Fi_{\{{t}_{l+1}\leq s<{t}_l\}}e^{\f{5|v_m|^2}{16}}\Pi_{j=1}^l\dd \sigma_j\nonumber\\
&\leq Ck^2e^{-\f{|v|^2}{16}}e^{-c(t+nT)^{\alpha}}\max_{1\leq l\leq k-1}\big\{\sup_{0\leq t\leq T}\|h^{i+1-l}(t)\|_{L^{\infty}}\big\}.
\end{align}
Here we have used the elementary fact that $a^{\alpha}+b^{\alpha}\geq(a+b)^{\alpha}$ for $a$, $b\geq 0$ and $0\leq \alpha\leq 1$.
For $J_{14}$, it follows from \eqref{3.1.3} and \eqref{3.1.8} that
\begin{align}\label{3.1.13}
|J_{14}|\leq Ce^{-\f{|v|^2}{16}}\left(\f12\right)^{\hat{C}_2(nT)^{\f54}}\sup_{0\leq t\leq T}\|h^{i+1-k}(t)\|_{L^{\infty}}.
\end{align}
For the contribution from $K^m$, we use \eqref{2.2} to obtain
\begin{equation}\label{3.1.14}
|J_2|\leq{Cm^{3+\gamma}w(v)e^{-\f{1}{6}|v|^2}\sup_{0\leq t\leq T}\|h^{i}(t)\|_{L^{\infty}}}\leq Cm^{3+\gamma}e^{-\f{|v|^2}{48}}\sup_{0\leq {t}
\leq T}\|h^{i}(t)\|_{L^{\infty}}.
\end{equation}
Similarly, we use \eqref{2.2}, \eqref{3.1.8} and \eqref{3.1.9} to get
\begin{align}\label{3.1.15}
|J_8|\leq& Cm^{3+\gamma}e^{-\f{|v|^2}{8}}\max_{1\leq l\leq k-1}\{\sup_{0\leq {t}
\leq T}\|h^{i-l}(t)\|_{L^{\infty}}\}\nonumber\\
&\times\int_{\Pi _{j=1}^{k-1}{\mathcal{V}}_{j}} \sum_{l=1}^{k-1} \Fi_{\{{t}_{l+1}\leq s<{t}_l\}}\int_s^{t_l}e^{-\nu(v_l)(t_l-\tau)}\nu(v_l)\dd\tau \nu^{-1}(v_l)\tilde{w}(v_l)\ \Pi_{j=1}^{k-1} \dd{\sigma}_j\nonumber\\
\leq &Ckm^{3+\gamma}e^{-\f{|v|^2}{8}}\max_{1\leq l\leq k-1}\{\sup_{0\leq {t}
\leq T}\|h^{i-l}(t)\|_{L^{\infty}}\},
\end{align}
and
\begin{align}\label{3.1.16}
|J_{11}|\leq& Cm^{3+\gamma}e^{-\f{|v|^2}{8}}\max_{1\leq l\leq k-1}\{\sup_{0\leq {t}
\leq T}\|h^{i-l}(t)\|_{L^{\infty}}\}\nonumber\\
&\times\int_{\Pi _{j=1}^{k-1}{\mathcal{V}}_{j}} \sum_{l=1}^{k-1} \Fi_{\{{t}_{l+1}>s\}} \int_{t_{l+1}}^{t_l}e^{-\nu(v_l)(t_l-\tau)}\nu(v_l)\dd \tau \nu^{-1}(v_l)\tilde{w}(v_l)\ \Pi_{j=1}^{k-1} \dd{\sigma}_j\nonumber\\
\leq &Ckm^{3+\gamma}e^{-\f{|v|^2}{8}}\max_{1\leq l\leq k-1}\{\sup_{0\leq {t}
\leq T}\|h^{i-l}(t)\|_{L^{\infty}}\}.
\end{align}
It remains to estimate the terms involving $K^c$. Firstly, we have
\begin{align}\label{3.1.17}
|J_9| &\leq  C e^{-\f18|v|^2} \sum_{l=1}^{k-1}\int_{\Pi _{j=1}^{l-1}{\mathcal{V}}_{j}} \dd{\sigma}_{l-1}\cdots \dd{\sigma}_1 \int_{\mathcal{V}_l}\int_{\mathbb{R}^3} \int_s^{{t}_l} e^{-\nu(v_l)(t-\tau)} \Fi_{\{{t}_{l+1}\leq s<{t}_l\}} \nonumber\\
&\qquad\qquad\qquad\qquad\times \tilde{w}(v_l) |k^c_w(v_l,v') h^{i-l}(\tau,{x}_l-{{v_l}}({t}_l-\tau),v')|\dd \tau\dd v' \dd{\sigma}_l \nonumber\\
&=C e^{-\f18|v|^2} \sum_{l=1}^{k-1}\int_{\Pi _{j=1}^{l-1}{\mathcal{V}}_{j}} \dd{\sigma}_{l-1}\cdots \dd{\sigma}_1 \int_{\mathcal{V}_l\cap \{|v_l|\geq N\}}\int_{\mathbb{R}^3}\int_s^{{t}_l} (\cdots)\dd \tau\dd v' \dd{\sigma}_l \nonumber\\
&\quad+C e^{-\f18|v|^2} \sum_{l=1}^{k-1}\int_{\Pi _{j=1}^{l-1}{\mathcal{V}}_{j}} \dd{\sigma}_{l-1}\cdots \dd{\sigma}_1 \int_{\mathcal{V}_l\cap \{|v_l|\leq N\}}\int_{\mathbb{R}^3} \int_s^{{t}_l}  (\cdots)\dd \tau\dd v' \dd{\sigma}_l\nonumber\\
&:=\sum_{l=1}^{k-1} (J_{91l}+J_{92l}).
\end{align}
For $J_{91l}$, we use \eqref{3.1.9} to obtain that
\begin{align}\label{3.1.18}
\sum_{l=1}^{k-1}J_{91l}&\leq Ck e^{-\f18|v|^2}\max_{1\leq l\leq k-1}\bigg\{\sup_{0\leq t\leq T}\|h^{i-l}(t)\|_{L^\infty}\int_{\Pi _{j=1}^{l-1}{\mathcal{V}}_{j}} \dd{\sigma}_{l-1}\cdots \dd{\sigma}_1
 \nonumber\\
&\qquad\times\int_{\mathcal{V}_l\cap \{|v_l|\geq N\}} \int_s^{{t}_l} e^{-{\nu}(v_l)(t_l-\tau)}\nu(v_l)
{e^{-\f{|v_l|^2}{32}}\dd\tau e^{\f{5|v_l|^2}{16}}\dd\sigma_l}\bigg\}\nonumber\\
&\leq Ck e^{-\f18|v|^2} e^{-\frac1{32}N^2}\max_{1\leq l\leq k-1}\big\{\sup_{0\leq t\leq T}\|h^{i-l}(t)\|_{L^\infty}\big\}.
\end{align}
For $J_{92l}$, it holds that
\begin{align}
J_{92l}&\leq C e^{-\f18|v|^2} \int_{\Pi _{j=1}^{l-1}{\mathcal{V}}_{j}} \Pi _{j=1}^{l-1}\dd{\sigma}_{j}
\int_{\mathcal{V}_l\cap \{|v_l|\leq N\}}\int_{\mathbb{R}^3} \int_{{t}_l-\frac1N}^{{t}_l}(\cdots)\dd \tau\dd v' \dd{\sigma}_l\nonumber\\
&\quad+C e^{-\f18|v|^2} \int_{\Pi _{j=1}^{l-1}{\mathcal{V}}_{j}} 
\Pi _{j=1}^{l-1}\dd{\sigma}_{j}
\int_{\mathcal{V}_l\cap \{|v_l|\leq N\}} \int_s^{{t}_l-\frac1N} e^{-\nu(v_l)(t_l-\tau)} e^{-\f18|v_l|^2}\dd\tau \dd v_l\nonumber\\
&\qquad\quad\times \int_{|v'|\geq 2N}  |k_w^c(v_l,v')| e^{\frac{|v_l-v'|^2}{64}} \dd v' e^{-\frac{N^2}{64}}\cdot \max_{1\leq l\leq k-1}\{\sup_{0\leq t\leq T}\|h^{i-l}(t)\|_{L^\infty}\}\nonumber\\
&\quad+C e^{-\f18|v|^2} \int_{\Pi _{j=1}^{l-1}{\mathcal{V}}_{j}} 
\Pi _{j=1}^{l-1}\dd{\sigma}_{j}
\int_s^{{t}_l-\frac1N} \dd \tau\int_{\mathcal{V}_l\cap \{|v_l|\leq N\}}\int_{|v'|\leq 2N} \nonumber\\
&\qquad\quad\times  \Fi_{\{{t}_{l+1}\leq s<{t}_l\}} e^{-\frac18|v_l|^2} |k^c_w(v_l,v') h^{i-l}(\tau,x_l-{v_l}
({t}_l-{\tau}
),v')|\dd v' \dd v_l. \nonumber
\end{align}
Then, by \eqref{2.7} we have
\begin{align}
J_{92l}
&\leq C e^{-\f18|v|^2} \int_{\Pi _{j=1}^{l-1}{\mathcal{V}}_{j}} 
\Pi _{j=1}^{l-1}\dd{\sigma}_{j}
\bigg\{\int_s^{{t}_l-\frac1N}\int_{\mathcal{V}_l\cap \{|v_l|\leq N\}}\int_{|v'|\leq 2N} \nonumber\\
&\qquad\quad\times  \Fi_{\{{t}_{l+1}\leq s<{t}_l\}} e^{-\frac18|v_l|^2} |k^c_w(v_l,v') h^{i-l}(\tau,x_l-{v_l}
({t}_l-{\tau}
),v')|\dd v' \dd v_l\dd\tau\bigg\}\nonumber\\
&\qquad+\frac{C}{N}e^{-\f18|v|^2}\cdot\max_{1\leq l\leq k-1}\{\sup_{0\leq t\leq T}\|h^{i-l}{(t)}\|_{L^\infty}\}.
\label{3.1.19}
\end{align}
By H\"older's inequality, the integral term on the right-hand of \eqref{3.1.19} 
\begin{equation}
\label{add.pf1}
\int_s^{{t}_l-\frac1N}\int_{\mathcal{V}_l\cap \{|v_l|\leq N\}}\int_{|v'|\leq 2N}(\cdots)\, \dd v' \dd v_l\dd\tau:=\iiint_{\CD}(\cdots)\, \dd v' \dd v_l\dd\tau
\end{equation}
is bounded  by
\begin{align}
&C_N\left\{
\iiint_{\CD}
e^{-\frac18|v_l|^2} |k^c_w(v_l,v')|^2 \dd v' \dd v_l \dd\tau\right\}^{1/2}\nonumber\\
&\quad \times \left\{
\iiint_{\CD}
\Fi_{\{{t}_{l+1}\leq s<{t}_l\}} \left|\frac{h^{i-l}(\tau,{x}_l-{v_l}
({t}_l-\tau),v')}{\langle v'\rangle^{|\gamma|}w(v')}\right|^2 \dd v' \dd v_l \dd\tau\right\}^{1/2}\nonumber\\
&\leq C_Nn^{1/2} m^{\gamma-1}\bigg\{
\iiint_{\CD}
\Fi_{\{{t}_{l+1}\leq s<{t}_l\}} \left|\frac{h^{i-l}(\tau,{x}_l-{v_l}
({t}_l-\tau),v')}{\langle v'\rangle^{|\gamma|} w(v')}\right|^2 \dd v' \dd v_l\dd\tau \bigg\}^{1/2}.\nonumber
\end{align}
Here we have used \eqref{2.5} in the last inequality. Note that $y_l:={x}_l-{v_l}
({t}_l-\tau)\in \Omega$ for $s\leq \tau\leq t_l-\f1N$. Making change of variables $v_l\rightarrow y_l$, we obtain that \eqref{add.pf1} is bounded by
\begin{align}
 C_Nn^{1/2}m^{\gamma-1}\left\{\int_s^{t_l}\left\|\f{h^{i-l}(\tau)}{\langle v\rangle^{|\gamma|}w}\right\|_{L^2}^2\dd\tau\right\}^{1/2}.\nonumber
\end{align}
We  use periodicity of $h^{i-l}$ to further bound the above term by    
$$
C_Nn^{1/2}m^{\gamma-1}\bigg\{\int_s^T\left\|\f{h^{i-l}(\tau)}{\langle v\rangle^{|\gamma|}w}\right\|_{L^2}^2\dd\tau\bigg\}^{1/2}\leq C_N nm^{\gamma-1}\left\|\f{h^{i-l}}{\langle v\rangle^{|\gamma|}w}\right\|_{L^2([0,T];L^2)}.
$$
Combining this with \eqref{3.1.17}, \eqref{3.1.18} and \eqref{3.1.19}, we get
\begin{align}\label{3.1.20}
|J_9|\leq& \f{Ck}{N}e^{-\f{|v|^2}{8}}\max_{1\leq l\leq k-1 }\{\sup_{0\leq t\leq T}\|h^{i-l}(t)\|_{L^{\infty}}\}\nonumber\\
&+
{C_{N,n,m}}e^{-\f{|v|^2}{8}}\max_{1\leq l\leq k-1}\left\{\left\|\f{h^{i-l}}{\langle v\rangle^{|\gamma|}w}\right\|_{L^2([0,T];L^2)}\right\}.
\end{align}
Similarly, for $J_{12}$ one has
\begin{align}\label{3.1.21}
|J_{12}|\leq& \f{Ck}{N}e^{-\f{|v|^2}{8}}\max_{1\leq l\leq k-1 }\{\sup_{0\leq t\leq T}\|h^{i-l}(t)\|_{L^{\infty}}\}\nonumber\\
&+
{C_{N,n,m}}e^{-\f{|v|^2}{8}}\max_{1\leq l\leq k-1}\left\{\left\|\f{h^{i-l}}{\langle v\rangle^{|\gamma|}w}\right\|_{L^2([0,T];L^2)}\right\}.
\end{align}
Collecting all estimates \eqref{3.1.6}, \eqref{3.1.7}, \eqref{3.1.10}, \eqref{3.1.11}, \eqref{3.1.12}, \eqref{3.1.13}, \eqref{3.1.14}, \eqref{3.1.15}, \eqref{3.1.16}, \eqref{3.1.20} and \eqref{3.1.21}, we get that for $t\in[0,T]$,
\begin{multline}\label{3.1.22}
|h^{i+1}(t,x,v)|\leq \int_{\max\{t_1,s\}}^te^{-\nu(v)(t-\tau)}\int_{\mathbb{R}^3}|k^c_w(v,v')h^i(\tau,x-(t-\tau)v,v')|\dd v'\dd \tau\\
+A_i(t,v),
\end{multline}
where we have denoted
\begin{align}
A_i(t,v):=&C{k^2}e^{-\f{|v|^2}{48}}\bigg\{m^{3+\gamma}+e^{-c(t+nT)^{\alpha}}\notag\\
&\qquad\qquad\quad +2^{-\hat{C}_2(nT)^{\f54}}+\f1N\bigg\}\
\max_{0\leq l\leq k-1}\{\sup_{0\leq t\leq T}\|h^{i-l}(t)\|_{L^{\infty}}\}\nonumber\\
&+Ce^{-c(t+nT)}\sup_{0\leq t\leq T}\|h^{i+1}(t)\|_{L^{\infty}}\notag \\
&+Ck\left\{\sup_{0\leq t\leq T}\|\nu^{-1}wg(t)\|_{L^{\infty}}+\sup_{0\leq t\leq T}|wr(t)|_{L^{\infty}(\g_-)}\right\}\nonumber\\
&+{C_{N,n,m}}e^{-\f{|v|^2}{8}}\max_{0\leq l\leq k-1}\left\{\left\|\f{h^{i-l}}{\langle v\rangle^{|\gamma|}w}\right\|_{L^{2}([0,T];L^2)}\right\},\nonumber
\end{align}
and
\begin{equation}
\label{def.kno}
k=\hat{C}_1(nT)^{\f54}\sim (nT)^{\f{5}{4}}.
\end{equation}
Denoting $x':=x-(t-\tau)v$ and $t_1':=t_1(\tau,x',v')$, we use \eqref{3.1.22} for $h^{i}(\tau,x',v')$ to evaluate
\begin{align}\label{3.1.23}
|h^{i+1}(t,x,v)|\leq &A_i(t,v)+\int_{\max\{t_1,s\}}^te^{-\nu(v)(t-\tau)}\int_{\mathbb{R}^3}|k^c_w(v,v')A_{i-1}(\tau,v')\dd v'\dd \tau\nonumber\\
&+\int_{\max\{t_1,s\}}^te^{-\nu(v)(t-\tau)}\dd\tau\int_{\mathbb{R}^3}\int_{\max\{t_1',s\}}^\tau
\int_{\mathbb{R}^3}\bigg\{e^{-\nu(v')(\tau-\tau')}\nonumber\\
&\quad\times|k^c_w(v,v')k^c_w(v',v'')h^{i-1}(\tau',x'-(\tau-\tau')v',v'')|\bigg\}\dd v'' \dd\tau'\dd v'\nonumber\\
=&A_i(t,v)+B_1+B_2,
\end{align}
where $B_1$ and $B_2$ denote two integral terms on the right-hand respectively.
It follows from \eqref{2.8} that 
\begin{align}\label{3.1.24}
B_1\leq &C{k^2}\left\{m^{\gamma-1}e^{-c nT}+m^{3+\gamma}+e^{-c(t+nT)^{\alpha}}
+2^{-\hat{C}_2(nT)^{\f54}}+\f1N\right\}\notag\\
&\qquad\times\max_{0\leq l\leq k}
\{\sup_{0\leq t\leq T}\|h^{i-l}(t)\|_{L^{\infty}}\}\nonumber\\
&+Ck m^{\gamma-1}\left\{\sup_{0\leq t\leq T}\|\nu^{-1}wg(t)\|_{L^{\infty}}+\sup_{0\leq t\leq T}\|wr(t)\|_{L^{\infty}(\gamma_-)}\right\}\nonumber\\
&+{C_{N,n,m}}\max_{0\leq l\leq k}\left\{\left\|\f{h^{i-l}}{\langle v \rangle^{|\gamma|}w}\right\|_{L^{2}([0,T];L^2)}\right\}.
\end{align}
Finally, we estimate $B_2$. If $|v|>N$, we have from \eqref{2.8} that
\begin{align}\label{3.1.25}
B_2\leq Cm^{2(\gamma-1)}(1+|v|)^{-2}\sup_{0\leq t\leq T}\|h^{i-1}(t)\|_{L^{\infty}}\leq C\f{m^{2(\gamma-1)}}{N^2} \sup_{0\leq t\leq T}\|h^{i-1}(t)\|_{L^{\infty}}.
\end{align}
If $|v|\leq N$, we denote the integrand of $B_2$
as $U(\tau',v',v'';\tau,v)$, and split the integral domain 
 with respect to $\dd\tau'\dd v''\dd v'$ into the following four parts:
\begin{align*}
\cup_{i=1}^4\mathcal{O}_i:=&\{|v'|\geq 2N\}
\cup\{|v'|\leq 2N, |v''|>3N\}\\
&\cup\{|v'|\leq 2N, |v''|\leq 3N, \tau-\f1N\leq \tau'\leq \tau\}\nonumber\\
&\cup\{|v'|\leq 2N, |v''|\leq 3N, \max\{t_1',s\}\leq \tau'\leq \tau-\f1N\}.
\end{align*}
Over $\mathcal{O}_1\cup\mathcal{O}_2$, we have either $|v-v'|\geq N$ or $|v'-v''|\geq N $, so that one of the following is valid:
\begin{equation*}
|k^c_w(v,v')|\leq e^{-\f{N^2}{64}}e^{\f{|v-v'|^2}{64}}|k^{c}_w(v,v')|
\ \text{or}\ 
|k^c_w(v',v'')|\leq e^{-\f{N^2}{64}}e^{\f{|v'-v''|^2}{64}}{|k^c_w(v',v'')|.}
\end{equation*}
Recall \eqref{2.5}. Then it holds that
\begin{equation*}
\int_{\mathbb{R}^3}|k^c_w(v,v')|e^{\f{|v-v'|^2}{64}}\dd v'\leq Cm^{\gamma-1}\nu(v),
\end{equation*}
or 
$$
\int_{\mathbb{R}^3}|k^c_w(v',v'')|e^{\f{|v'-v''|^2}{64}}\dd v''\leq Cm^{\gamma-1}\nu(v').
$$
Therefore one has
\begin{multline}\label{3.1.26}
\int_{\max\{t_1,s\}}^te^{-\nu(v)(t-\tau)}\int_{\mathcal{O}_1\cup\mathcal{O}_2}U(\tau',v',v'';\tau,v)\dd v''\dd\tau'\dd v'\dd \tau\\
\leq Cm^{2(\gamma-1)}e^{-\f{N^2}{64}}\sup_{0\leq t\leq T}\|h^{i-1}(t)\|_{L^{\infty}}.
\end{multline}
Over $\mathcal{O}_3$, it is direct to obtain
\begin{multline}\label{3.1.27}
\int_{\max\{t_1,s\}}^te^{-\nu(v)(t-\tau)}\int_{\mathcal{O}_3}U(\tau',v',v'';\tau,v)\dd v''\dd\tau'\dd v'\dd \tau\\
\leq C\f{m^{2(\gamma-1)}}{N}\sup_{0\leq t\leq T}\|h^{i-1}(t)\|_{L^{\infty}}.
\end{multline}
For $\mathcal{O}_4$, we have, from \eqref{2.5}, that
\begin{align}\label{3.1.28}
&\int_{\mathcal{O}_4}U(\tau',v',v'';\tau,v)\dd v''\dd\tau'\dd v'\nonumber\\
&\leq C_N\bigg\{
\int_{\mathcal{O}_4}
|k^c_w(v,v')k^c_w(v',v'')|^2\dd v'\dd v''\dd\tau'\bigg\}^{\f12}\nonumber\\
&\qquad\qquad\times\bigg\{
\int_{\mathcal{O}_4}
\Fi_{\{\max\{t_1',s\}\leq \tau'\leq \tau\}}\left|\f{h^{i-1}(\tau',y',v'')}{\langle v''\rangle^{|\gamma|} w(v'')}\right|^2\dd v'\dd v''\dd\tau'\bigg\}^{\f12}\nonumber\\
&\leq {C_{N,n,m}}\bigg\{
\int_{\mathcal{O}_4}
\Fi_{\{\max\{t_1',s\}\leq \tau'\leq \tau\}}\left|\f{h^{i-1}(\tau',y',v'')}{\langle v''\rangle^{|\gamma|} w(v'')}\right|^2\dd v'\dd v''\dd\tau'\bigg\}^{\f12},
\end{align}
where we have denoted $y':=y-(\tau-\tau')v'$. Making change of variable $v'\rightarrow y'$, the right-hand side  of \eqref{3.1.28} is further bounded by
$$
C_{N,n,m}\left\{\int_s^T\left\|\f{h^{i-1}(\tau')}{\langle v \rangle^{|\gamma|} w}\right\|^2_{L^2}\dd\tau'\right\}^{1/2}
\leq {C_{N,n,m}}\left\|\f{h^{i-1}}{\langle v\rangle^{|\gamma|} w}\right\|_{L^2([0,T],L^2)}.
$$
Then it holds that
\begin{align}
\int_{\max\{t_1,s\}}^t\int_{\mathcal{O}_4}U(\tau',v',v'';\tau,v)\dd v''\dd\tau'\dd v'\dd\tau\leq {C_{N,n,m}}
\left\|\f{h^{i-1}}{\langle v\rangle^{|\gamma|} w}\right\|_{L^2([0,T],L^2)}.\nonumber
\end{align}
The above estimate
together with \eqref{3.1.25}, \eqref{3.1.26} and \eqref{3.1.27} yield that
\begin{align}
B_2\leq \f{Cm^{2(\gamma-1)}}{N}\sup_{0\leq t\leq T}\|h^{i-1}(t)\|_{L^{\infty}}+{C_{N,n,m}}
\left\|\f{h^{i-1}}{\langle v\rangle^{|\gamma|} w}\right\|_{L^2([0,T],L^2)}.\nonumber
\end{align}
Combining this with \eqref{3.1.23} and \eqref{3.1.24}, we get, for $t\in [0,T]$, that
\begin{align}\label{3.1.29}
|h^{i+1}(t,x,v)|\leq &Ce^{-c nT}\sup_{0\leq t\leq T}\|h^{i+1}(t)\|_{L^{\infty}}+\eta\max_{0\leq l\leq k}\{\sup_{0\leq t\leq T}\|h^{i-l}(t)\|_{L^{\infty}}\}\nonumber\\
&+C{n^{5/4}}m^{\gamma-1}\{\sup_{0\leq t\leq T}\|\nu^{-1}wg(t)\|_{L^{\infty}}+\sup_{0\leq t\leq T}|wr(t)|_{L^{\infty}(\gamma_-)}\}\nonumber\\
&+
{C_{N,n,m}}
\sup_{0\leq l\leq k}\left\|\f{h^{i-l}}{\langle v \rangle^{|\gamma|} w}\right\|_{L^{2}([0,T];L^2)},
\end{align}
where we have denoted
$$
\eta:=C{n^{5/2}}\bigg\{m^{\gamma-1}e^{-c nT}+m^{3+\gamma}+e^{-c (nT)^{\alpha}}+\left(\f{1}{2}\right)^{\hat{C}_2(nT)^{\f54}}+\f{m^{2(\gamma-1)}}{N}\bigg\}.
$$
We now take
$$
m=\left(\f{1}{32C}\right)^{\f{1}{3+\gamma}}{n^{-\f{5}{2(3+\gamma)}}},
$$
choose $n$ suitably large, and then choose $N$ large enough, so that it holds that
$$
{C}e^{-c nT}\leq \f12,\quad
\eta\leq \f{1}{16}.
$$
Then we obtain \eqref{3.1.4} from \eqref{3.1.29}. Finally, \eqref{3.1.5} directly follows from \eqref{3.1.4}. Therefore, the proof of Proposition \ref{prop3.1} is complete.
\end{proof}

\subsection{Approximation solutions}
It is very delicate to make the construction of approximation solutions. For readers' convenience, we first outline the procedure by four steps as follows.

\medskip
\noindent\underline{Step 1}. Construct the solution $f^{{j}
,\vep}$ to the following time-periodic problem:
\begin{equation}\label{3.2.1}\left\{
\begin{aligned}
&\pa_tf^{{j}
,\vep}+v\cdot\nabla_x f^{{j},\vep}+(\vep+\nu(v))f^{{j},\vep}=g,\\
&f^{{j},\vep}(t,x,v)|_{\g_-}=(1-\f1{{j}})P_{\g}f^{{j},\vep}+r.
\end{aligned}\right.
\end{equation}

\medskip
\noindent\underline{Step 2}. Construct the solution $f^{\vep}$ to the following time-periodic problem:
\begin{equation}\label{3.2.2}\left\{
\begin{aligned}
&\pa_tf^{\vep}+v\cdot\nabla_x f^{\vep}+(\vep+\nu(v))f^{\vep}=g,\\
&f^{\vep}(t,x,v)|_{\g_-}=P_{\g}f^\vep+r{,}
\end{aligned}\right.
\end{equation}
by passing to the limit ${j}\rightarrow\infty$.

\medskip
\noindent\underline{Step 3}. Make the uniform-in-$\lambda$ a priori estimates on the solution $f^{\lambda,\vep}$ to the following time-periodic problem:
\begin{equation}\label{3.2.3}\left\{
\begin{aligned}
&\pa_tf^{\lambda,\vep}+v\cdot\nabla_x f^{\lambda,\vep}+(\vep+\nu(v))f^{\lambda,\vep}=\lambda Kf^{\lambda,\vep}+g,\\
&f^{\lambda,\vep}(t,x,v)|_{\g_-}=P_{\g}f^{\lambda,\vep}+r,
\end{aligned}\right.
\end{equation}
and bootstrap from $\lambda=0$ to $\lambda=1$. Then the solution $f^{\vep}$ to
\begin{equation}\label{3.2.4}\left\{
\begin{aligned}
&\pa_tf^{\vep}+v\cdot\nabla_x f^{\vep}+(\vep+\nu(v))f^{\vep}=Kf^{\vep}+g,\\
&f^{\vep}(t,x,v)|_{\g_-}=P_{\g}f^{\vep}+r,
\end{aligned}\right.
\end{equation}
is therefore constructed. We remark that the zero-mass condition \eqref{3.0.3} is not necessary up to the present step.

\medskip
\noindent\underline{Step 4}. Take the limit $\vep\rightarrow 0.$ Note that in the limit process, the artificial damping term guarantees that the following key zero-mass condition
\begin{align}\label{3.2.5}
\int_{\Omega}\int_{\mathbb{R}^3}\pa_tf^{\vep}(t,x,v)\sqrt{\mu(v)}\dd v\dd x=\int_{\Omega}\int_{\mathbb{R}^3}f^{\vep}(t,x,v)\sqrt{\mu(v)}\dd v\dd x=0,
\end{align}
holds true for any $t\in \mathbb{R}$. In fact, let
$$
\rho^{\vep}(t):=\int_{\Omega}\int_{\mathbb{R}^3}f^{\vep}(t,x,v)\sqrt{\mu(v)}\dd v\dd x.
$$
Taking the inner product of \eqref{3.2.4} with $\sqrt{\mu(v)}$ over $\Omega\times\mathbb{R}^3$ and using the zero-mass condition \eqref{3.0.3}, we get
$$
\f{\dd \rho^{\vep}}{\dd t}+\vep\rho^{\vep}=0.
$$
Since $\rho^{\vep}(t)$ is periodic in time, we then obtain $\rho^{\vep}(t)\equiv0$. 

\medskip
In what follows, we will proceed the proof along the way mentioned above. The first lemma is related to the issue stated in Step 1. For the choice of ${j}$ in the second line of \eqref{3.2.1}, one can fix ${j_0}>1$ to be large enough such that
$$
\frac18 \left(1-\frac2{{j}}+\frac{3}{2{j}^2}\right)^{-\frac{k+1}{2}}\leq \frac12
$$
holds true for any ${j}\geq {j_0}$, where $k\sim (nT)^{5/4}$ is defined in \eqref{def.kno}. Then we only consider ${j}\geq {j_0}$ in the problem \eqref{3.2.1}.

\begin{lemma}\label{lem3.2.1}
Let $-3<\gamma\leq 1$, $\v>0$, {$0\leq q<1/8$ }and $\beta>3$.  Assume that $g$ and $r$ are time-periodic functions with period $T>0$ and satisfy
$$
\sup_{0\leq t\leq T}\|\nu^{-1}wg(t)\|_{L^\infty}+\sup_{0\leq t\leq T}|wr(t)|_{L^\infty(\g_-)}<\infty.
$$
Then there exists a unique solution $f^{{j},\vep}$ to \eqref{3.2.1}, which is time-periodic with period $T$, and satisfies
\begin{multline}\label{3.2.6}
\sup_{0\leq t \leq T}\|wf^{{j},\vep}(t)\|_{L^\infty}+\sup_{0\leq t\leq T}|wf^{{j},\vep}(t)|_{L^\infty{(\g)}}\\
\leq C_{\v,{j}}\Big( \sup_{0\leq t\leq T}|wr(t)|_{L^\infty(\gamma-)}+\sup_{0\leq t\leq T}\|\nu^{-1}wg(t)\|_{L^\infty} \Big),
\end{multline}
where the positive constant $C_{\v,{j}}>0$ depends only on $\v$ and ${j}$. Moreover, if  the domain $\Omega$ is convex, $g$ is continuous in 
$\mathbb{R}\times\Omega\times \mathbb{R}^3$, and $r$ is continuous {in $\mathbb{R}\times\g_-$,} 
then the solution $f^{{j},\vep}(t,x,v)$ is also continuous away from the grazing set $\mathbb{R}\times\gamma_0$.
\end{lemma}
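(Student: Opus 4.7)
The plan is to construct $f^{j,\varepsilon}$ as the limit of the Picard-type iteration
\begin{equation*}
\begin{cases}
\partial_t f^{j,\varepsilon,i+1}+v\cdot\nabla_x f^{j,\varepsilon,i+1}+(\varepsilon+\nu(v))f^{j,\varepsilon,i+1}=g,\\
f^{j,\varepsilon,i+1}(t,x,v)\big|_{\gamma_-}=(1-\tfrac1j)P_\gamma f^{j,\varepsilon,i}+r,
\end{cases}
\end{equation*}
started from $f^{j,\varepsilon,0}\equiv 0$. Since there is no collision operator $K$ on the right-hand side, each iterate is given explicitly along backward characteristics: assuming inductively that $f^{j,\varepsilon,i}$ is $T$-periodic in $t$, both the source $g$ and the incoming data $(1-\tfrac1j)P_\gamma f^{j,\varepsilon,i}+r$ are periodic, and the formula
\begin{equation*}
f^{j,\varepsilon,i+1}(t,x,v)=e^{-(\varepsilon+\nu)t_{\mathbf b}}\bigl[(1-\tfrac1j)P_\gamma f^{j,\varepsilon,i}+r\bigr](t-t_{\mathbf b},x_{\mathbf b},v)+\int_{t-t_{\mathbf b}}^t e^{-(\varepsilon+\nu)(t-\tau)}g(\tau,x-(t-\tau)v,v)\,d\tau
\end{equation*}
defines a $T$-periodic $f^{j,\varepsilon,i+1}$ for $v\neq 0$.

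Next, I would establish a uniform-in-$i$ $L^\infty$ bound on $h^{i+1}:=wf^{j,\varepsilon,i+1}$. Iterating the above representation $k=\hat C_1(nT)^{5/4}$ times yields exactly the cycle decomposition of \eqref{3.1.2}, but with the $K$-terms set to zero and with each reflection at $\gamma_-$ carrying an additional factor $(1-\tfrac1j)$. Pushing the backward time down to $s=-nT$ via periodicity, three smallness mechanisms combine: (i) the $\varepsilon$-damping supplies $e^{-\varepsilon nT}$ on the piece $\{t_1\le s\}$ that involves $h^{i+1}$ itself; (ii) Lemma~\ref{lem.smbd} bounds the measure of $\{t_k>s\}$ by $(\tfrac12)^{\hat C_2(nT)^{5/4}}$; and (iii) the accumulated factor $(1-\tfrac1j)^l$ from $l$ successive reflections controls the intermediate iterates. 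Selecting $n$ large enough depending on $(\varepsilon,j)$ and invoking the assumption $j\geq j_0$ from the paragraph preceding the lemma yields
\begin{equation*}
\sup_{0\le t\le T}\|h^{i+1}(t)\|_{L^\infty}\le\tfrac18\max_{0\le l\le k}\sup_{0\le t\le T}\|h^{i-l}(t)\|_{L^\infty}+C_{\varepsilon,j}\Bigl(\sup_{0\le t\le T}|wr(t)|_{L^\infty(\gamma_-)}+\sup_{0\le t\le T}\|\nu^{-1}wg(t)\|_{L^\infty}\Bigr),
\end{equation*}
so that the iteration lemma of Section~2, part (i), delivers the uniform bound \eqref{3.2.6}.

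Convergence and uniqueness both reduce to applying the same cycle estimate to differences $f^{j,\varepsilon,i+1}-f^{j,\varepsilon,i}$ (resp.\ to the difference of any two time-periodic solutions), which satisfy the homogeneous version ($g=r=0$) of the iteration and therefore decay geometrically in the weighted $L^\infty$ norm. Evaluating the explicit characteristic formula on $\partial\Omega\times\mathbb R^3$ gives the boundary trace bound $\sup_t|wf^{j,\varepsilon}(t)|_{L^\infty(\gamma)}\lesssim C_{\varepsilon,j}(\cdots)$. When $\Omega$ is convex, the backward cycle $(t_l,x_l,v_l)$ depends continuously on $(t,x,v)$ away from $\mathbb R\times\gamma_0$, and the explicit formula then transfers continuity of $g$ and $r$ to $f^{j,\varepsilon}$ on $\mathbb R\times(\bar\Omega\times\mathbb R^3\setminus\gamma_0)$.

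I expect the principal obstacle to be the calibration of $k$. The reflection kernel produces a factor $1/\tilde w(v)=\sqrt{2\pi}(1+|v|^2)^{\beta/2}e^{-(\tfrac14-q)|v|^2}$ that is only tamed by the Gaussian weight carried into $d\sigma$, and the constant $C_{\varepsilon,j}$ generated in the iteration inequality grows with $k$. Thus $k$ must be chosen simultaneously large enough to drive the geometric factor below $\tfrac18$ and small enough to keep the forcing term finite; this is precisely the purpose of the algebraic condition on $j_0$ stated before the lemma and the scaling $k\sim(nT)^{5/4}$.
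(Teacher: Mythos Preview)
Your overall strategy---iterate on the boundary data, write each iterate via characteristics, then close in $L^\infty$ using the cycle decomposition \eqref{3.1.2} with the $K$-terms absent---is sound and is in fact \emph{simpler} than what the paper does. The paper takes a different route: it first obtains a uniform $L^2$ energy estimate by exploiting the contraction factor $(1-\tfrac1j)$ on the outgoing boundary flux (estimates \eqref{3.2.12}--\eqref{3.2.14}), and only then feeds this $L^2$ bound into the general $L^\infty$ inequality \eqref{3.1.4} of Proposition~\ref{prop3.1}, which carries an $L^2$ remainder term. Your observation that this $L^2$ term simply vanishes when $\lambda=0$ is correct, so the detour through $L^2$ is not strictly needed here; the paper uses it because Proposition~\ref{prop3.1} is already set up as a black box.

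There is, however, a genuine error in your list of smallness mechanisms. Mechanism (iii)---the accumulated factor $(1-\tfrac1j)^l$---does \emph{not} control the intermediate iterates coming from $J_7$: for $l=1,2,\dots$ this factor is uniformly close to $1$ when $j$ is large, so it provides no smallness. The correct smallness for $J_7$ is the same as for $J_1$, namely the exponential damping accumulated along the whole trajectory from $s=-nT$ to $t$; see the paper's estimates \eqref{3.1.11}--\eqref{3.1.12}, which give $|J_7|\lesssim k^2 e^{-c(nT)^\alpha}\max_l\|h^{i+1-l}\|_{L^\infty}$ using $\nu$ (or $\varepsilon$) alone. Once you replace (iii) by this, your contraction inequality follows for $n$ large depending only on $T$ (and $\varepsilon$ if you use the $\varepsilon$-damping), independently of $j$.

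Relatedly, your appeal to the $j_0$ condition is misplaced. That algebraic condition $\tfrac18\,\eta_j^{-(k+1)}\le\tfrac12$ is used by the paper only to reconcile the $L^2$ geometric rate $\eta_j^i$ with part (ii) of the iteration lemma when proving Cauchy in $L^\infty$ (see \eqref{3.2.18}--\eqref{3.2.19}). In your pure-$L^\infty$ scheme the Cauchy estimate for $h^{i+2}-h^{i+1}$ has zero forcing, so part (i) of the iteration lemma with $D=0$ suffices and no condition on $j$ is required.
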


\begin{proof}
For given $\eps>0$ and ${j}\geq {j_0}$, we shall construct the solution to \eqref{3.2.1}. To do so, we consider the  approximation sequence $\{f^i(t,x,v)\}_{i=0}^\infty$ iteratively solved by
\begin{equation}\label{3.2.7}
\left\{\begin{aligned}
&\pa_tf^{i+1}+v\cdot\nabla_x f^{i+1}+(\vep+\nu(v))f^{i+1}=g,\\
&f^{i+1}(t,x,v)|_{\g_-}=(1-\f1{{j}})P_{\g}f^{i}+r,
\end{aligned}\right.
\end{equation}
with  $f^0\equiv0$.
Here we have dropped $\vep$ and ${j}$ for brevity. Indeed, the solution to \eqref{3.2.7} can be constructed by the method of characteristics. 
Let
$$
h^{i+1}(t,x,v)=w(v)f^{i+1}(t,x,v).
$$
Then for any $t\in \mathbb{R}$ and {almost every} $(x,v)\in \b{\Omega}\times \mathbb{R}^3\setminus(\gamma_0\cup\gamma_-)$, one can write
\begin{align}\label{3.2.8}
h^{i+1}(t,x,v)=&e^{-(\vep+\nu(v))t_{{\mathbf{b}}}(x,v)} w(v)\left[(1-\f1{{j}})P_{\gamma}f^i+r\right](t-t_{{\mathbf{b}}}(x,v),x_{{\mathbf{b}}}(x,v),v)\nonumber\\
&+\int_{t-t_{{\mathbf{b}}}(x,v)}^te^{-(\vep+\nu(v))(t-s)}wg(s,x-(t-s)v,v)\dd s.
\end{align}
Note that for $(x,v)\in \gamma_- $, 
it is direct to write
\begin{align}\label{3.2.8-1}
h^{i+1}(t,x,v)=w(v)\left[(1-\f1{{j}})P_{\gamma}f^i+r\right](t,x,v).
\end{align}
Now we use the induction argument to show that
\begin{equation}\label{3.2.9}
h^{i}(t,x,v) \text{ is time-periodic with period }T>0,
\end{equation}
and the following estimate holds true:
\begin{multline}\label{3.2.10}
\sup_{0\leq t\leq T}\|h^{i}(t)\|_{L^{\infty}}+\sup_{0\leq t\leq T}|h^{i}(t)|_{L^{\infty}{(\g)}}\\
\leq C_{{j},\vep,i}\left(
\sup_{0\leq t\leq T}\|\nu^{-1}wg(t)\|_{L^{\infty}}+\sup_{0\leq t\leq T}|wr(t)|_{L^{\infty}(\gamma_-)}\right).
\end{multline}
Indeed, for $i=0$, it is obvious 
to see that \eqref{3.2.9} and \eqref{3.2.10} are satisfied. 
Assume that \eqref{3.2.9} and \eqref{3.2.10} hold for $i\geq 0$.  
\eqref{3.2.8} implies that
\begin{align}\label{3.2.11}
&h^{i+1}(t+T,x,v)\notag\\
&=e^{-(\vep+\nu(v))t_{{\mathbf{b}}}} w(v)\left[(1-\f1{{j}})P_{\gamma}f^i+r\right](t+T-t_{{\mathbf{b}}},x_{{\mathbf{b}}},v)\nonumber\\
&\quad +\int_{t+T-t_{{\mathbf{b}}}}^{t+T}e^{-(\vep+\nu(v))(t+T-s)}wg(s,x-(t+T-s)v,v)\dd s.
\end{align}
Note that by the induction assumption that both $f^i$ and $r$ are time-periodic functions with period $T$, the first term on the right-hand side of \eqref{3.2.11} is equal to
$$
e^{-(\vep+\nu(v))t_{{\mathbf{b}}}} w(v)\left[(1-\f1{{j}})P_{\gamma}f^i+r\right](t-t_{{\mathbf{b}}},x_{{\mathbf{b}}},v).$$
For the second term, taking change of variables $s\rightarrow s-T $, we get that
\begin{align}
&\int_{t+T-t_{{\mathbf{b}}}}^{t+T}e^{-(\vep+\nu(v))(t+T-s)}wg(s,x-(t+T-s)v,v)\dd s\nonumber\\
&=\int_{t-t_{{\mathbf{b}}}}^{t}e^{-(\vep+\nu(v))(t-s)}wg(s+T,x-(t-s)v,v)\dd s\nonumber\\
&=\int_{t-t_{{\mathbf{b}}}}^{t}e^{-(\vep+\nu(v))(t-s)}wg(s,x-(t-s)v,v)\dd s,\nonumber
\end{align}
where in the last line we have used the fact that $g$ is periodic in time with period $T$. Therefore, it follows from \eqref{3.2.11} that
$$
h^{i+1}(t+T,x,v)\equiv h^{i+1}(t,x,v),
$$
so, \eqref{3.2.9} holds true for $i+1$. Moreover, to show \eqref{3.2.10} for $i+1$, it follows from \eqref{3.2.8} that
\begin{align}
&\sup_{0\leq t\leq T}\{\|h^{i+1}(t)\|_{L^{\infty}}+|h^{i+1}(t)|_{L^{\infty}(\gamma_+)}\}\nonumber\\
&\leq C\sup_{0\leq t\leq T}\{|h^{i}(t)|_{L^{\infty}(\gamma_+)}+|wr(t)|_{L^{\infty}(\gamma_-)}+\|\nu^{-1}wg(t)\|_{L^{\infty}}\}\nonumber\\
&\leq C_{{j},i}\sup_{0\leq t\leq T}\{|wr(t)|_{L^{\infty}(\gamma_-)}+\|\nu^{-1}wg(t)\|_{L^{\infty}}\},\nonumber
\end{align}
and also one obtains by \eqref{3.2.8-1} that
\begin{align}
\sup_{0\leq t\leq T}|h^{i+1}(t)|_{L^{\infty}(\g_-)}&\leq C\sup_{0\leq t\leq T}|h^{i}(t)|_{L^{\infty}(\gamma_+)}+C\sup_{0\leq t\leq T}|wr(t)|_{L^{\infty}(\gamma_-)}\nonumber\\
&\leq C_{{j},i}\sup_{0\leq t\leq T}\{|wr(t)|_{L^{\infty}(\gamma_-)}+\|\nu^{-1}wg(t)\|_{L^{\infty}}\}.\nonumber
\end{align}
Combing the above two estimates gives the proof of \eqref{3.2.10} for $i+1$. Therefore, by induction \eqref{3.2.9} and \eqref{3.2.10} are satisfied for all $i$.
Then, each $h^{i}(t,x,v)$ is well-defined in $L^{\infty}$ and time-periodic with period $T>0$. Moreover, if $\Omega$ is convex, $t_{{\mathbf{b}}}(x,v)$ and $x_{{\mathbf{b}}}(x,v)$ are smooth away from $\g_0$. If $g$ and $r$ are further continuous, then each $f^{i}(t,x,v)$ is also continuous for away from the grazing set $\mathbb{R}\times\g_0$. 

Next, we need to obtain the uniform-in-$i$ estimate on the solution sequence $f^{i}$. We first treat it in the $L^2$ setting. Taking the inner product of \eqref{3.2.7} with $f^{i+1}$ over $[0,T]\times\Omega\times \mathbb{R}^3$ and using the periodicity of $f^{i+1}$, we obtain that
\begin{align}\label{3.2.12}
&\f{1}{2}\int_0^T|f^{i+1}(s)|_{L^2(\gamma_+)}^2\,\dd s+\int_0^T\vep\|f^{i+1}(s)\|_{L^2}^2+\f34\|\nu^{1/2}f^{i+1}(s)\|_{L^{2}}^2\dd s\nonumber
\nonumber\\
&\leq \f{1}{2}(1-\f2{{j}}+\f{3}{2{j}^2})\int_0^T|f^i(s)|_{L^2(\gamma_+)}^2\dd s\notag\\
&\quad+\int_0^T\|\nu^{-1/2}g(s)\|_{L^2}^2+C_{{j}}|r(s)|_{L^{2}(\g_-)}^2\dd s,
\end{align}
where we have used the fact that
$
|P_{\g}f^i|_{L^{2}(\g_-)}=|P_{\g}f^i|_{L^2(\g_+)}\leq |f^i|_{L^{2}(\g_+)}.
$
For the difference $f^{i+1}-f^i$, 
in a similar way we have
\begin{align}\label{add.iter}
&\f{1}{2}\int_0^T|[f^{i+1}-f^i](s)|_{L^2(\gamma_+)}^2\,\dd s\notag\\
&\quad +\int_0^T\vep\|[f^{i+1}-f^i](s)\|_{L^2}^2+\f34\|\nu^{1/2}[f^{i+1}-f^i](s)\|_{L^{2}}^2\dd s\nonumber
\nonumber\\
&\leq \f{1}{2}(1-\f2{{j}}+\f{3}{2{{j}}^2})\int_0^T|[f^i-f^{i-1}](s)|_{L^2(\gamma_+)}^2\dd s, 
\end{align}
and hence, by iteration the right-hand side of \eqref{add.iter} is further bounded by
\begin{multline}\label{3.2.13}
\f{1}{2}(1-\f2{{j}}+\f{3}{2{{j}}^2})^i\int_0^T|[f^1-f^{0}](s)|_{L^2(\gamma_+)}^2\dd s\\
\leq \f{1}{2}(1-\f2{{j}}+\f{3}{2{{j}}^2})^i\cdot\bigg\{C_{{j}}\int_0^T|r(s)|^2_{L^{2}(\g_-)}+\|\nu^{-1/2}g(s)\|^2_{L^2}\dd s\bigg\},
\end{multline}
where in the second line we have used \eqref{3.2.12} for $i=0$ as well as $f^0\equiv 0$.  
As ${j}_0>1$ is chosen to be large enough, one has
$
0<1-\f2{{j}}+\f{3}{2{{j}}^2}<1
$
for any ${j}\geq {j}_0$. It then follows from \eqref{add.iter} and \eqref{3.2.13} that
$\{f^i\}_{i=0}^{\infty}$ is  a Cauchy sequence in $L^2$. Moreover,
for any $i\geq 0$, it holds that
\begin{equation*}
\int_0^T\|\nu^{1/2}f^{i}(s)\|_{L^2}^2+|f^i(s)|_{L^2(\gamma_+)}^2\dd s\leq C_{{j}} \int_0^T|r(s)|^2_{L^{2}(
{\g_-})}+\|\nu^{-1/2}g(s)\|^2_{L^2}\dd s,
\end{equation*}
and hence
the following uniform-in-$i$ estimate holds true:
\begin{multline}\label{3.2.14}
\int_0^T\|\nu^{1/2}f^{i}(s)\|_{L^2}^2+|f^i(s)|_{L^2(\gamma_+)}^2\dd s\\
\leq C_{{j}}\big\{\sup_{0\leq t\leq T}|wr(t)|_{L^{\infty}(\g_-)}+\sup_{0\leq t\leq T}\|\nu^{-1}wg(t)\|_{L^{\infty}}\big\}^2.
\end{multline}

Next we turn to treat the uniform  estimate in the $L^{\infty}$ setting in terms of the results obtained in the previous subsection. Note that 
Proposition \ref{prop3.1} is also valid if the boundary condition of the problem \eqref{3.1.1} is replaced by
\begin{equation*}
h^{i+1}(t,x,v)|_{\gamma_-}=\f{1-\frac{1}{{j}}}{\tilde{w}(v)} \int_{v'\cdot n(x)>0} h^i({t},x,v') \tilde{w}(v') \dd\sigma'+w(v)r({t},x,v),
\end{equation*}
namely, we have only changed $1$ to $1-1/{j}$.
Correspondingly one can deduce the mild formulation \eqref{3.1.2}, and prove Lemma \ref{lem.smbd} and Proposition \ref{prop3.1}. Particularly,  all constants in \eqref{3.1.4} and \eqref{3.1.5} are independent of ${j}$. Then, using \eqref{3.1.4},  we obtain that
\begin{align}
&\sup_{0\leq t\leq T}\|h^{i+1}(t)\|_{L^\infty}+\sup_{0\leq t\leq T}|h^{i+1}(t)|_{L^\infty(\g)}\nonumber\\
&\quad\leq \frac18 \max_{0\leq l\leq k} \{\sup_{0\leq t\leq T}\|h^{i-l}(t)\|_{L^\infty}\}+C\sup_{0\leq t\leq T}\Big\{ |wr(t)|_{L^\infty(\gamma_-)}+\|\nu^{-1}wg(t)\|_{L^\infty}\Big\}\nonumber\\
&\qquad+C\sup_{0\leq l\leq k} \{ \|\nu^{1/2}f^{i-l}\|_{L^2([0,T];L^2)}\}.\nonumber
\end{align}
It then follows from \eqref{3.2.14} that
\begin{align}\label{3.2.15}
&\sup_{0\leq t\leq T}\|h^{i+1}(t)\|_{L^\infty}{+\sup_{0\leq t\leq T}|h^{i+1}(t)|_{L^\infty(\g)}}\nonumber\\
&\quad\leq \frac18 \max_{0\leq l\leq k} \{\sup_{0\leq t\leq T}\|h^{i-l}(t)\|_{L^\infty}\}
+C_{{j}}\sup_{0\leq t\leq T}\big\{|wr(t)|_{L^{\infty}(\g_-)}+\|\nu^{-1}wg(t)\|_{L^{\infty}}\big\}.
\end{align}
Applying \eqref{A.1} to \eqref{3.2.15}, it holds that for $i\geq k+1$,
\begin{align}\label{3.2.16}
&\sup_{0\leq t\leq T}\|h^{i}(t)\|_{L^{\infty}}{+\sup_{0\leq t\leq T}|h^{i}(t)|_{L^\infty(\g)}}\nonumber\\
&\quad\leq \f18\max_{1\leq l\leq 2k}\{\sup_{0\leq t\leq T}\|h^{l}(t)\|_{L^{\infty}}\}\nonumber\\
&\qquad+\f{8+k}{7}C_{{j}}\big\{\sup_{0\leq t\leq T}|wr(t)|_{L^{\infty}(\g_-)}+\sup_{0\leq t\leq T}\|\nu^{-1}wg(t)\|_{L^{\infty}}\big\}\nonumber\\
&\quad\leq C_{{j}}\big\{\sup_{0\leq t\leq T}|wr(t)|_{L^{\infty}(\g_-)}+\sup_{0\leq t\leq T}\|\nu^{-1}wg(t)\|_{L^{\infty}}\big\},
\end{align}
where we have used \eqref{3.2.10} for $i=1,\cdots,2k$ in the last inequality. Combining \eqref{3.2.16} with \eqref{3.2.10}, we obtain that for $i\geq 1$,
\begin{align}\label{3.2.17}
&\sup_{0\leq t\leq T}\|h^{i}(t)\|_{L^{\infty}}{+\sup_{0\leq t\leq T}|h^{i}(t)|_{L^\infty(\g)}}\nonumber\\
&\quad\leq C_{{j}}\big\{\sup_{0\leq t\leq T}|wr(t)|_{L^{\infty}(\g_-)}+\sup_{0\leq t\leq T}\|\nu^{-1}wg(t)\|_{L^{\infty}}\big\}.
\end{align}
Similarly for obtaining \eqref{3.2.17}, one can apply \eqref{3.1.4} to $h^{i+2}-h^{i+1}$ to get
\begin{align}\label{3.2.18}
&\sup_{0\leq t\leq T}\|[h^{i+2}-h^{i+1}](t)\|_{L^{\infty}}+\sup_{0\leq t\leq T}|[h^{i+2}-h^{i+1}](t)|_{L^{\infty}{(\g)}}\nonumber\\
&\leq \f18\max_{0\leq l\leq k}\{\sup_{0\leq t\leq T}\|[h^{i+1-l}-h^{i-l}](t)\|_{L^{\infty}}\}\notag\\
&\qquad+C\max_{0\leq l\leq k}\{\|\nu^{1/2}[f^{i+1-l}-f^{i-l}]\|_{L^{2}([0,T];L^2)}\}\nonumber\\
&\leq \f18\max_{0\leq l\leq k}\{\sup_{0\leq t\leq T}\|[h^{i+1-l}-h^{i-l}](t)\|_{L^{\infty}}\}\notag\\
&\qquad+C_{{j}}\eta_{{j}}^{i-k}
{\left\{\int_0^T|r(s)|^2_{L^{2}({\g_-}
)}+\|\nu^{-1/2}g(s)\|^2_{L^2}\dd s\right\}^{1/2}}\nonumber\\
&\leq \f18\max_{0\leq l\leq k}\{\sup_{0\leq t\leq T}\|[h^{i+1-l}-h^{i-l}](t)\|_{L^{\infty}}\}\notag\\
&\qquad+
C_{{j}}\sup_{0\leq t\leq T}\{|wr(t)|_{L^{\infty}(\g_-)}+\|\nu^{-1}wg(t)\|_{L^{\infty}}\}\eta_{{j}}^{i+k+1},
\end{align}
where we have denoted
$
\eta_{{j}}:=\sqrt{1-\f2{{j}}+\f3{{j}^2}}.
$
Let ${j}_0>1$ be suitably large such that
$
\f18\eta_{{j}}^{-k-1}\leq\f12
$
for any ${j}\geq {j}_0$. Then, applying \eqref{A.1-1} to \eqref{3.2.18}, we obtain that for $i\geq k+1$,
\begin{align}\label{3.2.19}
&\sup_{0\leq t\leq T}\|[h^{i+2}-h^{i+1}](t)\|_{L^{\infty}}+\sup_{0\leq t\leq T}|[h^{i+2}-h^{i+1}](t)|_{L^{\infty}{(\g)}}\nonumber\\
&\leq \left(\f18\right)^{[\f{i}{k+1}]}\max_{0\leq l\leq 2k+1}\{\|h^{l}(t)\|_{L^{\infty}}\}\notag\\
&\qquad+C_{{j}}\sup_{0\leq t\leq T}\{|wr(t)|_{L^{\infty}(\g_-)}+\|v^{-1}wg(t)\|_{L^{\infty}}\}\cdot\eta_{{j}}^i\nonumber\\
&\leq C_{{j}}\big\{\left(\f18\right)^{[\f{i}{k+1}]}+\eta_{{j}}^i\big\}\sup_{0\leq t\leq T}\{|wr(t)|_{L^{\infty}(\g_-)}+\|v^{-1}wg(t)\|_{L^{\infty}}\}.
\end{align}
Hence, from \eqref{3.2.19}, we see that  $\{h^{i}\}$ is also a Cauchy sequence in $L^{\infty}$. Let $h(t,x,v)$ be the limit function of $h^{i}$ in $L^\infty$. It is straightforward to check that $f:=\f{h}{w}$ solves \eqref{3.2.1} for ${j}\geq {j}_0$. Furthermore, since each $f^i$ is a time-periodic function with period $T$ and $h^i=wf^i$ converges to $h$ in $L^{\infty}$, then $f=\f{h}{w}$ is also  periodic in time with the same period $T$. If $\Omega$ is convex, the continuity of $f$ directly follows from the continuity of $f^i$. Moreover, taking the limit $i \rightarrow \infty$ in \eqref{3.2.14}, we get that
\begin{align}\label{3.2.20}
\|\nu^{1/2}f\|_{L^{2}([0,T];L^2)}\leq C_{{j}}\sup_{0\leq t\leq T}\{|wr(t)|_{L^{\infty}(\g_-)}+\|\nu^{-1}wg(t)\|_{L^{\infty}}\}.
\end{align}
Then the $L^{\infty}$ bound \eqref{3.2.6} directly follows from \eqref{3.1.5} and \eqref{3.2.20}. The proof of Lemma \ref{lem3.2.1} is therefore complete.
\end{proof}

As mentioned before, Lemma \ref{lem3.2.1} is the first step for obtaining the approximation solutions $f^{{j},\varepsilon}$ to \eqref{3.2.1}. We now turn to the second step to establish the solvability of the problem \eqref{3.2.2} by letting ${j}\to \infty$. For the time being, in the following lemma we omit the dependence of $f^{{j},\varepsilon}$ on $\varepsilon$ for brevity.

\begin{lemma}\label{lem3.2.2}
Let $-3<\gamma\leq 1$, $\v>0$, {$0\leq q<1/8$} and $\beta>3$. Under the same assumption as in Lemma \ref{lem3.2.1}, there exists a unique time-periodic solution $f(t,x,v)$ to \eqref{3.2.2} satisfying the estimate
\begin{multline}\label{3.2.2-2}
\sup_{0\leq t\leq T}\big\{\|wf(t)\|_{L^\infty} +|wf(t)|_{L^\infty{(\g)}}\big\} \\
\leq C \sup_{0\leq t\leq T}\big\{ |wr(t)|_{L^\infty(\g_-)}+\|\nu^{-1}wg(t)\|_{L^\infty} \big\}.
\end{multline}
Furthermore, if $\Omega$ is convex,  $g$ is continuous in
$\mathbb{R}\times\Omega\times\mathbb{R}^3$ and
$r$ is continuous {in $\mathbb{R}\times \g_-$,} 
 then $f(t,x,v)$ is also continuous away from the grazing set $\mathbb{R}\times\gamma_0$.
\end{lemma}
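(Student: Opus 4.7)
\emph{Strategy.} The plan is to realize $f^{\varepsilon}$ as the limit $j\to\infty$ of the family $\{f^{j,\varepsilon}\}_{j\ge j_0}$ supplied by Lemma \ref{lem3.2.1}. The difficulty is that the constant $C_{\varepsilon,j}$ in \eqref{3.2.6} degenerates as $j\to\infty$, since its $j$-dependence originates in the $L^2$ bootstrap \eqref{3.2.12}--\eqref{3.2.14}, where the geometric ratio $(1-2/j+3/(2j^2))/2$ approaches $1/2$ as $j\to\infty$ and the iteration loses its strict contraction. One cannot close a pure $L^2$ estimate uniformly in $j$, because the limiting diffuse-reflection operator $P_\gamma$ is merely a (non-strict) contraction from $L^2(\gamma_+)$ to $L^2(\gamma_-)$. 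The key task is therefore to establish a uniform-in-$j$ bound of the form \eqref{3.2.2-2} on $f^{j,\varepsilon}$ directly at the $L^\infty$ level.

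\emph{Uniform $L^\infty$ estimate.} For fixed $\varepsilon>0$ and $j\ge j_0$, put $h^{j,\varepsilon}=wf^{j,\varepsilon}$ and trace characteristics backward through successive reflections to obtain a mild representation of $h^{j,\varepsilon}$ analogous to \eqref{3.1.2} but specialized to $\lambda=0$ (so that all $K$-contributions drop out) and with the factor $(1-1/j)$ inserted at every reflection. After iterating $k=\hat C_1(nT)^{5/4}$ reflections, the representation decomposes into (i) source contributions from $wr$ and $wg$, each controlled uniformly in $j$ by $Ck\sup_{0\le t\le T}\{|wr(t)|_{L^\infty(\gamma_-)}+\|\nu^{-1}wg(t)\|_{L^\infty}\}$ using the Gaussian factors in the reflection measure $d\sigma_l$ together with \eqref{3.1.8}--\eqref{3.1.9}, and (ii) a self-referential tail carrying the cumulative coefficient $(1-1/j)^{k-1}\le 1$ and the small-measure factor $(1/2)^{\hat C_2(nT)^{5/4}}$ from Lemma \ref{lem.smbd}. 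For $n$ taken large enough this tail coefficient is at most $1/2$ and can be absorbed against $\sup\|h^{j,\varepsilon}\|_{L^\infty}$ on the left, yielding \eqref{3.2.2-2} with a constant independent of $j$; the boundary-trace bound $|wf^{j,\varepsilon}|_{L^\infty(\gamma)}$ follows by evaluating the same mild formula at boundary points.

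\emph{Existence via a Cauchy argument.} To promote this bound to convergence, fix $j_1,j_2\ge j_0$ and note that $\delta f:=f^{j_1,\varepsilon}-f^{j_2,\varepsilon}$ solves
\begin{equation*}
\partial_t\delta f+v\cdot\nabla_x\delta f+(\varepsilon+\nu)\delta f=0,\qquad
\delta f|_{\gamma_-}=\bigl(1-\tfrac{1}{j_1}\bigr)P_\gamma\delta f+\bigl(\tfrac{1}{j_2}-\tfrac{1}{j_1}\bigr)P_\gamma f^{j_2,\varepsilon}.
\end{equation*}
Thus $\delta f$ itself solves a problem of the form \eqref{3.2.1} (with $j=j_1$, zero bulk source, and boundary source $(\tfrac{1}{j_2}-\tfrac{1}{j_1})P_\gamma f^{j_2,\varepsilon}$), so the uniform estimate from the previous paragraph gives $\|w\delta f\|_{L^\infty}\lesssim|1/j_1-1/j_2|\,\|wf^{j_2,\varepsilon}\|_{L^\infty}\to 0$. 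Consequently $\{f^{j,\varepsilon}\}$ is Cauchy in the weighted $L^\infty$ norm, and its strong limit $f^{\varepsilon}$ inherits time-periodicity, the bound \eqref{3.2.2-2}, and continuity away from $\mathbb{R}\times\gamma_0$ under the stated regularity assumptions. Passing to the limit in the equation and in the boundary condition (using $(1-\tfrac{1}{j})P_\gamma f^{j,\varepsilon}\to P_\gamma f^{\varepsilon}$) identifies $f^{\varepsilon}$ as a solution to \eqref{3.2.2}.

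\emph{Uniqueness and main obstacle.} If $f_1,f_2$ are two time-periodic solutions, their difference $\delta f$ satisfies the homogeneous damped transport problem with pure diffuse reflection $\delta f|_{\gamma_-}=P_\gamma\delta f$. Taking the $L^2$ inner product with $\delta f$ over $[0,T]\times\Omega\times\mathbb{R}^3$, the temporal boundary terms cancel by periodicity and the reflection inequality $|P_\gamma\delta f|_{L^2(\gamma_-)}\le|\delta f|_{L^2(\gamma_+)}$ forces $\varepsilon\int_0^T\|\delta f(t)\|_{L^2}^2\,dt\le 0$, so $\delta f\equiv 0$. The principal obstacle in the entire argument is the uniform-in-$j$ $L^\infty$ bound of the second paragraph: it succeeds only because $(1-1/j)\le 1$ enters multiplicatively at each reflection, so the measure-smallness of Lemma \ref{lem.smbd} alone (which is $j$-independent) suffices to close the iteration without any loss as $j\to\infty$.
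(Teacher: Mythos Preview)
Your overall strategy is correct, but it differs from the paper's proof, and there is a gap in your ``uniform $L^\infty$ estimate'' paragraph that you should be aware of.

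\medskip
\textbf{Comparison with the paper.} The paper does \emph{not} close the uniform-in-$j$ bound purely at the $L^\infty$ level. Instead it runs an $L^2$ energy estimate on \eqref{3.2.1}, which leaves an uncontrolled term $\eta\int_0^T|P_\gamma f^{j}|_{L^2(\gamma_+)}^2\,ds$; this boundary term is then handled by splitting off the near-grazing set $\gamma_+^{\varepsilon'}$ and applying the trace Lemma~\ref{lemT}, producing \eqref{3.2.22} and hence \eqref{3.2.25} with a residual $C\eta\sup_t\|e^{-|v|^2/16}f^j(t)\|_{L^\infty}^2$. Only then does the paper invoke the $L^\infty$ bound \eqref{3.1.5} (which contains an $L^2$ remainder) and absorb. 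Your route exploits the special feature $\lambda=0$ of \eqref{3.2.2}: since there are no $K$-terms, the mild formula \eqref{3.1.2} has no $J_3,J_9,J_{12}$ contributions, so the $L^2$ term on the right of \eqref{3.1.5} never appears, and one can close directly in $L^\infty$. This is shorter and more elementary here; the paper's $L^2$-plus-trace machinery is heavier but reusable verbatim in Lemma~\ref{lem3.2.3}, where $K$ is present and the $L^2$ detour is unavoidable.

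\medskip
\textbf{The gap.} Your description of the self-referential part as a \emph{single} tail carrying only the small-measure factor $(1/2)^{\hat C_2(nT)^{5/4}}$ is incomplete. With $s=-nT$ fixed, the mild formula \eqref{3.1.2} (at $\lambda=0$) still produces three distinct self-referential pieces: $J_1$ (the trajectory reaches $s$ before any reflection), $J_7$ (it reaches $s$ after $1\le l\le k-1$ reflections), and $J_{14}$ (after $k-1$ reflections it is still above $s$). Lemma~\ref{lem.smbd} controls only $J_{14}$. The terms $J_1$ and $J_7$ must be handled separately by the exponential damping $e^{-(\varepsilon+\nu(v))(t-s)}$ across the long time interval $nT$, exactly as in \eqref{3.1.6}--\eqref{3.1.7} and \eqref{3.1.11}--\eqref{3.1.12}; for soft potentials this requires combining $\nu(v)(t-s)$ with the Gaussian weight $|v|^2/16$ via Young's inequality to get $e^{-c(nT)^\alpha}$, since $\nu(v)$ alone has no positive lower bound. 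Once you add these two estimates, your closing step ``for $n$ large the tail coefficient is at most $1/2$'' goes through with constants independent of both $j$ and $\varepsilon$, and the Cauchy and uniqueness arguments you give are fine as written.
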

\begin{proof}
We shall first obtain the uniform-in-${j}$ estimate on the solutions $f^{{j}}$ to \eqref{3.2.1} and then show that $h^{{j}}:=wf^{{j}}$ is Cauchy in $L^{\infty}$.

To treat $L^\infty$ estimates, we should start from $L^2$ estimates. Taking the inner product of \eqref{3.2.1} with $f^{{j}}$ over $[0,T]\times \Omega\times \mathbb{R}^3$ gives that
\begin{align}
&\int_0^T\vep\|f^{{j}}(s)\|_{L^2}^2+\f12\|\nu^{1/2}f^{{j}}(s)\|_{L^2}^2+\f12|f^{{j}}(s)|_{L^2(\g_+)}^2\dd s\nonumber\\
&\leq C\int_0^T\|\nu^{-1/2}g(s)\|_{L^2}^2\dd s+\f12\int_0^T\big|(1-\f1{{j}})P_{\g}f^{{j}}+r\big|_{L^2(\g_-)}^2\dd s\nonumber\\
&\leq C\int_0^T\|\nu^{-1/2}g(s)\|_{L^2}^2\dd s+\f{1+\eta}{2}\int_0^T|P_\g f^{{j}}(s)|_{L^{2}(\g_+)}^2\dd s+C_\eta\int_0^T|r(s)|^2_{L^{2}(\g_-)}\dd s,\nonumber
\end{align}
which further implies that
\begin{align}\label{3.2.21}
&\int_0^T\vep\|f^{{j}}(s)\|_{L^2}^2+\f12\|\nu^{1/2}f^{{j}}(s)\|_{L^2}^2+\f12|(I-P_\gamma)f^{{j}}(s)|_{L^2(\g_+)}^2\dd s\nonumber\\
&\leq C\int_0^T\|\nu^{-1/2}g(s)\|_{L^2}^2\dd s+\f{\eta}{2}\int_0^T|P_\g f^{{j}}(s)|_{L^{2}(\g_+)}^2\dd s+C_\eta\int_0^T|r(s)|^2_{L^{2}(\g_-)}\dd s,
\end{align}
where $\eta>0$ can be arbitrarily small. To estimate the second term on the right-hand side of \eqref{3.2.21}, using the same idea as in \cite{EGKM}, we {recall }
 the near-grazing set {$\g_+^{\vep'}$ defined in \eqref{cut}} 
{and split $P_{\g}f^j=P_{\g}(f^j\Fi_{\g^{\vep'}})+P_{\g}(f^j\Fi_{\g_+\setminus\g^{\vep'}_+}).$ By a direct computation, we have
\begin{align}
|P_{\g}(f^j\Fi_{\g^{\vep'}})|_{L^2(\g_-)}\leq C\vep'|f^j|_{L^2(\g_+)}\leq C\vep'|P_{\g}f^j|_{L^2(\g_+)}+C\vep'|(I-P_{\g})f^j|_{L^2(\g_+)},\nonumber
\end{align}
and
\begin{align}
|P_{\g}(f^j\Fi_{\g_+\setminus\g^{\vep'}_+})|_{L^2(\g_-)}^2=&\int_{\g_-}\mu(v)|n(x)\cdot v|\dd \gamma\nonumber\\
&\times\left(\int_{n(x)\cdot v'>0}e^{-\f{|v|^2}{8}}f^j\Fi_{\g_+\setminus\g^{\vep'}_+}e^{\f{|v|^2}{8}}\sqrt{\mu(v)}|n(x)\cdot v'|\dd v'\right)^{2},\nonumber\\
\leq &C|e^{-\f{|v|^2}{8}}f^j\Fi_{\g_+\setminus\g^{\vep'}_+})|_{L^2(\g_+)}^2.\nonumber
\end{align}
From the first equation of \eqref{3.2.1}, we have
\begin{align}
(\pa_t+v\cdot \nabla_x )e^{-\f{1}{4}|v|^2}(f^{{j}})^2=2e^{-\f{1}{4}|v|^2}gf^{{j}}-2[\vep+ \nu(v)] e^{-\f{1}{4}|v|^2} (f^{{j}})^2,\nonumber
\end{align}
which implies that
$$
\|(\pa_t+v\cdot \nabla_x )e^{-\f{1}{4}|v|^2}(f^{{j}})^2\|_{L^1}\leq C\|e^{-\f{|v|^2}{16}}f^{{j}}\|_{L^2}^2+C\|e^{-\f{|v|^2}{16}}g\|_{L^2}^2.
$$
Thus, from the trace Lemma \ref{lemT}, it follows that
\begin{align}
&\int_0^T|e^{-\f{1}{8}|v|^2}f^{{j}}(s)\Fi_{\g_+\setminus\g_+^{\vep'}}|_{L^2(\g_+)}^2\dd s\nonumber\\
&=\int_0^T\left|[e^{-\f{1}{8}|v|^2}f^{{j}}]^2(s)\Fi_{\g_+\setminus\g_+^{\vep'}}\right|_{L^1(\g_+)}\dd s
\nonumber\\
&\lesssim_{\vep',\Omega}\int_0^T\|(\pa_t+v\cdot \nabla_x )e^{-\f{1}{4}|v|^2}(f^{{j}})^2\|_{L^1}+\|e^{-\f14|v|^2}(f^{{j}})^2(s)\|_{L^1}\dd s\notag\\
&\qquad+\|e^{-\f14|v|^2}(f^{{j}})^2(0)\|_{L^1}\nonumber\\
&\lesssim_{\vep',\Omega}\int_0^T\|e^{-\f{|v|^2}{16}}f^{{j}}(s)\|_{L^2}^2\dd s+\int_0^T\|e^{-\f{|v|^2}{16}}g(s)\|_{L^2}^2\dd s+\sup_{0\leq t\leq T}\|e^{-\f{|v|^2}{16}}f^{{j}}(t)\|_{L^{\infty}}^2.\nonumber
\end{align}
Collecting these estimates, we have
\begin{align}\label{3.2.22}
&\int_0^T|P_{\g}f^{{j}}(s)|_{L^2(\g_+)}^2\dd s\nonumber\\
&\quad\leq C\vep'\int_0^T|P_{\g}f^{{j}}(s)|_{L^2(\g_+)}^2\dd s+C\vep'\int_0^T|(I-P_{\g})f^{{j}}(s)|_{L^2(\g_+)}^2\dd s\nonumber\\
&\qquad+C\int_0^T|e^{-\f{1}{8}|v|^2}P_\g f^{{j}}\Fi_{\g_+\setminus\g_+^{\vep'}}(s)|_{L^2(\g_+)}^2\dd s\nonumber\\
&\quad\leq C\vep'\int_0^T|P_{\g}f^{{j}}(s)|_{L^2(\g_+)}^2\dd s+C\vep'\int_0^T|(I-P_{\g})f^{{j}}(s)|_{L^2(\g_+)}^2\dd s\nonumber\\
&\qquad+C_{\vep'}\int_0^T\|e^{-\f{|v|^2}{16}}f^{{j}}(s)\|_{L^2}^2+\|e^{-\f{|v|^2}{16}}g(s)\|_{L^2}^2\dd s+C_{\vep'}\sup_{0\leq t\leq T}\|e^{-\f{|v|^2}{16}}f^{{j}}(t)\|_{L^{\infty}}^2\nonumber\\
&\quad\leq C\int_0^T|(I-P_{\g})f^{{j}}(s)|_{L^2(\g_+)}^2+\|e^{-\f{|v|^2}{16}}f^{{j}}(s)\|_{L^2}^2+\|e^{-\f{|v|^2}{16}}g(s)\|_{L^2}^2\dd s\nonumber\\
&\qquad+C\sup_{0\leq t\leq T}\|e^{-\f{|v|^2}{16}}f^{{j}}(t)\|_{L^{\infty}}^2.
\end{align}
Here we have taken $\vep'>0$ suitably small. Plugging \eqref{3.2.22} back to \eqref{3.2.21}, }
we get that
\begin{align}\label{3.2.24}
&\int_0^T\vep\|f^{{j}}(s)\|_{L^2}^2+\|\nu^{1/2}f^{{j}}(s)\|_{L^2}^2+|(I-P_\gamma)f^{{j}}(s)|_{L^2(\g_+)}^2\dd s\nonumber\\
&\leq {C}\eta\int_0^T\|\nu^{1/2}f^{{j}}(s)\|_{L^2}^2+|(I-P_\gamma)f^{{j}}(s)|_{L^2(\g_+)}^2\dd s
+{C}\eta \sup_{0\leq t\leq T}\|{e^{-\f{|v|^2}{16}}}f^{{j}}(t)\|_{L^{\infty}}^2\nonumber\\
&\quad+{C}\int_0^T\|\nu^{-1/2}g(s)\|_{L^2}^2\dd s+{C}\int_0^T|r(s)|^2_{L^{2}(\g_-)}\dd s.
\end{align}
Then, for any $\eta$ with $0<\eta\leq \eta_1:=\f{1}{2C}$, it follows from \eqref{3.2.24} that
\begin{align}\label{3.2.25}
&\int_0^T\vep\|f^{{j}}(s)\|_{L^2}^2+\f12\|\nu^{1/2}f^{{j}}(s)\|_{L^2}^2+\f12|(I-P_\gamma)f^{{j}}(s)|_{L^2(\g_+)}^2\dd s\nonumber\\
&\leq
C\eta\sup_{0\leq t\leq T}\|{e^{-\f{|v|^2}{16}}}f^{{j}}(t)\|_{L^{\infty}}^2+C_{\eta}\int_0^T\|\nu^{-1/2}g(s)\|_{L^2}^2\dd s+C_{\eta}\int_0^T|r(s)|^2_{L^{2}(\g_-)}\dd s\nonumber\\
&\leq C\eta\sup_{0\leq t\leq T}\|{e^{-\f{|v|^2}{16}}}f^{{j}}(t)\|_{L^{\infty}}^2+C_{\eta}\sup_{0\leq t\leq T}\{\|\nu^{-1}wg(t)\|_{L^\infty}+|wr(t)|_{L^\infty(\g_-)}\}^2.
\end{align}
On the other hand, by applying the $L^{\infty}$ estimate \eqref{3.1.5} to $h^{{j}}:=wf^{{j}}$, one has
\begin{multline*}
\sup_{0\leq t\leq T}\{\|h^{{j}}(t)\|_{L^{\infty}}+|h^{{j}}(t)|_{L^{\infty}{(\g)}}\}\\
\leq C\sup_{0\leq t\leq T}\{\|\nu^{-1}wg(t)\|_{L^\infty}+|wr(t)|_{L^\infty(\g_-)}\}
+C\|\nu^{1/2}f^{{j}}\|_{L^{2}([0,T];L^2)}.
\end{multline*}
Plugging \eqref{3.2.25} in the above estimate gives
\begin{multline*}
\sup_{0\leq t\leq T}\{\|h^{{j}}(t)\|_{L^{\infty}}+|h^{{j}}(t)|_{L^{\infty}{(\g)}}\}\\
\leq C{\eta^{1/2}}\sup_{0\leq t\leq T}\|h^{{j}}(t)\|_{L^{\infty}}+C_{\eta}\sup_{0\leq t\leq T}\{\|\nu^{-1}wg(t)\|_{L^\infty}+|wr(t)|_{L^\infty(\g_-)}\}.
\end{multline*}
Further letting $\eta>0$ be small enough, it then follows that
\begin{equation}\label{3.2.26}
\sup_{0\leq t\leq T}\{\|h^{{j}}(t)\|_{L^{\infty}}+|h^{{j}}(t)|_{L^{\infty}{(\g)}}\}
\leq C\sup_{0\leq t\leq T}\{\|\nu^{-1}wg(t)\|_{L^\infty}+|wr(t)|_{L^\infty(\g_-)}\}.
\end{equation}
This completes the uniform-in-${j}$ $L^\infty$ estimates.

Next, we need to show that $h^{{j}}:=wf^{{j}}$ is Cauchy in $L^{\infty}$.  For this, we consider the difference $h^{{{j}}_2}-h^{{j}_1}$. Note that $f^{{j}_2}-f^{{j}_1}=w^{-1}\big(h^{{j}_2}-h^{{j}_1}\big)$ solves
\begin{equation}
\left\{\begin{aligned}
&\pa_t(f^{{j}_2}-f^{{{j}}_1})+v\cdot\nabla_x(f^{{j}_2}-f^{{j}_1})+[\vep+\nu(v)](f^{{j}_2}-f^{{j}_1})=0,\\
&(f^{{j}_2}-f^{{j}_1})|_{\gamma_-}=(1-\f{1}{{j}_2})P_{\g}(f^{{j}_2}-f^{{j}_1})+(\f1{{j}_1}-\f1{{j}_2})P_{\g}f^{{j}_1}.\nonumber
\end{aligned}\right.
\end{equation}
Then,  by similar energy estimates made above, it holds that
\begin{align}
&\int_0^T\|\nu^{1/2}(f^{{j}_2}-f^{{j}_1})(s)\|_{L^2}^2\dd s\nonumber\\
 &\leq \eta\sup_{0\leq t\leq T}\|(h^{{j}_2}-h^{{j}_1})(t)\|_{L^{\infty}}^2+C_{\eta}\int_0^T|(\f1{{j}_2}-\f1{{j}_1})P_{\g}f^{{j}_1}(s)|_{L^{2}(\g_-)}^2\dd s\nonumber\\
 &\leq \eta\sup_{0\leq t\leq T}\|(h^{{j}_2}-h^{{j}_1})(t)\|_{L^{\infty}}^2\notag\\
&\qquad+C_{\eta}\bigg(\f{1}{{j}_1^2}+\f{1}{{j}_2^2}\bigg)\sup_{0\leq t\leq T}\{\|\nu^{-1}wg(t)\|_{L^\infty}+|wr(t)|_{L^\infty(\g_-)}\}^{{2}},\nonumber
\end{align}
where we have used \eqref{3.2.26} in the last inequality. Again, applying {\eqref{3.1.5}} 
 to the difference $h^{{j}_2}-h^{{j}_1}$, we get that
\begin{align}
&\sup_{0\leq t\leq T}\|(h^{{j}_2}-h^{{j}_1})(t)\|_{L^{\infty}}+\sup_{0\leq t\leq T}|(h^{{j}_2}-h^{{j}_1})(t)|_{L^{\infty}{(\g)}}\nonumber\\
&\leq C\sup_{0\leq t\leq T}\bigg|w(\f1{{j}_2}-\f1{{j}_1})P_{\g}f^{{j}_1}\bigg|_{L^{\infty}(\g_-)}+C\|\nu^{1/2}(f^{{j}_2}-f^{{j}_1})\|_{L^2([0,T];L^2)}\nonumber\\
&\leq C{\eta^{1/2}}\sup_{0\leq t\leq T}\|(h^{{j}_2}-h^{{j}_1})(t)\|_{L^{\infty}}\notag\\
&\qquad+C_{\eta}\bigg(\f{1}{{j_1}}+\f{1}{{j_2}}\bigg)\sup_{0\leq t\leq T}\{\|\nu^{-1}wg(t)\|_{L^\infty}+|wr(t)|_{L^\infty(\g_-)}\}.\nonumber
\end{align}
Taking $\eta>0$ suitably small, the above estimate yields that $h^{{j}}$ is Cauchy in $L^{\infty}$. Let $h(t,x,v)$ be the limit function of $h^{{j}}$. It is direct to check that $f:=\f{h}{w}$ solves \eqref{3.2.2}, and the estimate \eqref{3.2.2-2} follows from \eqref{3.2.26}.  Moreover, since  each $f^{{j}}$ is time-periodic with period $T$, then $f$ is also time-periodic with the same period $T$. The continuity follows in a similar way. Thus, the proof of Lemma \ref{lem3.2.2} is complete.
\end{proof}

We now move to the third step for treating the existence and uniform estimates of solutions to the linear problem \eqref{3.2.4} where the linear collision term is involved.  For the proof ,we follow the same strategy as in \cite{DHWZ}.

\begin{lemma}\label{lem3.2.3}
Let $-3<\gamma\leq 1$, $\vep>0$, {$0\leq q<1/8$} and $\beta>3$. Under the same assumption as in Lemma \ref{lem3.2.1}, the linear problem \eqref{3.2.4} admits a unique time-periodic solution $f^{\vep}(t,x,v)$ with period $T$, satisfying the following estimate:
\begin{multline}\label{3.2.27}
\sup_{0\leq t\leq T}\big\{\|wf^{\vep}(t)\|_{L^\infty} +|wf^{\vep}(t)|_{L^\infty{(\g)}}\big\}\\
 \leq C_{\vep}\sup_{0\leq t\leq T}\big\{ |wr(t)|_{L^\infty(\g_-)}+\|\nu^{-1}wg(t)\|_{L^\infty} \big\}.
\end{multline}
Moreover, if $\Omega$ is convex, $g$ is continuous in
$\mathbb{R}\times\Omega\times\mathbb{R}^3$, and  $r$ is continuous {in $\mathbb{R}\times\g_-$, }
then $f^\vep(t,x,v)$ is also continuous away from $\mathbb{R}\times\gamma_0$.
\end{lemma}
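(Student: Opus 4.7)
Following Step 3 of the road map given before Lemma \ref{lem3.2.1}, I embed \eqref{3.2.4} into the one-parameter family \eqref{3.2.3} indexed by $\lambda\in[0,1]$ and carry out a continuity argument from $\lambda=0$ (handled by Lemma \ref{lem3.2.2}) to $\lambda=1$. Two ingredients are needed: a $\lambda$-uniform a priori estimate and an openness argument for the set
$\Lambda:=\{\lambda\in[0,1]:\ \eqref{3.2.3}\text{ admits a unique time-periodic }L^\infty\text{ solution}\}.$

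For the $\lambda$-uniform estimate, the plan is to take the $L^2$ inner product of \eqref{3.2.3} with $f^{\lambda,\vep}$ and integrate over $[0,T]\times\Omega\times\mathbb{R}^3$; periodicity kills the time-derivative contribution. Invoking the algebraic decomposition $\nu-\lambda K=(1-\lambda)\nu+\lambda L$ together with the coercivity \eqref{c} yields
\begin{equation*}
\int_0^T\!\!\Big(\vep\|f^{\lambda,\vep}\|_{L^2}^2+\lambda c_0\|\nu^{1/2}(I-P)f^{\lambda,\vep}\|_{L^2}^2+\tfrac12|(I-P_\gamma)f^{\lambda,\vep}|_{L^2(\gamma_+)}^2\Big)\,\dd s
\end{equation*}
bounded by inhomogeneities in $g,r$ plus an absorbable boundary contribution. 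The hydrodynamic part $Pf^{\lambda,\vep}$ is recovered from the $\vep$-damping (at the cost of a factor depending on $\vep$), while $P_\gamma f^{\lambda,\vep}$ on $\gamma_+$ is handled exactly as in Lemma \ref{lem3.2.2} via the near-grazing splitting \eqref{cut} and the trace Lemma \ref{lemT}. Setting $h^{\lambda,\vep}=wf^{\lambda,\vep}$ and applying \eqref{3.1.5} (which was designed precisely for the equation with $\lambda K_w$ on the right) then upgrades the resulting $L^2$ control on $\nu^{1/2}f^{\lambda,\vep}$ to the $\lambda$-uniform $L^\infty$ bound \eqref{3.2.27}, after choosing a small auxiliary parameter to absorb $\sup_t\|wf^{\lambda,\vep}(t)\|_{L^\infty}$ into the left-hand side.

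For openness, given $\lambda_0\in\Lambda$ with solution operator $\mathcal S_{\lambda_0}$, I would solve \eqref{3.2.3} for $\lambda$ close to $\lambda_0$ by iterating the map $\tilde f\mapsto \mathcal S_{\lambda_0}\bigl((\lambda-\lambda_0)K\tilde f+g,\,r\bigr)$. The kernel bounds \eqref{2.2} and \eqref{2.8} imply $\|\nu^{-1}K_w h\|_{L^\infty}\le C_m\|h\|_{L^\infty}$, so this map is a contraction in $\sup_t\|w\,\cdot\|_{L^\infty}$ whenever $|\lambda-\lambda_0|$ is sufficiently small; its fixed point furnishes the solution for $\lambda$. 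Closedness of $\Lambda$ is a standard weak-limit argument using the $\lambda$-uniform bound from the previous paragraph. Since $0\in\Lambda$ by Lemma \ref{lem3.2.2}, connectedness of $[0,1]$ gives $\Lambda=[0,1]$, and $\lambda=1$ produces $f^\vep$ together with \eqref{3.2.27}. Continuity of $f^\vep$ away from $\mathbb{R}\times\gamma_0$ under the stated hypotheses is inherited from the continuity of the approximants (provided by Lemma \ref{lem3.2.2} and preserved through the fixed-point iteration) via uniform $L^\infty$ convergence.

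The main obstacle is the $\lambda$-uniform $L^2$ estimate. One cannot simply treat $\lambda K f^{\lambda,\vep}$ as an inhomogeneous source, because that would leave the right-hand side with an un-absorbable $\|f^{\lambda,\vep}\|_{L^\infty}$ factor. The structural identity $\nu-\lambda K=(1-\lambda)\nu+\lambda L$ is the key workaround: it channels $\lambda K$ into a coercive piece for every $\lambda\in[0,1]$ via \eqref{c}. This is also the step in which the strictly positive artificial damping $\vep$ becomes indispensable, as it is the only mechanism controlling the hydrodynamic projection $Pf^{\lambda,\vep}$, on which $L$ gives no information.
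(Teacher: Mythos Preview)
Your proposal is correct and follows essentially the same route as the paper: both establish a $\lambda$-uniform a priori bound for \eqref{3.2.3} by pairing with $f^{\lambda,\vep}$, controlling $P_\gamma f^{\lambda,\vep}$ via the near-grazing splitting and Lemma~\ref{lemT}, and then feeding the resulting $L^2$ bound into \eqref{3.1.5}; the bootstrap from $\lambda=0$ to $\lambda=1$ is the same (the paper simply defers its details to \cite{DHWZ}). The only cosmetic difference is that the paper uses the cruder consequence $\langle \lambda K f,f\rangle\le \lambda\|\nu^{1/2}f\|_{L^2}^2$ of $L\ge 0$ and therefore retains only the $\vep$-damping term (see \eqref{3.2.30}--\eqref{3.2.32}), whereas you keep the extra $\lambda c_0\|\nu^{1/2}(I-P)f^{\lambda,\vep}\|_{L^2}^2$ from your decomposition; this term is harmless but not needed, since \eqref{3.1.5} only requires control of $\|\langle v\rangle^{-|\gamma|}f^{\lambda,\vep}\|_{L^2([0,T];L^2)}\le \|f^{\lambda,\vep}\|_{L^2([0,T];L^2)}$, which the $\vep$-damping already supplies.
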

\begin{proof}
The proof relies on the following uniform-in-$\lambda$ estimate on the solution $f^{\lambda,\vep}$ to the modified linear problem \eqref{3.2.3} {for $0\leq \lambda\leq 1$}:
\begin{multline}\label{3.2.28}
\sup_{0\leq t\leq T}\big\{\|wf^{\lambda,\vep}(t)\|_{L^\infty} +|wf^{\lambda,\vep}(t)|_{L^\infty{(\g)}}\big\} \\
\leq C_{\vep} \sup_{0\leq t\leq T}\big\{ |wr(t)|_{L^\infty(\g_-)}+\|\nu^{-1}wg(t)\|_{L^\infty} \big\},
\end{multline}
where the positive constant $C_{\vep}$ is independent of $\la$ but may depend on $\vep$.  Once \eqref{3.2.28} is established, one can use the same bootstrap argument as in \cite{DHWZ} to complete the whole proof of Lemma \ref{lem3.2.3}, particularly deriving the estimate \eqref{3.2.27}. Thus, for brevity of presentation, in what follows we only show the uniform estimate  \eqref{3.2.28}.

Taking the inner product of \eqref{3.2.3} with $f^{\lambda,\vep}$ over $[0,T]\times\Omega\times\mathbb{R}^3$ gives that
\begin{multline}\label{3.2.29}
\int_0^T\vep\|f^{\lambda,\vep}(s)\|_{L^2}^2+\|\nu^{1/2}f^{\lambda,\vep}(s)\|_{L^2}^2+\f{1}{2}|f^{\lambda,\vep}(s)|_{L^2(\g_+)}^2\dd s\\
\leq \int_0^T\langle \lambda Kf^{\lambda}(s),f^{\lambda}(s)\rangle+\f12\big|P_{\g}f^{\lambda,\vep}(s)+r(s)\big|_{L^2(\g_-)}^2 \\
+\f{\vep}{4}\|f^{\lambda,\vep}(s)\|_{L^2}^2+\f{1}{\vep}\|g(s)\|_{L^2}^2\dd s.
\end{multline}
Note that due to the non-negativity of $L=\nu-K$,
$$
\langle \lambda Kf^{\lambda,\vep},f^{\lambda,\vep}\rangle\leq \lambda\|\nu^{1/2}f^{\lambda,\vep}\|_{L^2}^2,
$$
for any {$0\leq \lambda\leq 1$.} 
Then from \eqref{3.2.29}, we have
\begin{align}\label{3.2.30}
&\f{3\vep}{4}\int_0^T\|f^{\lambda,\vep}(s)\|_{L^2}^2\dd s+\f12\int_0^T|(I-P_{\g})f^{\lambda,\vep}(s)|_{L^2(\g_+)}^2\dd s\nonumber\\
&\leq \f{\eta}{2}\int_0^T\big|P_{\g}f^{\lambda,\vep}(s)\big|_{L^2({\g_+})}^2 +C_{\eta}\int_0^T|r(s)|_{L^{2}(\g_-)}^2\dd s+\f{1}{\vep}\int_0^T\|g(s)\|_{L^2}^2\dd s.
\end{align}
Here $\eta>0$ can be chosen to be arbitrarily small. Similar for obtaining \eqref{3.2.22}, 
we have that 
\begin{align}\label{3.2.31}
&\int_0^T|P_{\g}{f^{\lambda,\vep}}(s)|_{L^2(\g_+)}^2\dd s\nonumber\\
&\quad \leq C\int_0^T\|{e^{-\f{|v|^2}{16}}}f^{\lambda,\vep}(s)\|_{L^2}^2+|(I-P_\g )f^{\lambda,\vep}(s)|_{L^2(\g_+)}^2+\|{e^{-\f{|v|^2}{16}}}g(s)\|_{L^2}^2\dd s\notag\\
&\qquad+C\sup_{0\leq t\leq T}\|{e^{-\f{|v|^2}{16}}}f^{\lambda,\vep}(t)\|_{L^{\infty}}^2.
\end{align}
Substituting \eqref{3.2.31} into \eqref{3.2.30} gives that for any small constant $\eta>0$,
\begin{align}\label{3.2.32}
&\f{\vep}{2}\int_0^T\|f^{\lambda,\vep}(s)\|_{L^2}^2\dd s+\f14\int_0^T|(I-P_{\g})f^{\lambda,\vep}(s)|_{L^2(\g_+)}^2\dd s\nonumber\\
&\leq C\eta\sup_{0\leq t\leq T}\|{e^{-\f{|v|^2}{16}}}f^{\lambda,\vep}(t)\|_{L^{\infty}}^2+C_{\eta,\vep}\int_0^T|r(s)|_{L^{2}(\g_-)}^2\dd s+C_{\eta,\vep}\int_0^T\|g(s)\|_{L^2}^2\dd s\nonumber\\
&\leq C\eta\sup_{0\leq t\leq T}\|wf^{\lambda,\vep}(t)\|_{L^{\infty}}^2+C_{\eta,\vep}\sup_{0\leq t\leq T}\{\|\nu^{-1}wg(t)\|_{L^{\infty}}+|wr(t)|_{L^{\infty}(\g_-)}\}^2.
\end{align}
Applying the $L^{\infty}$ estimate \eqref{3.1.5} to $h^{\lambda,\vep}:=wf^{\lambda,\vep}$, we have
\begin{align}
&\sup_{0\leq t\leq T}\{\|h^{\lambda,\vep}(t)\|_{L^\infty}+|h^{\lambda,\vep}(t)|_{L^{\infty}{(\g)}}\}\nonumber \\
&\leq C\sup_{0\leq t\leq T}\{\|\nu^{-1}wg(t)\|_{L^\infty}+|wr(t)|_{L^\infty(\g_-)}\}+C\left\|f^{\lambda,\vep}\right\|_{L^2([0,T];L^2)}\nonumber\\
&\leq C{\eta^{1/2}}\sup_{0\leq t\leq T}\|h^{\lambda,\vep}(t)\|_{L^{\infty}}+{C_{\eta,\vep}}\sup_{0\leq t\leq T}\{\|\nu^{-1}wg(t)\|_{L^\infty}+|wr(t)|_{L^\infty(\g_-)}\},\nonumber
\end{align}
where we have used \eqref{3.2.32} in the second inequality. Letting $\eta>0$ be small enough, it then follows from the above estimate that
\begin{equation*}
\sup_{0\leq t\leq T}\{\|h^{\lambda,\vep}(t)\|_{L^\infty}+|h^{\lambda,\vep}(t)|_{L^{\infty}{(\g)}}\}
\leq C_{\vep}\sup_{0\leq t\leq T}\{\|\nu^{-1}wg(t)\|_{L^\infty}+|wr(t)|_{L^\infty(\g_-)}\}.
\end{equation*}
This shows \eqref{3.2.28} and then completes the proof of Lemma \ref{lem3.2.3}.
\end{proof}

\subsection{Solution to the linear inhomogeneous problem}

The last step is concerned with the limit procedure $\vep\rightarrow 0$.

\medskip
\noindent{\it Proof of Proposition \ref{prop3.2}:} Taking the inner product of \eqref{3.2.4} of $f^{\vep}$ over $[0,T]\times\Omega\times\mathbb{R}^3$, we get that for any $\eta>0$,
\begin{align}\label{3.2.34}
&\vep\int_0^T\|f^{\vep}(s)\|_{L^2}^2\dd s+\int_0^T\langle Lf^{\vep}(s), f^\vep(s)\rangle\dd s+
\f12\int_0^T|(I-P_{\g})f^{\vep}(s)|_{L^2(\g_+)}\dd s\notag\\
&\leq\eta \int_{0}^T\|\nu^{1/2}f^\vep(s)\|_{L^2}^2\dd s +\eta\int_0^T|P_{\g}f^{\vep}(s)|_{L^{2}(\g_+)}^2\dd s\notag\\
&\qquad +C_\eta\int_0^T\|\nu^{-1/2}g(s)\|_{L^2}^2\dd s+C_\eta\int_0^T|r(s)|_{L^{2}(\g_-)}^2\dd s.
\end{align}
By the coercivity estimate \eqref{c}, it holds that
$$\int_0^T\langle Lf^\vep(s), f^\vep(s)\rangle\dd s\geq c_0\int_0^T\|\nu^{1/2}(I-P)f^\vep(s)\|_{L^2}^2\dd s,
$$
where the projection $P$ is defined in \eqref{P}.
For the estimate on $P_{\g}f^{\vep}$, it is direct to see that
$$
(\pa_t+v\cdot \nabla_x)\big(e^{-\f{|v|^2}{4}}(f^{\vep})^2\big)=2e^{-\f{|v|^2}{4}}gf^{\vep}-2e^{-\f{|v|^2}{4}}f^{\vep}Lf^{\vep}
-2\vep e^{-\f{|v|^2}{4}}(f^{\vep})^2.
$$
Then it follows that
$$
\|(\pa_t+v\cdot \nabla_x)\big(e^{-\f{|v|^2}{4}}(f^{\vep})^2\big)\|_{L^1}\leq C\|{e^{-\f{|v|^2}{16}}}
g\|_{L^2}^2+C\|
{e^{-\f{|v|^2}{16}}}f^{\vep}\|_{L^2}^2.
$$
Thus, similar for obtaining \eqref{3.2.31}, it holds that
\begin{align}\label{3.2.35}
&\int_0^T|P_{\g}f^{\vep}(s)|^2_{L^2(\g_+)}\dd s\nonumber\\
&\quad \leq C\int_0^T\|{e^{-\f{|v|^2}{16}}}f^{\vep}(s)\|_{L^2}^2+|(I-P_\g )f^{\vep}(s)|_{L^2(\g_+)}^2+\|{e^{-\f{|v|^2}{16}}}g(s)\|_{L^2}^2\dd s\notag\\
&\qquad+C\sup_{0\leq t\leq T}\|{e^{-\f{|v|^2}{16}}}f^{\vep}(t)\|_{L^{\infty}}^2.
\end{align}
For the macroscopic part $Pf^{\vep}$, we note that $f^{\vep}$ satisfies the zero-mass condition \eqref{3.2.5}. Then from {\cite[Lemma 6.1]{EGKM}} 
 there exists a functional $G_{f^{\vep}}(t)$ with the property $|G_{f^{\vep}}(t)|{\lesssim\|f^\vep(t)\|_{L^2}^2}$ such that
\begin{align}\label{3.2.36}
&\int_0^t\|\nu^{1/2}Pf^{\vep}(s)\|_{L^2}^2\notag \\
&\lesssim\bigg(G_{f^{\vep}}(t)-G_{f^{\vep}}(0)\bigg)+\int_0^t\|\nu^{1/2}(I-P)f^{\vep}(s)\|_{L^2}^2\dd s\nonumber\\
&\qquad +\int_0^t\|g(s)\|_{L^2}^2\dd s
+\int_0^t|r(s)|^2_{L^2(\g_-)}\dd s+\int_0^t|(I-P_{\g})f^{{\vep}}(s)|_{L^2(\g_+)}^2\dd s.
\end{align}
In particular, taking $t=T$ in \eqref{3.2.36} and utilizing the periodicity of $f^{\vep}$, we get
\begin{align}\label{3.2.37}
&\int_0^T\|\nu^{1/2}Pf^{\vep}(s)\|_{L^2}^2\notag\\
&\leq C\int_0^T\|\nu^{1/2}(I-P)f^{\vep}(s)\|_{L^2}^2\dd s+C\int_0^T\|g(s)\|_{L^2}^2\dd s\nonumber\\
&\qquad +C\int_0^T|r(s)|_{L^2(\g_-)}^2\dd s+C\int_0^T|(I-P_{\g})f^{{\vep}}(s)|_{L^2(\g_+)}^2\dd s.
\end{align}
A suitable combination of \eqref{3.2.34}, \eqref{3.2.35} and \eqref{3.2.37} yields that
\begin{align}\label{3.2.38}
&\int_0^T\|\nu^{1/2}f^{\vep}(s)\|_{L^2}^2+|f^{\vep}(s)|_{L^2(\g_+)}^2\dd s\nonumber\\ &\leq \eta\sup_{0\leq t\leq T}\|{e^{-\f{|v|^2}{16}}}f^{\vep}(t)\|_{L^{\infty}}^2 +C_{\eta}\int_0^T\|\nu^{-1/2}g(s)\|_{L^2}^2+\|g(s)\|_{L^2}^2+|r(s)|_{L^{2}(\g_-)}^2\dd s\nonumber\\
&\leq\eta\sup_{0\leq t\leq T}\|{e^{-\f{|v|^2}{16}}}f^{\vep}(t)\|_{L^{\infty}}^2+C_{\eta}\sup_{0\leq t\leq T}\{\|\nu^{-1}wg(t)\|_{L^\infty}+|wr(t)|_{L^\infty(\g_-)}\}^2,
\end{align}
where $\eta>0$ can be chosen to be arbitrarily small. Moreover, in terms of the $L^{\infty}$ estimate \eqref{3.1.5}, it holds that
\begin{align}\label{3.2.39}
&\sup_{0\leq t\leq T}\{\|wf^{\vep}(t)\|_{L^{\infty}}+|wf^{\vep}(t)|_{L^{\infty}({\g})}\}\nonumber\\
&\leq C\sup_{0\leq t\leq T}\{\|\nu^{-1}wg(t)\|_{L^\infty}+|wr(t)|_{L^\infty(\g_-)}\}+C\|\nu^{1/2}f^{\vep}\|_{L^2([0,T];L^2)}\nonumber\\
&\leq C{\eta^{1/2}}\sup_{0\leq t\leq T}\|wf^{\vep}(t)\|_{L^{\infty}}+C_{\eta}\sup_{0\leq t\leq T}\{\|\nu^{-1}wg(t)\|_{L^\infty}+|wr(t)|_{L^\infty(\g_-)}\},
\end{align}
where we have used \eqref{3.2.38} in the last inequality. Then taking $\eta>0$ suitably small in \eqref{3.2.39}, we get the desired estimate.

To pass to the limit $\vep\rightarrow0^+$, we consider the difference
$
f^{\vep_1}-f^{\vep_2}$ with 
$0<\vep_1, \vep_2\ll 1.
$
We see that $f^{\vep_1}-f^{\vep_2}$ solves the problem:
\begin{equation}\left\{
\begin{aligned}
&\pa_t(f^{\vep_1}-f^{\vep_2})+v\cdot \nabla_x (f^{\vep_1}-f^{\vep_2})+L(f^{\vep_1}-f^{\vep_2})=\vep_2f^{\vep_2}-\vep_1f^{\vep_1},\\
&f^{\vep_1}-f^{\vep_2}|_{\g_-}=P_{\g}(f^{\vep_1}-f^{\vep_2}).\nonumber
\end{aligned}\right.
\end{equation}
Similar as before, direct energy estimates show that
\begin{align*}
&\int_0^T\|\nu^{1/2}(f^{\vep_1}-f^{\vep_2})(s)\|_{L^2}^2+|(f^{\vep_1}-f^{\vep_2})(s)|_{L^2(\g_+)}^2\dd s\nonumber\\ &\leq \eta\sup_{0\leq t\leq T}\|{e^{-\f{|v|^2}{16}}}(f^{\vep_1}-f^{\vep_2})(t)\|_{L^{\infty}}^2+C_{\eta}(\vep_1^2+\vep_2^{2})\int_0^T\|\nu^{-1/2}f^{\vep_1}(s)\|_{L^2}^2\notag\\
&\qquad\qquad\qquad\qquad\qquad\qquad +\|\nu^{-1/2}f^{\vep_2}(s)\|_{L^2}^2
+\|f^{\vep_1}(s)\|_{L^2}^2+\|f^{\vep_2}(s)\|_{L^2}^2\dd s\nonumber\\
&\leq\eta\sup_{0\leq t\leq T}\|{e^{-\f{|v|^2}{16}}}(f^{\vep_1}-f^{\vep_2})(t)\|_{L^{\infty}}^2\notag\\
&\quad +C_{\eta}(\vep_1^2+\vep_2^2)\sup_{0\leq t\leq T}\{\|wf^{\vep_1}(t)\|_{L^\infty}+\|wf^{\vep_2}(t)\|_{L^\infty}\}^2\nonumber\\
&\leq \eta\sup_{0\leq t\leq T}\|{e^{-\f{|v|^2}{16}}}(f^{\vep_1}-f^{\vep_2})(t)\|_{L^{\infty}}^2\notag\\
&\quad +C_{\eta}(\vep_1^2+\vep_2^2)\sup_{0\leq t\leq T}\{\|\nu^{-1}wg(t)\|_{L^\infty}+|wr(t)|_{L^\infty(\g_-)}\}^2.
\end{align*}
Then applying the $L^{\infty}$ estimate \eqref{3.1.5} to $h^{\vep_1}-h^{\vep_2}:=w(f^{\vep_1}-f^{\vep_2})$, we get that in the case of $0\leq\gamma\leq 1$,
\begin{align}\label{3.2.41}
&\sup_{0\leq t\leq T}\|(h^{\vep_1}-h^{\vep_2})(t)\|_{L^{\infty}}+\sup_{0\leq t\leq T}|(h^{\vep_1}-h^{\vep_2})(t)|_{L^{\infty}({\g})}\nonumber\\
&\leq C(\vep_1+\vep_2)\sup_{0\leq t\leq T}\big\{\|\nu^{-1}h^{\vep_1}(t)\|_{L^{\infty}}+\|\nu^{-1}h^{\vep_2}(t)\|_{L^{\infty}}\big\}\notag\\
&\quad +C\|\nu^{1/2}(f^{\vep_1}-f^{\vep_2})\|_{L^2([0,T];L^2)}\nonumber\\
&\leq C\eta\sup_{0\leq t\leq T}\|(h^{{\vep_1}}-h^{{\vep_2}})(t)\|_{L^{\infty}}\notag\\
&\quad +C_{\eta}(\vep_1+\vep_2)\sup_{0\leq t\leq T}\{\|\nu^{-1}wg(t)\|_{L^\infty}+|wr(t)|_{L^\infty(\g_-)}\},\nonumber\\
&\leq C(\vep_1+\vep_2)\sup_{0\leq t\leq T}\{\|\nu^{-1}wg(t)\|_{L^\infty}+|wr(t)|_{L^\infty(\g_-)}\},
\end{align}
and in the case of $-3<\gamma<0$,
\begin{align}\label{3.2.42}
&\sup_{0\leq t\leq T}\|\nu(h^{\vep_1}-h^{\vep_2})(t)\|_{L^{\infty}}+\sup_{0\leq t\leq T}|\nu(h^{\vep_1}-h^{\vep_2})(t)|_{L^{\infty}{(\g)}}\nonumber\\
&\leq C(\vep_1+\vep_2)\sup_{0\leq t\leq T}\big\{\|h^{\vep_1}(t)\|_{L^{\infty}}+\|h^{\vep_2}(t)\|_{L^{\infty}}\big\}+C\|\nu^{1/2}(f^{\vep_1}-f^{\vep_2})\|_{L^2([0,T];L^2)}\nonumber\\
&\leq C{\eta^{1/2}}\sup_{0\leq t\leq T}\|{e^{-\f{|v|^2}{16}}}(h^{{\vep_1}}-h^{{\vep_2}})(t)\|_{L^{\infty}}\notag\\&\quad +C_{\eta}(\vep_1+\vep_2)\sup_{0\leq t\leq T}\{\|\nu^{-1}wg(t)\|_{L^\infty}+|wr(t)|_{L^\infty(\g_-)}\}\nonumber\\
&\leq C(\vep_1+\vep_2)\sup_{0\leq t\leq T}\{\|\nu^{-1}wg(t)\|_{L^\infty}+|wr(t)|_{L^\infty(\g_-)}\},
\end{align}
{by taking $\eta>0$ suitably small. }Therefore, from \eqref{3.2.41} and \eqref{3.2.42} we have respectively shown  that $f^{\vep}$ is Cauchy in $L^{\infty}_{w}$ for $0\leq \gamma\leq 1$, and Cauchy in $L^{\infty}_{\nu w}$ for $-3<\gamma<0$. Let  $f(t,x,v)$ be the limit function of $f^{\vep}(t,x,v)$ in the corresponding function space. It is direct to check that  $f(t,x,v)$  satisfies \eqref{3.0.1}. Finally, the time-periodicity and continuity of $f$ directly follow from the time-periodicity and continuity of $f^{\vep}$. The proof of Proposition \ref{prop3.2} is therefore complete. \qed

\subsection{Proof of Theorem \ref{thm1.1}}
We consider the solution sequence $\{f^j(t,x,v)\}$ iteratively solved from
\begin{align*}
\begin{cases}
\pa_t f^{j+1}+v\cdot \nabla_x f^{j+1}+Lf^{j+1}={-}L_{\sqrt{\mu}
{f^*}}f^j+\Gamma(f^j,f^j),\\[1.5mm]
f^{j+1}|_{\gamma_-}=P_\gamma f^{j+1}+\frac{\mu_{\theta}-
\mu}{\sqrt{\mu}}\int_{v'\cdot n(x)>0}f^j\sqrt{\mu} \{v'\cdot n(x)\} \dd v' +r,
\end{cases}
\end{align*}
for $j=0,1,2\cdots$, where  we have set $f^0\equiv0$. Here we have denoted
$$
r(t,x,v)=\frac{\mu_{\theta}-\mu_{\b{\theta}}}{\sqrt{\mu}}\int_{v'\cdot n(x)>0} {F^{*}(x,v')}
 \{v'\cdot n(x)\} \dd v',
$$
and
$$
{L_{\sqrt{\mu}f^{*}}f^j=-\f{1}{\sqrt{\mu}}[Q(\sqrt{\mu}f^*,\sqrt{\mu}f^j)+Q(\sqrt{\mu}f^j,\sqrt{\mu}f^*)].}
$$
A direct calculation shows that
\begin{align}\label{3.3.2}
\int_{\Omega\times\mathbb{R}^3} \Gamma(f^j,f^j)\sqrt{\mu(v)} \dd v\dd x=\int_{\Omega\times\mathbb{R}^3}
L_{\sqrt{\mu}{f^*}
}f^j\sqrt{\mu(v)} \dd v\dd x=0,
\end{align}
and
\begin{equation}\label{3.3.3}
\int_{v\cdot n(x)<0} [\mu_\theta(v)-
\mu(v)] \{v\cdot n(x)\} \dd v =\int_{v\cdot n(x)<0} [\mu_{\theta}(v)-\mu_{\b{\theta}}(v)] \{v\cdot n(x)\} \dd v=0.
\end{equation}
Furthermore, one can verify that
\begin{align}\label{3.3.4}
\|\nu^{-1}wL_{\sqrt{\mu}{f^*}}
f^j\|_{L^{\infty}}+\|\nu^{-1} w \Gamma(f^j,f^j)\|_{L^\infty}\leq C \delta\|wf^j\|_{L^\infty}+C\|wf^j\|^2_{L^\infty},
\end{align}
and
\begin{equation}\label{3.3.5}
\left| w\bigg\{r +\frac{\mu_\theta-
\mu}{\sqrt{\mu}}\int_{v'\cdot n(x)>0} f^j\sqrt{\mu} \{v'\cdot n(x)\} \dd v'\bigg\} \right|_{L^\infty{(\g_-)}}
\leq C\delta_1+C\delta |f^j|_{L^\infty(\g_+)}.
\end{equation}
Recall \eqref{3.3.2}, \eqref{3.3.3}, \eqref{3.3.4} and \eqref{3.3.5}. Then, by applying \eqref{3.0.4} to $f^{j+1}$, we get
\begin{multline}\label{3.3.6}
\sup_{0\leq s\leq T}\{\|wf^{j+1}(s)\|_{L^\infty}+|wf^{j+1}(s)|_{L^\infty{(\g)}}\}\\
\leq C\delta_1+C\sup_{0\leq s\leq T}\big\{\|f^j(s)\|_{L^\infty}^2+\delta\|wf^j(s)\|_{L^\infty}+\delta|wf^j(s)|_{L^\infty(\g_+)}\big\}.
\end{multline}
From \eqref{3.3.6}, it is direct to prove by an induction argument that
\begin{equation}\label{3.3.7}
\sup_{0\leq s\leq T}\|wf^{j}(s)\|_{L^\infty}+\sup_{0\leq s\leq T}|wf^{j}(s)|_{L^\infty{(\g)}}\leq 2C\delta_1,
\end{equation}
for $j=1,2,\cdots$, provided that $\delta>0$ is suitably small, where $C$ is a generic constant independent of $j$. For the convergence of the approximation sequence  $f^j$, we consider the difference $f^{j+1}-f^j$ which satisfies
\begin{multline}
\pa_t(f^{j+1}-f^j)+v\cdot \nabla_x (f^{j+1}-f^j)+L(f^{j+1}-f^j)\nonumber\\
={-}L_{\sqrt{\mu}{f^*}
}(f^j-f^{j-1})+\Gamma(f^{j}-f^{j-1},f^{j})+\Gamma(f^{j-1},f^{j}-f^{j-1}),\nonumber
\end{multline}
with the boundary condition
\begin{align}
(f^{j+1}-f^j)|_{\gamma_-}&=P_\gamma (f^{j+1}-f^j)\notag\\
&\quad+\frac{\mu_\theta-\mu}{\sqrt{\mu}}\int_{v'\cdot n(x)>0} (f^j-f^{j-1})\sqrt{\mu} \{v'\cdot n(x)\} \dd v'.\nonumber
\end{align}
Once again, applying \eqref{3.0.4} to $f^{j+1}-f^j$ gives that
\begin{align}\label{3.3.8}
&\sup_{0\leq s\leq T}\big\{\|w(f^{j+1}-f^j)(s)\|_{L^\infty}+|w(f^{j+1}-f^j)(s)|_{L^\infty{(\g)}}\big\}\nonumber\\
&\leq C\bigg(\delta+\sup_{0\leq s\leq T}\big\{\|wf^{j}(s)\|_{L^\infty}+\|wf^{j-1}(s)\|_{L^\infty}\}\bigg)\nonumber\\
&\quad\times \sup_{0\leq s\leq T}\big\{\|w(f^{j}-f^{j-1})(s)\|_{L^\infty}+|w(f^j-f^{j-1})(s)|_{L^\infty(\gamma_+)}\Big\}\nonumber\\
&\leq C\delta \sup_{0\leq s\leq T}\big\{\|w(f^{j}-f^{j-1})(s)\|_{L^\infty}+|w(f^j-f^{j-1})(s)|_{L^\infty(\g_+)}\big\}\nonumber\\
&\leq \f12\sup_{0\leq s \leq T}\big\{\|w(f^{j}-f^{j-1})(s)\|_{L^\infty}+|w(f^j-f^{j-1})(s)|_{L^\infty(\g_+)}\big\},
\end{align}
where we have used \eqref{3.3.7} in the second inequality and also we have taken $\delta>0$ small enough such that $C\delta\leq 
1/2$. Hence, $f^j(t,x,v)$ is a Cauchy sequence in $L^\infty_w$. Let
$
f^{{per}}(t,x,v)=\lim_{j\rightarrow\infty} f^j(t,x,v)
$
in $L^\infty_w$.
It is direct to check that
$$
F^{per}(t,x,v)=\mu+\sqrt{\mu} f^{{per}}(t,x,v)
$$
is the time-periodic solution to the boundary-value problem \eqref{1.1} and \eqref{1.7}, and also \eqref{add.thmcon} and \eqref{1.5} are satisfied. The proof of \eqref{1.4} for the non-negativity of  $F^{per}(t,x,v)$ is left to the next section. The uniqueness and  continuity of $f^{{per}}(t,x,v)$ can be obtained in a usual way, cf.~\cite{DHWZ}. Therefore this completes  the proof of Theorem \ref{thm1.1}. \qed

\section{Asymptotical stability}

This section is concerned with the large-time behavior of solutions to the initial-boundary value problem \eqref{ibvp} whenever $F_0(x,v)$ is sufficiently close to $F^{per}(0,x,v)$ at initial time. As a byproduct, the result about the dynamical stability of the non-trivial time-periodic profile $F^{per}(t,x,v)$ in turn yields its non-negativity.

As for obtaining the existence of the time-periodic solution $F^{per}(t,x,v)$, we need to first study the linear inhomogeneous problem in the following Proposition \ref{prop4.1}. As its proof is is more or less the same as the one of {\cite[Proposition 7.1]{EGKM}} 
 for $0\leq \gamma\leq 1$ and \cite[Proposition 4.4]{DHWZ} 
for $-3<\gamma<0$. The full details are omitted for brevity.

\begin{proposition}\label{prop4.1}
Let $-3<\gamma\leq 1, 0
{<} q<\f18$ and $\beta>\max\{3,3-\gamma\}$. Let
$$
\|wf_0\|_{L^{\infty}}+\|\nu^{-1}wg\|_{L^{\infty}}<\infty,
$$
and
\begin{align*}
\int_{\Omega}\int_{\mathbb{R}^3} f_0(x,v)\sqrt{\mu(v)}\,\dd x\dd v=\int_{\Omega} \int_{\mathbb{R}^3} g(t,x,v)\sqrt{\mu(v)}\,{\dd x\dd v}=0.
\end{align*}
Then if
$$
\sup_{0\leq t\leq T}|\theta(t,\cdot)-1|_{L^\infty(\pa\Omega)}
$$ is sufficiently small, the linear inhomogeneous initial-boundary value problem:
\begin{equation}\left\{
\begin{aligned}
&\pa_tf+ v\cdot \nabla_xf+Lf=g,\quad t>0,x\in\Omega, v\in \mathbb{R}^3,\\
&f(t,x,v)|_{\gamma_{-}}=P_{\gamma}f+{\f{\mu_{\theta}-{\mu}}{\sqrt{\mu}}\int_{v'\cdot n(x)>0}f\sqrt{\mu}\{n(x)\cdot v'\}\,\dd v'},\nonumber\\
&f(t,x,v)|_{t=0}=f_0(x,v),
\end{aligned}\right.
\end{equation}
admits a unique solution $f(t,x,v)$ satisfying that
\begin{multline}\label{4.2}
\sup_{0\leq s\leq t}e^{c {s}^\rho}\{\|wf(t)\|_{L^{\infty}}+|wf(t)|_{L^{\infty}{(\g)}}\}\\
{\leq  C\|wf_0\|_{L^{\infty}}+C\sup_{0\leq s\leq t}e^{c s^\rho}\|\nu^{-1}wg(s)\|_{L^{\infty}}},
\end{multline}
for any $t>0$, where $\rho$ is defined in \eqref{def.rho}, and $c>0$ is a generic small constant.
Moreover, if $\Omega$ is convex, $f_0(x,v)$ is continuous except on $\g_0$, $g$ is continuous in the interior of $[0,\infty)\times\Omega\times \mathbb{R}^3$,
\begin{align*}
f_0(x,v)|_{\gamma_-}=P_{\g}f_0+{\f{\mu_{\theta}-{\mu}}{\sqrt{\mu}}\int_{v'\cdot n(x)>0}f_0\sqrt{\mu}\{n(x)\cdot v'\}\dd v'},
\end{align*}
and $\theta(t,x) $ is continuous over $\mathbb{R}\times\partial \Omega$, then the solution $f(t,x,v)$ is also continuous over $[0,\infty)\times \{\bar{\Omega}\times \mathbb{R}^{3}\setminus\g_0\}$.
\end{proposition}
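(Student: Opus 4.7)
My plan is to follow the strategy developed for the stationary setting in \cite{EGKM} (hard potentials) and \cite{DHWZ} (soft potentials), adapting it to the fact that here we have an initial-value problem with a non-standard boundary inhomogeneity of the form $\frac{\mu_\theta-\mu}{\sqrt{\mu}}\int f\sqrt{\mu}\{n\cdot v'\}\,\dd v'$, which couples back to $f$ but is small whenever $|\theta-1|_{L^\infty(\pa\Omega)}$ is small. The argument has three main components: propagation of the zero-mass condition, an $L^2$ time-weighted decay estimate, and a bootstrap to the weighted $L^\infty$ decay in \eqref{4.2}.

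First I would verify that the zero-mass condition is propagated in time. Testing the equation against $\sqrt{\mu}$ and integrating over $\Omega\times\R^3$, the transport term reduces to a boundary integral. The contribution from $P_\gamma f$ vanishes because $P_\gamma$ conserves mass pointwise on $\pa\Omega$, while the extra boundary term vanishes thanks to the analog of \eqref{3.3.3}, namely $\int_{v\cdot n<0}[\mu_\theta-\mu]\{v\cdot n\}\,\dd v=0$. Combined with $\int g\sqrt{\mu}\,\dd x\dd v=0$ and the initial mass neutrality of $f_0$, this gives $\int f(t)\sqrt{\mu}\,\dd x\dd v\equiv 0$, which is needed to invoke the macroscopic estimate of \cite[Lemma~6.1]{EGKM} on the $Pf$ component.

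Next I would derive an $L^2$ energy inequality. Taking the inner product of the equation with $f$ yields the dissipation $\langle Lf,f\rangle\geq c_0\|\nu^{1/2}(I-P)f\|_{L^2}^2$ and a boundary dissipation $\frac12|(I-P_\gamma)f|_{L^2(\gamma_+)}^2$, at the cost of a boundary term of order $|\mu_\theta-\mu|_{L^\infty}\,|P_\gamma f|_{L^2(\gamma_+)}^2$ which is absorbed by the smallness of $|\theta-1|_{L^\infty(\pa\Omega)}$. Combining with \cite[Lemma~6.1]{EGKM} to control $Pf$ and with the trace estimate \eqref{3.2.35}-type bound for $P_\gamma f$, one produces an inequality of the form $\frac{\dd}{\dd t}\mathcal{E}(t)+c\mathcal{D}(t)\leq C\|\nu^{-1}wg\|_{L^\infty}^2$. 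For hard potentials $\gamma\geq 0$ the dissipation controls the energy itself, giving exponential $L^2$ decay ($\rho=1$). For soft potentials, following \cite{DHWZ}, the dissipation only controls $\nu^{1/2}f$ and one splits the energy into low/high velocities, interpolating the high-velocity part against the weighted $L^\infty$ norm (which is propagated a priori by \eqref{3.1.5}) to obtain the stretched-exponential rate $e^{-ct^\rho}$ with $\rho=\frac{2}{2+|\gamma|}$.

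Finally I would bootstrap from $L^2$ to $L^\infty$ via the mild formulation \eqref{3.1.2} iterated once in the $K^c$ term, exactly as in Proposition~\ref{prop3.1}, but now with initial data at $t=0$ in place of the periodicity identification. The initial contribution yields the $C\|wf_0\|_{L^\infty}$ term in \eqref{4.2} after being multiplied by $e^{-\nu_0 t}$ (hard case) or $e^{-ct^\rho}$ (soft case); the source term contributes $C\sup_{s\leq t}e^{cs^\rho}\|\nu^{-1}wg(s)\|_{L^\infty}$; and the inner $L^2$ norm $\|h/\langle v\rangle^{|\gamma|}w\|_{L^2([0,t];L^2)}$ is controlled with the matching time-weight by Step~2. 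The boundary inhomogeneity $\frac{\mu_\theta-\mu}{\sqrt{\mu}}\int f\sqrt{\mu}\{n\cdot v'\}\,\dd v'$, being a small factor times $P_\gamma f$, is absorbed into the left-hand side by the smallness of $|\theta-1|_{L^\infty}$. The continuity assertion follows in the convex case from the smoothness of $(t_\mathbf{b},x_\mathbf{b})$ away from $\gamma_0$ and the continuity of the iterates, as in \cite{Guo2,EGKM}. The main obstacle will be the soft-potential step: producing the sharp time weight $e^{cs^\rho}$ inside the double-Duhamel expansion requires the Young-type bound $\nu(v)(\tau_1-\tau_2)+|v|^2/16\geq c(\tau_1-\tau_2)^\alpha$ used in \eqref{3.1.12}, together with a careful splitting in $|v|$, so that the weight can be moved through the $K^c$ kernel without losing the decay rate.
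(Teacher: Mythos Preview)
Your proposal is correct and follows essentially the same approach as the paper, which in fact omits the details entirely and simply refers to \cite[Proposition~7.1]{EGKM} for $0\leq\gamma\leq 1$ and \cite[Proposition~4.4]{DHWZ} for $-3<\gamma<0$. Your three-step outline (zero-mass propagation, time-weighted $L^2$ decay via the coercivity of $L$ together with the macroscopic estimate and trace control, then bootstrap to weighted $L^\infty$ through the double-Duhamel expansion with absorption of the small boundary term) is exactly the scheme underlying those references, so there is nothing to add.
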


\noindent{\it Proof of Theorem \ref{thm1.2}:} We construct the solution via the following iteration:
\begin{align*}
\begin{cases}
\dis \pa_t f^{j+1}+v\cdot \nabla_x f^{j+1}+Lf^{j+1}={-}L_{\sqrt{\mu}f^{per}}f^j+\Gamma(f^j,f^j),\\
\dis f^{j+1}|_{\gamma_-}=P_\gamma f^{j+1}+\frac{\mu_{\theta}-
\mu}{\sqrt{\mu}}\int_{v'\cdot n(x)>0}f^{{j+1}}\sqrt{\mu} \{v'\cdot n(x)\} \dd v',\\
f^{j+1}(0,x,v)=f_0(x,v),
\end{cases}
\end{align*}
for $j=0,1,2\cdots$, where we have set $f^0\equiv0$, and also 
$$
L_{\sqrt{\mu}f^{{per}}}f^j:={-\f{1}{\sqrt{\mu}}[Q(\sqrt{\mu}f^{per},\sqrt{\mu}f)+Q(\sqrt{\mu}f,\sqrt{\mu}f^{per})]}.
$$
Similar for obtaining estimates \eqref{3.3.2}-\eqref{3.3.5}, we have
\begin{align}
&\int_{\Omega\times\mathbb{R}^3} \Gamma(f^j,f^j)\sqrt{\mu(v)} \dd v\dd x=\int_{\Omega\times\mathbb{R}^3}
L_{\sqrt{\mu}f^{per}}f^j\sqrt{\mu(v)} \dd v\dd x=0,\nonumber
\end{align}
{and
\begin{align}
\|\nu^{-1}w[L_{\sqrt{\mu}f^{per}}f^j\|_{L^\infty}+\|\nu^{-1}w\Gamma(f^j,f^j)]\|_{L^{\infty}}\leq C\delta'\|wf^j\|_{L^{\infty}}+C\|wf^j\|^2_{L^{\infty}}.\nonumber
\end{align}}
Then we can apply the linear time-decay property \eqref{4.2} to $f^{j+1}$ to obtain that
\begin{align}\label{4.5}
&\sup_{0\leq s\leq t}e^{c s^{\rho}}\{\|wf^{j+1}(s)\|_{L^{\infty}}+|wf^{j+1}(s)|_{L^{\infty}(\g)}\}\nonumber\\
&\leq C \|wf_0\|_{L^{\infty}}+C\delta'\sup_{0\leq s\leq t}e^{c s^\rho}\|wf^j(s)\|_{L^{\infty}}+C\sup_{0\leq s\leq t}e^{c s^{\rho}}\|wf^j(s)\|_{L^{\infty}}^2.
\end{align}
From \eqref{4.5}, we can also use the induction argument to show that
$$
\sup_{0\leq s\leq t}e^{c s^{\rho}}\{\|wf^{j+1}(s)\|_{L^{\infty}}+|wf^{j+1}(s)|_{L^{\infty}(\g)}\}\leq 2C \|wf_0\|_{L^{\infty}},
$$
provided that both $\delta'>0$ and $\|wf_0\|_{L^{\infty}}$ are suitably small. Similar to obtain \eqref{3.3.8}, one can show that $\{f^j\}_{j=1}^{\infty}$ is a Cauchy sequence {in $L^\infty_w$}, then we obtain the solution $f(t,x,v)$ as the limit of $f^j(t,x,v)$.
The uniqueness and continuity is standard, and the positivity can be shown by the same method as in \cite{EGKM}. Therefore, we complete the proof of Theorem \ref{thm1.2}.\qed

\medskip
\noindent{\bf Acknowledgments.}  Renjun Duan is partially supported by the General Research Fund (Project No.~14302817). Yong Wang is partly supported by NSFC Grant No.~11771429, 11688101, and 11671237.


\end{document}